\documentclass[11pt,hyp,]{nyjm}
\usepackage{tikz}
\usetikzlibrary{matrix,arrows,decorations.pathmorphing}



 





\usepackage{hyperref} 
\hypersetup{nesting=true,debug=true,naturalnames=true}
\usepackage{xcolor}
\usepackage{amsmath,amsthm}
\usepackage{tikz-cd}
\usepackage{graphicx}
\usepackage[mathscr]{eucal}
\usepackage{amssymb}
\usepackage[frame,ps,matrix,arrow,curve,rotate,all,2cell,tips]{xy}
\usepackage{enumerate}
\usepackage{bbold}

\setcounter{tocdepth}{1}

\newtheorem{theorem}[subsection]{Theorem}
\newtheorem*{unnumthm}{Theorem}
\newtheorem{lemma}[subsection]{Lemma}

\newtheorem{proposition}[subsection]{Proposition}
\newtheorem{corollary}[subsection]{Corollary}


\theoremstyle{definition}
\newtheorem{definition}[subsection]{Definition}
\newtheorem{example}[subsection]{Example}
\newtheorem{remark}[subsection]{Remark}

\newtheorem{convention}[subsection]{Convention}
\newtheorem{notation}[subsection]{Notation}

\numberwithin{equation}{subsection}



\usepackage{tikz}
\usetikzlibrary{arrows,decorations.pathmorphing}
\usetikzlibrary{backgrounds,positioning}
\usetikzlibrary{fit,petri,shapes.misc}

\tikzset{auto}

\tikzset{empty/.style={circle,inner sep=0pt,minimum size=6mm}}
\tikzset{emptyvt/.style={circle,inner sep=0pt,minimum size=0mm}}

\tikzset{plain/.style={circle,draw,very thick,
inner sep=0pt,minimum size=6mm}}

\tikzset{fatplain/.style={rounded rectangle,draw,very thick,minimum size=6mm}}

\tikzset{bigplain/.style={rounded rectangle,draw,very thick,minimum size=.8cm}}

\tikzset{yellowvt/.style={circle,draw,fill=yellow,very thick,inner sep=0pt,minimum size=6mm}}

\tikzset{bluevt/.style={circle,draw,fill=blue!20,very thick,inner sep=0pt,minimum size=6mm}}

\tikzset{greenvt/.style={circle,draw,fill=green!30,very thick,inner sep=0pt,minimum size=6mm}}

\tikzset{redvt/.style={circle,draw,fill=red!30,very thick,inner sep=0pt,minimum size=6mm}}

\tikzset{arrow/.style={->,thick}}
\tikzset{dashedarrow/.style={->,dashed,thick}}
\tikzset{dottedarrow/.style={->,dotted,thick}}
\tikzset{mapto/.style={|->,thick}}

\tikzset{implies/.style={thick,double,double equal sign distance,-implies}}

\tikzset{line/.style={thick}}
\tikzset{dottedline/.style={dotted,thick}}
\tikzset{dashedline/.style={dashed,thick}}

\tikzset{inputleg/.style={<-,thick}}
\tikzset{outputleg/.style={->,thick}}
\tikzset{dottedinput/.style={<-,dotted,thick}}



\newcommand{\adjoint}{\rightleftarrows}
\newcommand{\nicearrow}{\SelectTips{cm}{10}}
\newcommand{\nicexy}{\nicearrow\xymatrix@C+5pt}

\newcommand{\boxprod}{\mathbin\square}
\newcommand{\comp}{\circ}
\newcommand{\defn}{\overset{\mathrm{def}}{=\joinrel=}}

\newcommand{\dotover}[1]{\underset{#1}{\centerdot}}



\newcommand{\Gr}{\mathtt{Gr}}
\newcommand{\gup}{\Gr^{{\scalebox{.5}{$\uparrow$}}}}

\newcommand{\gupc}{\gup_{\mathrm{c}}}
\newcommand{\gupd}{\gup_{\mathrm{di}}}

\newcommand{\wheel}{{\scalebox{.6}{$\circlearrowright$}}}

\newcommand{\gwheelc}{\Gr^\wheel_{\mathrm{c}}}

\newcommand{\tg}{\mathtt{G}}
\newcommand{\graphgpd}{\cg = (S,\tg)}

\newcommand{\ULin}{\mathtt{ULin}}

\newcommand{\UTree}{\mathtt{UTree}}
\newcommand{\Treew}{\mathtt{Tree}^\wheel}
\newcommand{\Treewheel}{\Treew}

\newcommand{\frakC}{\mathfrak{C}}
\newcommand{\fC}{\mathfrak{C}}

\newcommand{\sds}{\mathsf{ds}}

\newcommand{\Edge}{\mathsf{Edge}}

\newcommand{\sI}{\mathsf{I}}
\newcommand{\sJ}{\mathsf{J}}

\newcommand{\tensorunit}{\mathbb{1}}

\newcommand{\sn}{\mathsf{n}}
\newcommand{\ang}{A\bigl(\sn(G)\bigr)}
\newcommand{\lang}{(LA)\bigl(\sn(G)\bigr)}
\newcommand{\bng}{B\bigl(\sn(G)\bigr)}

\newcommand{\sO}{\mathsf{O}}

\newcommand{\opg}{\mathsf{Op}^{\cG}}

\newcommand{\ugbar}{\overline{U}_{\cG}}

\newcommand{\Lg}{L^{\cG}}

\newcommand{\Vt}{\mathsf{Vt}}

\newcommand{\cG}{\mathcal{G}}
\newcommand{\cg}{\mathcal{G}}
\newcommand{\sigmag}{\Sigma_{\cG}}

\newcommand{\ua}{\underline{a}}

\newcommand{\uc}{\underline{c}}

\newcommand{\ud}{\underline{d}}

\newcommand{\ur}{\underline{r}}

\newcommand{\smallop}{{\scalebox{.5}{$\mathrm{op}$}}}

\newcommand{\calm}{\mathcal{M}}
\newcommand{\M}{\calm}
\newcommand{\calmc}{\calm^{\fC}}

\newcommand{\Mtor}{\M^{\sigmabrr}}
\newcommand{\Mtos}{\M^{S}}

\newcommand{\caln}{\mathcal{N}}
\newcommand{\N}{\caln}

\newcommand{\Ntor}{\N^{\sigmabrr}}
\newcommand{\Ntos}{\N^{S}}

\newcommand{\sset}{\mathsf{sSet}}

\newcommand{\alg}{\mathsf{Alg}}

\renewcommand{\prop}{\mathsf{Prop}}

\newcommand{\gprop}{\prop^{\cG}}
\newcommand{\gpropm}{\gprop_{\M}}
\newcommand{\gpropn}{\gprop_{\N}}

\newcommand{\pofc}{\Sigma_{\frakC}}
\newcommand{\pofcop}
{\pofc^{\scalebox{.6}{$\mathrm{op}$}}}

\newcommand{\smallprof}[1]
{\raisebox{.05cm}{\scalebox{0.8}{#1}}}

\newcommand{\smallbinom}[2]
{\raisebox{.05cm}{\scalebox{0.8}{$\binom{#1}{#2}$}}}

\newcommand{\profv}{\smallprof{$\binom{\out(v)}{\inp(v)}$}}

\newcommand{\ccsingle}
{\smallprof{$\binom{c}{c}$}}

\newcommand{\dc}
{\smallprof{$\binom{\ud}{\uc}$}}
\newcommand{\brdc}
{\smallprof{$\binom{[\ud]}{[\uc]}$}}
\newcommand{\brdch}{([\uc];[\ud])}

\newcommand{\emptyprof}
{\smallprof{$\binom{\varnothing}{\varnothing}$}}

\newcommand{\inp}{\mathsf{in}}
\newcommand{\out}{\mathsf{out}}

\newcommand{\profilev}
{\smallprof{$\binom{\out(v)}{\inp(v)}$}}

\newcommand{\smallbr}[1]
{\raisebox{.03cm}{\scalebox{0.5}{#1}}}

\newcommand{\sigmabra}{\Sigma_{\smallbr{$[\ua]$}}}

\newcommand{\sigmabrc}{\Sigma_{\smallbr{$[\uc]$}}}

\newcommand{\sigmabrd}{\Sigma_{\smallbr{$[\ud]$}}}
\newcommand{\sigmabrr}{\Sigma_{\smallbr{$[\ur]$}}}

\newcommand{\sigmabrcop}{\sigmabrc^{\smallop}}

\newcommand{\sigmabrcopsigmabrd}{\sigmabrcop \times \sigmabrd}

\newcommand{\dch}{(\uc;\ud)}

\newcommand{\vertex}{\mathsf{Vt}}

\renewcommand{\lim}{\mathsf{lim}\,}

\DeclareMathOperator*{\colim}{\mathsf{colim}\,}

\DeclareMathOperator{\Hom}{Hom}
\DeclareMathOperator{\Aut}{Aut}

\DeclareMathOperator{\id}{id}
\DeclareMathOperator{\Id}{Id}

\newcommand{\alphabar}{\overline{\alpha}}

\begin{document}

\title[Shrinkability, relative left properness, derived base change]{Shrinkability, relative left properness, and derived base change}

\author{Philip Hackney, Marcy Robertson, and Donald Yau}
\address{Matematiska institutionen\\ Stockholms universitet\\ 106 91 Stockholm \\ Sweden}
\curraddr{Department of Mathematics\\ Macquarie University\\ NSW 2109 \\ Australia}
\email{hackney@math.su.se} 
\address{School of Mathematics and Statistics \\ The University of Melbourne \\ Victoria 3010 \\ Australia}\email{marcy.robertson@unimelb.edu.au}
\address{Department of Mathematics \\ The Ohio State University at Newark \\ Newark, OH}
\email{dyau@math.osu.edu}

\keywords{wheeled properads, operads, dioperads, model categories, left proper}

\subjclass[2010]{55U35, 18D50, 18G55, 55P48, 18D20}

\begin{abstract}
For a connected pasting scheme $\cG$, under reasonable assumptions on the underlying category, the category of $\mathfrak{C}$-colored $\cG$-props admits a cofibrantly generated model category structure.  In this paper, we show that, if $\cG$ is closed under shrinking internal edges, then this model structure on $\cG$-props satisfies a (weaker version) of left properness.  
Connected pasting schemes satisfying this property include those for all connected wheeled graphs (for wheeled properads), wheeled trees (for wheeled operads), simply connected graphs (for dioperads), unital trees (for symmetric operads), and unitial linear graphs (for small categories).  The pasting scheme for connected wheel-free graphs (for properads) does \emph{not} satisfy this condition. 

We furthermore prove, assuming $\cG$ is shrinkable and our base categories are nice enough, that a weak symmetric monoidal Quillen equivalence between two base categories induces a Quillen equivalence between their categories of $\cG$-props. The final section gives illuminating examples that justify the conditions on base model categories. 

\end{abstract}

\maketitle

\tableofcontents

\section{Introduction}

There has been significant recent work in determining when certain categories of operad-like objects admit Quillen model category structures \cite{jy1,muro11,fresse,dk,bb14}, or more generally determining when algebras over operad-like objects \cite{bm07,jy1} admit model category structures. 
All of these results require that the ground category $\M$ is well behaved; that is, when $\M$ is a cofibrantly generated monoidal model category, every object of $\M$ is cofibrant, and $\M$ admits functorial path data. 
Examples of such $\M$ include simplicial sets, symmetric spectra, and chain complexes in characteristic zero. In particular, the category of wheeled properads in a well-behaved model category carries a model category structure. 

In this paper we are not much concerned with the \emph{existence} of such model structures (we actually abstract it away in definition \ref{gprop-model-operad}), but rather in the \emph{properties} of the model structures when they exist.

The main property we investigate is that of (relative) left properness--that is to say we wish to know if equivalences between generalized props are closed under cobase change along cofibrations. Knowing if the model category structure on categories of generalized props satisfies a (relative) left properness result has many immediate applications. As an example, Bousfield localization is the process by which one adds weak equivalences to a model category while keeping the cofibrations fixed; in recent years it has become recognized as a fundamental tool in homotopy theory and in the related theory of $\infty$-categories. While it is not true that all model categories can be localized, an essential ingredient is that the initial model category be left proper. 

In the case of operads, there has been quite a bit of recent work on the question of relative left properness. If we are discussing \emph{non-symmetric} operads, then this model structure is (relatively) left proper \cite[Theorem 1.11]{muro14}, meaning that weak equivalences are preserved by pushouts along cofibrations between $\Sigma$-cofibrant objects.
A result of Rezk~\cite{rezkSA}, shows that that the category of symmetric operads is Quillen-equivalent to a left proper model category. 
In \cite{hryoperads}, however, we provided an example of Dwyer which shows that left-properness does \emph{not} hold for the category of symmetric operads itself. However, the category of symmetric operads does satisfy a weaker notion called \emph{relative left properness} \cite{hryoperads} and \cite[12.1.11]{fresse_book}.
The goal of this paper is to generalize this to other types of operad-like objects.

The main such operad-like structure that we address in this paper is that of wheeled properads \cite{mms} (although our theorems also apply to dioperads \cite{gan} and wheeled operads, in addition to the previously known cases of operads and categories).
Wheeled properads control (bi)-algebraic structures with traces, such as Frobenius algebras and unimodular Lie 1-bialgebras.
These ideas have proven fruitful in geometric situations related to mathematical physics, for instance to formal quasi-classical split quantum BV manifolds \cite{merkulov3}.

The following is a special case of our first main theorem \ref{gprop-relative-left-proper}.

\begin{unnumthm}
	Suppose that $\M$ is a ``nice enough'' monoidal model category (Definition~\ref{def_especially_nice}). 
	Then the model category structure on the category of wheeled properads in $\M$ is relatively left proper, in the sense that whenever we have a pushout diagram of wheeled properads
	\[
	\xymatrix{ A\,\, \ar@{>->}[r]  \ar@{->}[d]_-\simeq & X \ar@{->}[d] \\
	B\,\, \ar@{>->}[r] & Y
	}\] 
	with $A$ and $B$ both $\Sigma$-cofibrant and $A\to X$ a cofibration, the map $X\to Y$ is also a weak equivalence.  
\end{unnumthm}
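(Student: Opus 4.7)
The approach is to adapt the filtration strategy used for symmetric operads in \cite{hryoperads} to the present setting of $\cG$-props. By the small-object argument and the closure of weak equivalences in $\gpropm$ under transfinite composition and retracts, together with standard model-category reductions, it suffices to treat the case where $A \to X$ is a single cell attachment, that is, $X = A \sqcup_{F(K)} F(L)$ for a generating cofibration $K \to L$ in $\symseqcm$ and $F$ the free $\cG$-prop functor. The hypothesis that $\M$ is ``nice enough'' (Definition~\ref{def_especially_nice}) provides the cofibrant generation and functorial path-object input needed for these reductions.

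In this reduced setting, we construct a filtration $B = Y_0 \to Y_1 \to Y_2 \to \cdots$ with $\colim_n Y_n = Y$, in which $Y_n$ is obtained from $Y_{n-1}$ by a pushout whose left leg parameterizes $\cG$-graph-substitution expressions with exactly $n$ vertices labeled by components of $L$ and all remaining vertices labeled by $B$. The shrinkability of $\cG$ is the essential combinatorial ingredient at this step: as corollas are substituted into vertices of a $\cG$-graph and internal edges are formed and contracted, the resulting graphs must remain in $\cG$, and shrinkability is precisely the axiom guaranteeing this. Without it, the filtration would not be well-defined inside $\gpropm$.

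Each layer $Y_{n-1} \to Y_n$ is then identified with a pushout of an iterated pushout-product of the map $K \to L$, indexed over a set of $\cG$-graphs with $n$ distinguished ``new'' vertices and quotiented by the associated automorphism action. Using that $A$ and $B$ are $\Sigma$-cofibrant, that $A \to B$ is a weak equivalence, and the pushout-product and unit axioms in $\M$, one checks that each such map is a weak equivalence; taking the transfinite composition then shows $X \to Y$ is a weak equivalence.

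The principal obstacle is the combinatorial analysis of the filtration and its associated graded pieces. For symmetric operads the analogous analysis is carried out with trees; here the presence of wheels and more general connected $\cG$-graphs makes the bookkeeping for substitution, automorphism groups, and the counting of ``new'' versus ``old'' vertices substantially more delicate, and it is exactly at this stage that shrinkability will be invoked repeatedly. The $\Sigma$-cofibrancy of both $A$ and $B$ is precisely what ensures that the $\Aut(G)$-equivariant quotients appearing in each layer of the filtration preserve weak equivalences, and this is why relative, rather than full, left properness is the natural statement.
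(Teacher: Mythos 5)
Your proposal follows essentially the same route as the paper: reduce to free cell attachments, filter the pushout by the number of distinguished vertices decorated by the new cell (Lemma~\ref{opc-jt}), identify each layer as an induced iterated pushout-product indexed by reduced marked graphs in the shrinkable pasting scheme, and use $\Sigma_{\cG}$-cofibrancy of the $\Aut(G,\sds)$-quotients together with the cube lemma to propagate the weak equivalence (Propositions~\ref{sigmacof-closure} and~\ref{left-proper-key}). The only point to make explicit is that one must filter \emph{both} $X$ and $Y$ in parallel and compare corresponding layers of the resulting ladder via the cube lemma, since the individual layer maps $Y_{n-1}\to Y_n$ are cofibrations rather than weak equivalences.
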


A similar theorem holds for dioperads, wheeled operads, and so on. In order to justify the restrictions we place on our monoidal model categories we include in Section~\ref{appendix} several nontrivial examples to illuminate when (relative) left properness fails.

As soon as we are examining wheeled properads over general base categories, we can ask what happens when we modify the underlying category. Schwede and Shipley showed that, in favorable situations, a Quillen equivalence of the base categories induces a Quillen equivalence on the categories of monoids \cite{ss03}. 
Muro extended this in \cite[Theorem 1.1]{muro14} to show that such a Quillen equivalence also induces a Quillen equivalence on the category of non-symmetric operads.
The following is a special case of our second main theorem \ref{derived-basechange}.

\begin{unnumthm}
	Suppose that $\M$ and $\N$ are ``nice enough'' monoidal model categories (Definition~\ref{def_especially_nice}).
	Then a weak symmetric monoidal Quillen equivalence $\M \adjoint \N$ induces a Quillen equivalence between the associated categories of wheeled properads.
\end{unnumthm}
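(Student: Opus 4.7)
The plan is to adapt the Schwede--Shipley and Muro strategies for derived base change to the setting of $\cG$-props, using the relative left properness result of Theorem~\ref{gprop-relative-left-proper} to control the cell-by-cell analysis. First, I construct the induced adjunction: the weak symmetric monoidal Quillen equivalence $L : \M \adjoint \N : R$ produces a right adjoint $R_* : \gpropn \to \gpropm$ by applying $R$ entrywise and using the lax symmetric monoidal structure of $R$ to generate the $\cG$-prop structure maps. Its left adjoint $L_* : \gpropm \to \gpropn$ exists by standard adjoint functor arguments and agrees with ``apply $L$ entrywise, then freely generate'' on free $\cG$-props. Since the model structure of Definition~\ref{gprop-model-operad} has entrywise fibrations and weak equivalences, and $R$ preserves both fibrations and trivial fibrations, the pair $L_* \dashv R_*$ is a Quillen adjunction.

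To upgrade this to a Quillen equivalence, I would use the criterion that (i) $R_*$ reflects weak equivalences between fibrant objects, and (ii) the derived unit $X \to R_* L_* X$ is a weak equivalence for every cofibrant $X \in \gpropm$. Part (i) follows immediately from the entrywise description of weak equivalences in $\gprop$ together with the corresponding property of the Quillen equivalence $L \dashv R$ on the base categories. For part (ii), I would express a cofibrant $X$ as a transfinite composite of pushouts of the form $X_{\alpha+1} = F_{\M}(B) \sqcup_{F_{\M}(A)} X_\alpha$, where $A \rightarrowtail B$ is a generating cofibration of $\M$ and $F_{\M}$ denotes the free $\cG$-prop functor. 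Because $L_*$ preserves colimits and intertwines the free functors on both sides, $L_* X_{\alpha+1}$ is the corresponding pushout in $\gpropn$. It then suffices, by transfinite induction, to show that the unit at each stage $X_\alpha \to R_* L_* X_\alpha$ is an entrywise weak equivalence in $\M$. The key technical step is a filtration of the pushout indexed by graphs in the pasting scheme $\cG$, expressing each entry of $X_{\alpha+1}$ as a colimit of tensor products built from copies of $A$, $B$, and the entries of $X_\alpha$, with a parallel expression for the entries of $L_* X_{\alpha+1}$ in terms of $L A$, $L B$, and the entries of $L_* X_\alpha$.

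The main obstacle is to control this graph-theoretic filtration homotopy-coherently. One must verify that each successive stage of the filtration is a cofibration between $\Sigma$-cofibrant objects, so that relative left properness (Theorem~\ref{gprop-relative-left-proper}) can be invoked to propagate weak equivalences across the pushouts. The shrinkability hypothesis on $\cG$ enters precisely here, via the same combinatorial mechanism used to establish relative left properness. Because $\M$ and $\N$ are especially nice---cofibrantly generated monoidal model categories in which every object is cofibrant and which admit functorial path data---each tensor product in the filtration is a homotopy-invariant construction, and the strong symmetric monoidal character (up to weak equivalence) of the Quillen equivalence $L$ converts the pointwise unit weak equivalences $\mathrm{id}_{\M} \to R L$ into entrywise weak equivalences of $\cG$-props. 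Assembling these ingredients with the verification of the derived unit on a free $\cG$-prop---which reduces to the classical Schwede--Shipley calculation for free monoids---then yields the desired Quillen equivalence.
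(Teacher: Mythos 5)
Your overall skeleton (build the adjunction via the adjoint lifting theorem, check it is Quillen because fibrations and weak equivalences are entrywise, then run a cell induction over the presentation of a cofibrant object as a transfinite composite of pushouts of free maps) matches the paper's. But there is a genuine gap at the heart of part (ii): you propose to show by transfinite induction that the \emph{unit} $X_\alpha \to R_* L_* X_\alpha$ is an entrywise weak equivalence, and this is not a workable induction. The right adjoint $R_*$ does not commute with the pushouts and transfinite compositions in the cell filtration, so knowing the unit is an equivalence at stage $\alpha$ gives you no handle on stage $\alpha+1$ or on the colimit; moreover what must be checked is the \emph{derived} unit (composed with fibrant replacement in $\gpropn$), which your argument does not address. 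The paper's proof circumvents both problems by introducing a comparison map $\chi_P : LP \to \Lg P$ in $\Ntos$ (the adjoint of the unit, viewed after forgetting down to diagram categories) and proving, by exactly the cell induction you describe but now with \emph{two left adjoints} on either side, that $\chi_P$ is a weak equivalence for every cofibrant $P$ (Propositions \ref{initialprop-chi}, \ref{chi-weq}, \ref{cof-chi-weq}). Since both $L$ and $\Lg$ preserve the relevant colimits, the ladder of filtrations actually exists; the Quillen-equivalence statement then reduces, via $\chi_A$, to the already-known Quillen equivalence of the entrywise prolongation $\Mtos \adjoint \Ntos$, where the derived-unit bookkeeping is classical. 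Without $\chi$ there is no bridge between $\Lg$ (which is \emph{not} the entrywise prolongation of $L$ when $L$ is only weakly monoidal) and the base-level equivalence $L \dashv R$.

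A second, related misstep is your appeal to relative left properness (Theorem \ref{gprop-relative-left-proper}) to ``propagate weak equivalences across the pushouts.'' That theorem compares two pushouts \emph{within one category} $\gpropm$ along a map of $\cG$-props; here one must instead compare $L$ applied to the $\M$-side filtration of Lemma \ref{opc-jt} with the $\N$-side filtration of $\Lg A_\infty$. The cube-lemma mechanics are shared with Proposition \ref{left-proper-key}, and the $\Sigma$-cofibrancy control from Proposition \ref{sigmacof-closure} is indeed reused, but the step that makes the two filtrations agree up to weak equivalence is the comonoidal structure map $L(X \otimes Y) \to LX \otimes LY$ being a weak equivalence on cofibrant objects (the defining property of a \emph{weak} monoidal Quillen pair, fed through \cite{muro14} (4.3) to handle the iterated pushout products $Q^k_{k-1}(i)$ versus $Q^k_{k-1}(Li)$). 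Your proposal gestures at this with ``the strong symmetric monoidal character (up to weak equivalence),'' but left properness as a black box cannot supply it; you would need to spell out the maps $\alpha_1$ (comonoidal comparison) and $\alpha_2$ (induced by $\chi_A$) separately, as the paper does.
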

The right adjoint of the induced Quillen equivalence is defined levelwise, while the left adjoint is more subtle; this will be carefully constructed in section \ref{section basechange}.

Throughout this paper, we use the language of generalized props from \cite{jy2} which allows us to treat many cases simultaneously. This material is briefly covered in section~\ref{section_graphs_ps}. 
Two cases of interest which are \emph{not} addressed by the present paper are properads and props, as these cases do not satisfy a technical condition on pasting schemes, called ``shrinkability.'' Having our pasting schemes satisfy the shrinkability condition simplifies many constructions and arguments. It would be interesting to see if the corresponding results held for properads and props.

\subsection*{Related work:}
In a recent paper \cite{bb14}, Batanin and Berger develop similar results about the existence of a model category structure and its relative left properness using the framework of tame polynomial monads. A polynomial monad is a monad that encodes behavior similar to what we call pasting schemes. A polynomial monad is called tame if the ambient compactly generated model category is (strongly) $h$-monoidal. The main results of their paper imply that if a polynomial monad is tame, then the operad-like categories it encodes will have a (relative) left properness property. 

The techniques of this paper do not overlap significantly, and, in fact, should be considered in parallel.   In particular, the paper of Batanin and Berger show in \cite[Proposition 10.8, 10.9]{bb14} that the monad for wheeled properads is not tame. The model categories we consider in this paper are all $h$-monoidal \cite[Lemma 1.12]{bb14}, and we achieve a (relative) left properness result for wheeled properads. However, as we show in the final section, there also exist $h$-monoidal model categories, namely the category of simplicial sets with the usual Kan model structure, in which we don't have a (relative) left properness result. The conclusion is that, while tameness of a polynomial monad (or associated pasting scheme) is a good indicator of whether or not a category of operad-like objects is (relatively) left proper, it is not capturing the picture in its entirety.

\section{Preliminaries} 
\subsection{(Model) Categorical assumptions}

In this section we fix notation and definitions which our underlying categories satisfy. 

\begin{definition}[\cite{hirschhorn} (11.1.2)]
A model category $\M$ is said to be cofibrantly generated if there exist 
\begin{enumerate}
	\item a set $\sI$ of generating cofibrations which permits the small object argument \cite{hirschhorn} (10.5.15) such that a map is an acyclic fibration if and only if it has the right lifting property with respect to every element of $\sI$.
	\item a set $\sJ$ of generating acyclic cofibrations which permits the small object argument such that a map is a fibration if and only if it has the right lifting property with respect to every element of $\sJ$.
\end{enumerate}
\end{definition}

\begin{notation}
\label{convention-smc}
In general, when discussing closed symmetric monoidal categories \cite[VII]{maclane}, we will write $\otimes$ for the monoidal product, $\tensorunit$ for the tensor unit, and $\Hom(X,Y)$ for the internal hom object.
\end{notation}

The following definition is \cite{ss03} (3.1).

\begin{definition} Suppose that $\M$ is a model category. 
If $\M$ is also a closed symmetric monoidal category, we say that $\M$ is a \textbf{monoidal model category} if it satisfies
the following two axioms.
		\begin{description}
			\item[Pushout product] For each pair of cofibrations $f: A\to B$, $g: K\to L$, the map 
			\[
				f \boxprod g : A \otimes L \coprod_{A \otimes K} B \otimes K \to B \otimes L
			\]
			is also a cofibration. If, in addition, one of $f$ or $g$ is a weak equivalence, then so is $f\boxprod g$.
			\item[Unit] If $q: \tensorunit^c \overset\simeq\to \tensorunit$ is a cofibrant replacement of the unit object, then for every cofibrant object $A$,
			\[
				q \otimes \Id : \tensorunit^c \otimes A \to \tensorunit \otimes A \cong A
			\]
			is a weak equivalence.
		\end{description}
\end{definition}

\begin{convention}
For the rest of the paper, all model categories $\M$ will be monoidal model categories which are cofibrantly generated by some chosen sets of generating (acyclic) cofibrations $\sI$ (resp. $\sJ$).
\end{convention}

\subsection{Graphs and pasting schemes}\label{section_graphs_ps}

We will briefly give the necessary definitions and notations regarding colored objects in $\calm.$ A more complete discussion of the following definitions can be found in \cite{jy2}.

\begin{definition}[Colored Objects]
\label{def:profiles}
Fix a non-empty set  of \textbf{colors}, $\fC$. 
\begin{enumerate}
\item
A \textbf{$\fC$-profile} is a finite sequence of elements in $\fC$,
\[
\uc = (c_1, \ldots, c_m) = c_{[1,m]}
\]
with each $c_i \in \fC$.  If $\fC$ is clear from the context, then we simply say \textbf{profile}. The empty $\fC$-profile is denoted $\varnothing$, which is not to be confused with the initial object in $\calm$.  Write $|\uc|=m$ for the \textbf{length} of a profile $\uc$.
\item
An object in the product category $\prod_{\fC} \calm = \calm^{\fC}$ is called a \textbf{$\fC$-colored object in $\calm$}; similarly a map of $\fC$-colored objects is a map in $\prod_{\fC} \calm$.  A typical $\fC$-colored object $X$ is also written as $\{X_a\}$ with $X_a \in \calm$ for each color $a \in \fC$.
\item 
Fix $c \in \fC$.  An $X \in \calmc$ is said to be \textbf{concentrated in the color $c$} if $X_d = \varnothing$ for all $c \not= d \in \fC$.
\item
For $f : X \to Y \in \calm$ we say that $f$ is said to be \textbf{concentrated in the color $c$} if both $X$ and $Y$ are concentrated in the color $c$.
\end{enumerate}
\end{definition}

\begin{definition}
A \emph{graph} $G$ consists of
\begin{itemize}
\item a directed, connected, non-empty graph $G$ with half-edges (also called `flags'), 
\item listings on the inputs and outputs of the graph
\[
	\ell_G : (\inp G; \out G) \overset{\cong}\to (1, \dots, |\inp G|; 1, \dots, |\out G|)  \]
and
\item listings on the inputs and outputs of each vertex $v\in \vertex(G)$
\[
	\ell_v : (\inp v; \out v) \overset{\cong}\to (1, \dots, |\inp v|; 1, \dots, |\out v|)  \]
\end{itemize}
If, in addition, we have a coloring function $\xi: \Edge(G) \to \fC$ to some set $\fC$, then we say that $G$ is a \emph{$\fC$-colored graph}.
	\begin{itemize}
		\item A \textbf{weak isomorphism} $G\to G'$ between two $\fC$-colored graphs is a isomorphism which preserves the graph structure and the coloring, but not necessarily the listings.
		\item The collection of $\fC$-colored graphs with weak isomorphisms forms a groupoid which we denote by $\gwheelc (\fC)$.
	\end{itemize}
\end{definition}

\begin{figure}
	\includegraphics[height=0.65in]{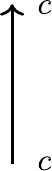} \qquad \includegraphics[width = .75in]{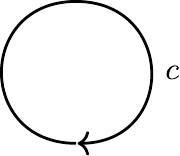} \qquad \includegraphics[width=.75in]{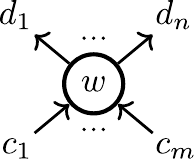}
	\caption{An exceptional edge, an exceptional loop, and a corolla
	(graphics from \cite{jy2})
	}\label{figure examples}
\end{figure}

\begin{example} See figure \ref{figure examples}.
	\begin{itemize}
		\item If $c\in \fC$, then there is a $\fC$-colored graph $\uparrow_c$ which has profiles $(c;c)$, no vertices, and a single edge.
		\item If $c\in \fC$, then there is a $\fC$-colored graph $\circlearrowright_c$ which has profiles $(\varnothing; \varnothing)$, no vertices, and a single $c$-colored edge.
		\item If $\dch = (c_{[1,m]}; d_{[1,n]})$ is a pair of $\fC$-profiles, then there is \textbf{standard corolla} $C_{\dch}$ with 
		\begin{itemize}
			\item one vertex $v$;
			\item $m+n$ flags, with inputs $\{ 1^i, 2^i, \dots, m^i\}$ and outputs $\{ 1^o, \dots, n^o\}$; 
			\item $\ell_G(k^i) = \ell_v(k^i) = k$ and $\ell_G(k^o) = \ell_v(k^o) = k$; and
			\item $\xi(k^i) = c_k$ and $\xi(k^o) = d_k$.
		\end{itemize}
	\end{itemize}
\end{example}

An \emph{ordinary} internal edge is an edge that is neither an exceptional edge $\uparrow$ nor an exceptional loop $\circlearrowright$.

\begin{definition}[Graph operations] Suppose that $G$ is a $\fC$-colored graph with profiles $\dch$.
	\begin{itemize}
	\item If, for each $v\in \vertex(G)$, $H_v$ is a graph with profiles $\profilev$, then the \textbf{graph substitution} 
	\[
		G\{H_v\}_{v\in\vertex(G)}
	\]
	is the graph obtained from $G$ by
	\begin{itemize}
		\item replacing each vertex $v\in \vertex(G)$ with the graph $H_v$, and
		\item identifying the legs of $H_v$ with the incoming/outgoing flags of $v$.
	\end{itemize}
	See figure \ref{figure graph substitution}.
	\item The input extension $G_{in}$ is the graph with profiles $\dch$ where we graft a corolla $C_{(c_i; c_i)}$ onto the input leg $\ell_G^{-1} (i)$. See figure \ref{figure extensions}.
	\item The output extension $G_{out}$ is the graph with profiles $\dch$ where we graft a corolla $C_{(d_i; d_i)}$ onto the output leg $\ell_G^{-1} (i)$.
	\end{itemize}
\end{definition}

\begin{figure}
	\includegraphics[scale=0.4]{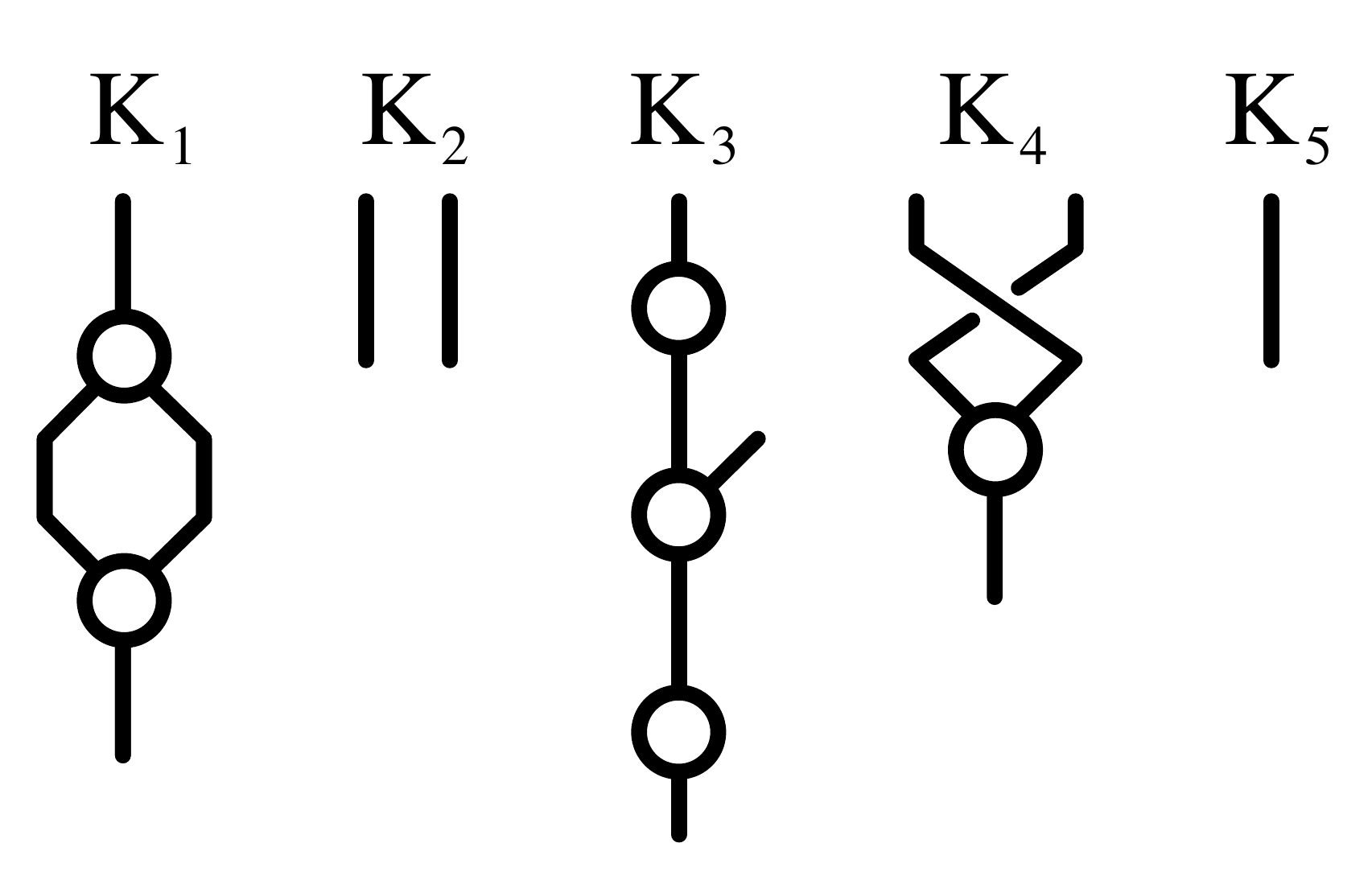}
	\includegraphics[scale=0.4]{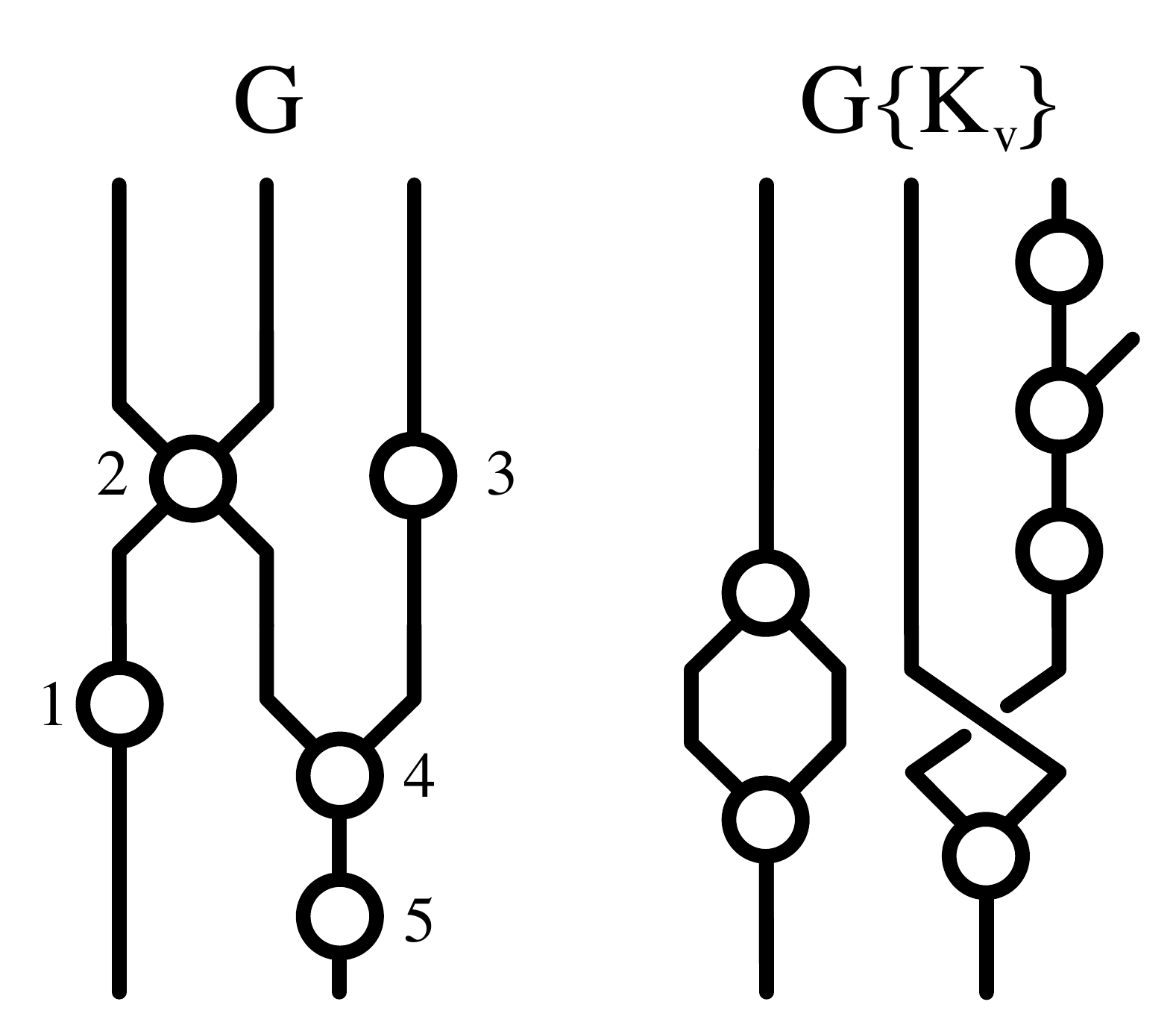}
	\caption{Graph substitution (from \cite{hr2})}\label{figure graph substitution}
\end{figure}

\begin{definition}[Groupoid of profiles]
Let $\fC$ be a non-empty set.  Given a $\fC$-profile $\ua$ and an element $\sigma\in\Sigma_{|\ua|}$ then define 
\[\sigma\ua = (a_{\sigma^{-1}(1)}, \ldots , a_{\sigma^{-1}(m)})\] and  

\[
\ua\sigma = (a_{\sigma(1)}, \ldots , a_{\sigma(m)}). 
\]

The groupoid of $\fC$-profiles, denoted $\pofc$, has objects $\fC$-profiles and morphisms left permutations $\ua\to \sigma\ua$. The opposite groupoid of $\fC$-profiles, denoted $\pofcop$, has objects $\fC$-profiles and morphisms right permutations $\ua\to \ua\sigma$. 
\end{definition}

A subcategory $C'$ of a category $C$ is called \textbf{replete} if, for each object $c' \in C'$ and each isomorphism $c' \to c$ in $C$, the object $c$ is also in $C'$.

\begin{definition}\label{def_pasting_scheme}[Pasting Scheme]
A $\fC$-colored (connected, unital) pasting scheme is a pair
\[ \graphgpd \]
in which
\begin{enumerate}
	\item $S$ is a replete and full subgroupoid of $\pofcop \times \pofc$, and
	\item $\tg$ is a replete and full subgroupoid of $\gwheelc(\fC)$
\end{enumerate}
such that:
\begin{enumerate}
	\item if $\tg$ is in $\cg$ and $(\uc;\ud)$ is the input-output profile of $\tg$, then $(\uc;\ud)\in S$,
	\item $\cg$ is closed under graph substitution,
	\item $\tg$ contains all the $\dch$-corollas for all pairs of profiles $\dch \in S$,
	\item if $\dch \in S$ and $C=C_{\dch}$ is the $\dch$-corolla, then the input extension $C_{in}$ and the output extension $C_{out}$ are both in $\tg$ (see figure \ref{figure extensions}), and
	\item if $c\in \fC$, then $(c; c) \in S$ and $\uparrow_c~\in \tg$.
\end{enumerate}
\end{definition}

\begin{figure}
	\includegraphics[width=0.8\textwidth]{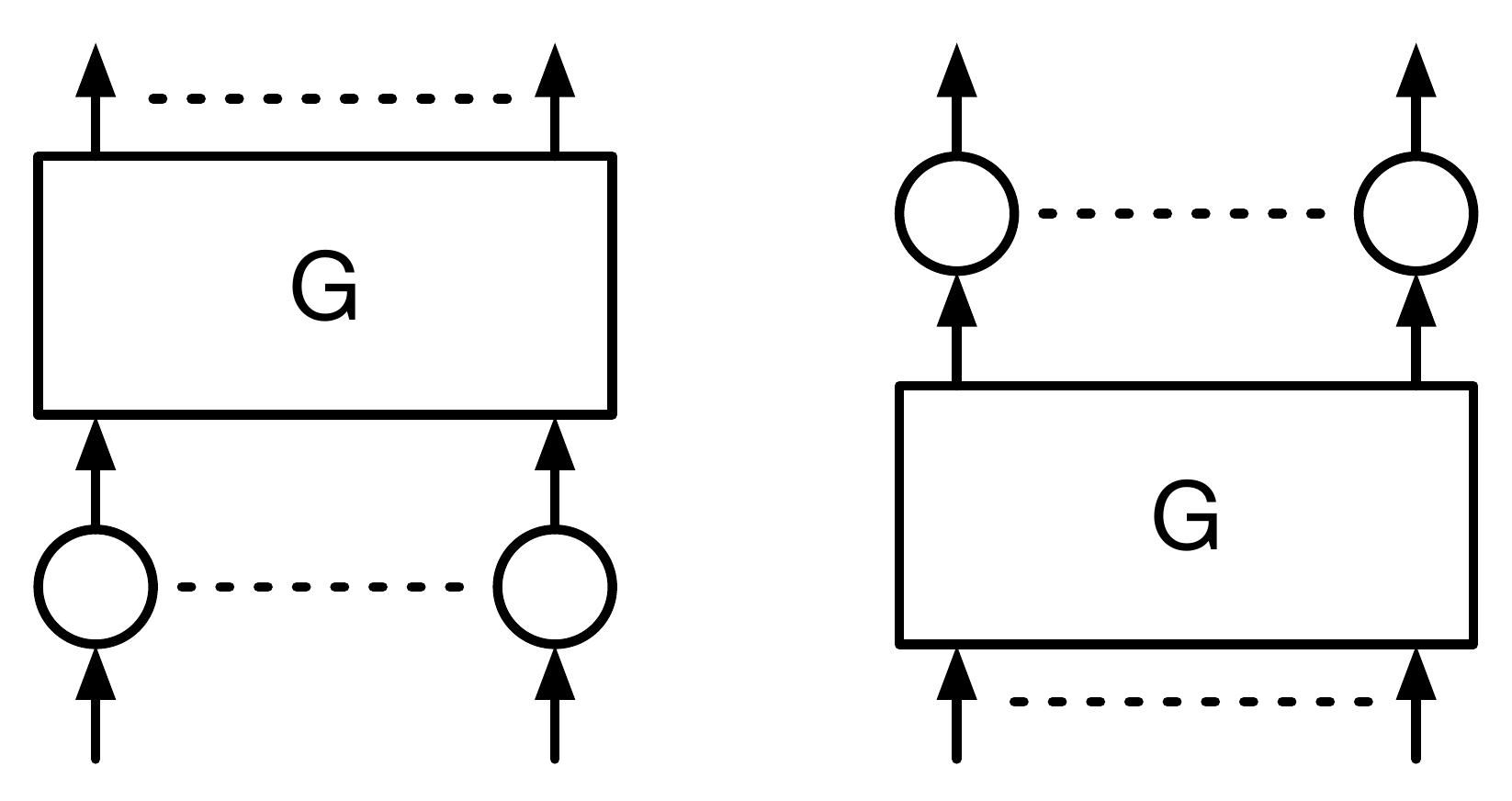} 
	\caption{
	The input extension $G_{in}$ and output extension $G_{out}$
	}
	\label{figure extensions}
\end{figure}

\begin{definition}[Colored Symmetric Sequences]
\label{def:colored-sigma-object}
Fix a non-empty set $\fC$ and $\graphgpd$ is a $\fC$-colored pasting scheme \cite{jy2} (Def. 8.2).
\begin{enumerate}
\item
The \textbf{orbit} of a profile $\ua$ is denoted by $[\ua]$.  The maximal connected sub-groupoid of $\pofc$ containing $\ua$ is written as $\sigmabra$.  Its objects are the left permutations of $\ua$.  There is an \textbf{orbit decomposition} of $\pofc$
\begin{equation}
\label{pofcdecomp}
\pofc \cong \coprod_{[\ua] \in \pofc} \sigmabra,
\end{equation}
where there is one coproduct summand for each orbit $[\ua]$ of a $\fC$-profile. 
\item
Consider the diagram category $\M^S$, whose objects are called \textbf{$\sigmag$-objects in $\M$}.  The decomposition \eqref{pofcdecomp} implies that there is a decomposition
\begin{equation}
\label{m-tothe-s-again}
\M^{S}
\cong 
\prod_{([\uc];[\ud]) \in S} \M^{\sigmabrcopsigmabrd}.
\end{equation}
\item
For $X \in \M^{S}$, we write
\begin{equation}
\label{sigmacopd-component}
X\brdc \in \calm^{\sigmabrcopsigmabrd}
\end{equation}
for its $\brdch$-component.  For $\dc \in S$ (both $\uc$ and $\ud$ are $\fC$-profiles), we write
\begin{equation}
\label{dc-component}
X\dc \in \calm
\end{equation}
for the value of $X$ at $\dch$.
\end{enumerate}
\end{definition}

Unless otherwise specified, we will assume that $\fC$ is a fixed, non-empty set of colors, and $\graphgpd$ is a $\fC$-colored pasting scheme.

\subsection{$\cG$-Props as colored operadic algebras} 
For a $\fC$-colored operad $\sO$ in $\M$ (see \cite{jy2} 11.14 or \cite{yau-operad}  11.2.1), denote by $\alg(\sO)$ the category of $\sO$-algebras (\cite{jy2} 13.37 or \cite{yau-operad} 13.3.2).  Limits of $\alg(\sO)$ are taken in the underlying category of $\fC$-colored objects $\calmc$ via the free-forgetful adjoint pair
\[
\nicexy{
\calmc \ar@<2pt>[r]^-{\sO \comp -} 
& \alg(\sO). \ar@<2pt>[l]
}\]
Here $\circ$ is the $\fC$-colored version of the circle product for operads, which in single-colored form first appeared in \cite{rezk}. 
Detailed description of the general colored version of $\circ$ can be found in \cite{white-yau} (3.2).

The category of $\fC$-colored objects, $\calmc$, admits a cofibrantly generated model category structure where weak equivalences, fibrations, and cofibrations are defined entrywise, as described in \cite{hirschhorn} (11.1.10).  
In this model category a generating cofibration in $\calmc = \prod_{\fC} \calm$ (i.e., a map in $\sI$) is a generating cofibration of $\calm$, concentrated in one entry.  Similarly, the set of generating acyclic cofibrations is $\sJ \times \fC$.  In addition, the properties of being simplicial, or proper, are inherited from $\calm$.

Recall, from Section 14.1 of \cite{jy2}, the $|S|$-colored operad $\opg$ (there called $\ugbar$) controlling $\cg$-props, where $|S|$ is the set of objects of the groupoid $S$. The elements of $\opg$ are graphs with ordered sets of vertices, and operadic composition is given by graph substitution. 
The following is Lemma 14.4 in \cite{jy2}.

\begin{lemma}
\label{gprops-are-algebras}
Suppose $\M$ is a bicomplete symmetric monoidal closed category, and  $\graphgpd$ is a pasting scheme \cite{jy2} (Def. 8.2).  Then there is an $|S|$-colored operad $\opg$ in $\M$ such that there is a canonical isomorphism of categories
\[
\gprop \cong \alg(\opg)
\]
between:
\begin{itemize}
\item
the category $\gprop$ of $\cG$-props in $\M$ \cite{jy2} (Def. 10.39) and 
\item
the category $\alg(\opg)$ of algebras over the colored operad $\opg$ in $\M$.
\end{itemize}
\end{lemma}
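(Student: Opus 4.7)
The plan is to exhibit the isomorphism of categories by unwinding both sides as families of structure maps indexed by graphs. Unpacking definitions, a $\cg$-prop $X$ has underlying data in $\M^S$ together with, for every graph $G \in \tg$, a structure map
\[
\mu_G \colon \bigotimes_{v \in \Vt(G)} X_v \longrightarrow X_G
\]
(where $X_v$ and $X_G$ denote the evaluation of $X$ at the profiles of $v$ and of $G$) that is equivariant under weak isomorphisms of graphs, unital with respect to the exceptional edges $\uparrow_c$, and associative with respect to graph substitution $G\{H_v\}_{v \in \Vt(G)}$. An algebra over the $|S|$-colored operad $\opg$ consists of an $|S|$-colored object of $\M$ together with structure maps parameterized by $\opg$ satisfying the usual operadic associativity, equivariance, and unit axioms.

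First I would construct $\opg$ explicitly. Each arity component is a coproduct of copies of the monoidal unit $\tensorunit$ indexed by ordered-vertex graphs $G \in \tg$ whose global profile and vertex-profile list match the prescribed output and input colors, quotiented by the groupoid of graph isomorphisms; operadic composition is implemented by graph substitution, and the colored operadic identities by the exceptional edges $\uparrow_c$. The operad axioms for $\opg$ follow directly from the pasting scheme axioms of Definition \ref{def_pasting_scheme}, notably closure of $\tg$ under graph substitution and the presence of corollas together with their input/output extensions. Next I would define the two functors in the claimed isomorphism. Given a $\cg$-prop $\{\mu_G\}$, the family assembles, via the universal property of the coproduct, into $\opg$-action maps; these descend through the graph-isomorphism quotient precisely because of the equivariance built into the $\cg$-prop axioms, and the groupoid-equivariance data upgrading $X$ from an $|S|$-colored object to an object of $\M^S$ is recovered from the $\opg$-action on the summands indexed by relabeled corollas. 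Conversely, restricting an $\opg$-algebra structure to each graph-summand produces a well-defined $\mu_G$. These assignments are mutually inverse by inspection and natural in morphisms.

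The main obstacle is bookkeeping: the two sides arrange the same structure maps in different ways, and one must match the coherence axioms on the nose. Operadic associativity of $\opg$, via graph substitution in its composition formula, translates exactly into graph-substitution associativity of $\cg$-props; operadic equivariance corresponds to groupoid equivariance of weak graph isomorphisms; and the operadic unit axioms encode unitality on exceptional edges together with compatibility with the $|S|$-coloring. Once the vertex listings and colorings are tracked carefully, the verification of each axiom is essentially tautological.
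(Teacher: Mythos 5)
The paper does not prove this lemma itself: it is imported verbatim as Lemma 14.4 of [YJ15], with $\opg$ there called $\ugbar$, so there is no in-paper proof to compare against. Your sketch reconstructs exactly the construction the paper recalls from that source --- components of $\opg$ given by graphs in $\tg$ with ordered vertex sets (tensored with $\tensorunit$), composition by graph substitution, units by exceptional edges, and the $\M^S$-equivariance recovered from relisted corollas --- and is correct in outline. One caution: since graphs are already taken as \emph{strict} isomorphism classes, you should not further quotient the summands by \emph{weak} isomorphisms (which forget listings), or the relisted corollas encoding the $\sigmabrcopsigmabrd$-actions would collapse to the identity; the weak-isomorphism equivariance of the structure maps $\mu_G$ is a consequence of the operad axioms, not something to be imposed on $\opg$ itself.
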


\begin{definition}
\label{gprop-model-operad}
Suppose that $\calm$ is a monoidal model category which is cofibrantly generated with set of generating cofibrations $\sI$ and set of generating acyclic cofibrations $\sJ$.
If $\cG$ is a pasting scheme, we say \textbf{$\cG$ is admissible in $\calm$} if $\gpropm = \gprop \cong \alg(\opg)$ admits a cofibrantly generated model structure\footnote{We say admissible here since, due to the conditions on our base category $\M$, all colored operads $\opg$ are admissible in the language of \cite{bm07}.}, in which:
\begin{itemize}
\item
fibrations and weak equivalences are created entrywise in $\calm$, and
\item
the set of generating (acyclic) cofibrations is $\opg \comp \sI$ (resp., $\opg \comp \sJ$), where $\sI$ (resp., $\sJ$) is the set of generating (acyclic) cofibrations in $\M^{|S|}$.
\end{itemize}
\end{definition}

Bicompleteness is automatic by \cite{white-yau} (4.2.1), with reflexive coequalizers and filtered colimits preserved and created by the forgetful functor $\gprop \to \M^{|S|}$.
For certain choices of $\M$, such as compactly generated Hausdorff spaces, simplicial sets, and symmetric spectra (with the projective model structure) we know that each pasting scheme $\cG$ is admissible in $\M$ by \cite{bm07} (2.1).

\begin{example}
If $k$ is a characteristic zero field and $\M = \operatorname{Ch}(k)$ or $\operatorname{Ch}_{\geq 0}(k)$ with the projective model structure (\cite[2.3.11]{hovey}, \cite[4.12]{quillen}, respectively), then $\M$ is admissible for every $\cG$.
In the unbounded case, Fresse shows in \cite[5.3]{fresse} that $\operatorname{Ch}(k)$ admits `functorial path data', which combined with Lemma \ref{lemma semisimple} shows that $\M$ is nice in the sense of \cite[2.6.6]{hryoperads}. 
Thus every $\cG$ is admissible in $\operatorname{Ch}(k)$ by \cite[2.6.8]{hryoperads}. 

For the non-negatively graded case, consider the truncations from \cite[1.2.7]{weibel} of a chain complex $C$:
\[
	(\tau_{\geq n} C)_i = \begin{cases}
		0 & \text{if }i<n \\
		Z_nC & \text{if }i=n \\
		C_i & \text{if }i > n.
	\end{cases}
\]
These have the property that the inclusion $\tau_{\geq n} C \to C$ is a chain map which is an isomorphism on $H_i$ for $i\geq n$.
Since $\operatorname{Ch}(k)$ admits functorial path data, then so does $\operatorname{Ch}_{\geq 0}(k)$ by defining the path object of $C \in \operatorname{Ch}_{\geq 0}(k)$ to be $\tau_{\geq 0} \operatorname{Path}(C)$.
The result again follows from \cite[2.6.8]{hryoperads}.
\end{example}

We believe the following lemma is well-known, but were unable to find a proof in the literature.

\begin{lemma}\label{lemma semisimple}
If $R$ is a semisimple ring, then every object of $\operatorname{Ch}(R)$ is cofibrant.
\end{lemma}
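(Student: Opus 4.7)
The plan is to exploit semisimplicity to decompose an arbitrary chain complex as a direct sum of very simple building blocks, each of which can be recognized as cofibrant directly from the description of Hovey's generating (acyclic) cofibrations. Throughout, let $S^n$ denote $R$ concentrated in degree $n$ and $D^n$ the complex $R \xrightarrow{\id} R$ in degrees $n$ and $n-1$; for a left $R$-module $M$, write $S^n(M)$ and $D^n(M)$ for the analogous complexes with $M$ in place of $R$. Recall that in the projective model structure on $\operatorname{Ch}(R)$ the generating cofibrations are $S^{n-1} \to D^n$ and the generating acyclic cofibrations are $0 \to D^n$.

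First I would observe that since $R$ is semisimple, every short exact sequence of $R$-modules splits. Applied to $0 \to Z_n C \to C_n \to B_{n-1} C \to 0$ and $0 \to B_n C \to Z_n C \to H_n C \to 0$, this yields $R$-module splittings $C_n \cong B_n C \oplus H_n C \oplus B_{n-1} C$ in each degree, compatible with the differential (which on these summands becomes the identity $B_{n-1}C \to B_{n-1}C$ sitting inside $C_{n-1}$, and zero on the rest). Assembling these degreewise splittings gives an isomorphism of chain complexes
\[
C \;\cong\; \bigoplus_{n} S^n(H_n C) \;\oplus\; \bigoplus_{n} D^n(B_{n-1} C).
\]

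Next I would verify that each summand on the right is cofibrant. Since $R$ is semisimple, each of $H_n C$ and $B_{n-1} C$ is projective, hence a retract of a free $R$-module. The complex $D^n(R)$ is cofibrant because $0 \to D^n(R)$ is a generating acyclic cofibration; and $S^n(R)$ is cofibrant because it sits in the pushout square
\[
\xymatrix{
S^{n-1} \ar[r] \ar[d] & D^n \ar[d] \\
0 \ar[r] & S^n
}
\]
with the top map a generating cofibration. Cofibrations are stable under coproducts and retracts, so $S^n(P)$ and $D^n(P)$ are cofibrant for every projective $R$-module $P$. Finally, since coproducts of cofibrations are cofibrations, arbitrary direct sums of cofibrant objects are cofibrant, and the displayed decomposition of $C$ shows that $C$ is cofibrant.

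The only delicate point is the decomposition step: one has to be careful that the splittings in each degree can be chosen compatibly so that the resulting isomorphism is a chain map, but this is a standard and routine consequence of splitting the two short exact sequences in each degree (the identification $C_n \cong B_n \oplus H_n \oplus B_{n-1}$ makes the differential explicit). There is no issue with the complex being unbounded, because both the decomposition and the cofibrancy argument are performed one summand at a time and then assembled via a direct sum. The main conceptual content is simply that semisimplicity forces every chain complex to be formal, and formal complexes are manifestly cofibrant in the projective model structure.
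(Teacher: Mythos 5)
Your proof is correct, but it takes a genuinely different route from the paper's. The paper filters $C$ by its truncations $\tau_{\geq n} C$: it checks that each $\tau_{\geq n} C \to \tau_{\geq n-1} C$ is a degreewise split injection with bounded-below projective cokernel (hence a cofibration by \cite[2.3.6, 2.3.9]{hovey}), that $\tau_{\geq 0}C$ is cofibrant, and then takes the colimit over $n \leq 0$. You instead use semisimplicity to split the sequences $0 \to Z_nC \to C_n \to B_{n-1}C \to 0$ and $0 \to B_nC \to Z_nC \to H_nC \to 0$, decomposing $C$ as a direct sum of sphere complexes on homology and disk complexes on boundaries, each of which is visibly cofibrant from the generating (acyclic) cofibrations of the projective model structure; this is sound, including the routine compatibility of the degreewise splittings with the differential and the closure of cofibrant objects under retracts and arbitrary coproducts. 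Your argument is more self-contained (it needs only the generators, not Hovey's characterizations of cofibrations and cofibrant objects in $\operatorname{Ch}(R)$) and it exposes the underlying reason for the lemma, namely that every complex over a semisimple ring is formal, indeed split; the paper's truncation argument is shorter given the cited results and its filtration technique matches the transfinite-composition style used elsewhere in the paper. Both arguments use the full strength of semisimplicity (projectivity of all modules plus splitting of all injections), so neither is more general than the other.
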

\begin{proof}
For a given $n$, the map $\tau_{\geq n} C \to \tau_{\geq n-1} C$ is an isomorphism outside of degrees $n$ and $n-1$.
Thus the cokernel is bounded below; every $R$-module is projective so the cokernel is cofibrant by \cite[2.3.6]{hovey}. Moreover, every injection of $R$ modules splits, so the map is a dimensionwise split injection; hence, by \cite[2.3.9]{hovey}, $\tau_{\geq n} C \to \tau_{\geq n-1} C$ is a cofibration.

By \cite[2.3.6]{hovey}, $\tau_{\geq 0} C$ is cofibrant. 
Thus we have a sequence of cofibrations
\[
	0 \to \tau_{\geq 0} C \to \tau_{\geq -1} C \to \tau_{\geq -2} C \to \cdots
\]
and so $C = \colim\limits_{n\leq 0} \tau_{\geq n} C$ is cofibrant as well.
\end{proof}

\begin{remark}
	We have phrased everything above as a lifting of the model structure from $\M^{|S|}$.
	This is purely for convenience, to match the existing literature.
	One could instead regard $\opg$ as a kind of operad which is colored by the \emph{groupoid} $S$ (rather than the \emph{set} $|S|$) and most of the theory should still go through, lifting from the model structure on $\M^S$ (11.6.1) \cite{hirschhorn}.
	It seems that the benefits of doing so are minimal, as all maps in $\gpropm$ are already in $\M^S$ and the fibrations and weak equivalences in $\M^S$ are detected levelwise, just as in $\M^{|S|}$.
	We will thus usually assume we are working in the subcategory $\M^S$ rather than in $\M^{|S|}$ when it makes no difference.
\end{remark}

\section{Shrinkable pasting schemes}

\begin{convention}
The book \cite{jy2} is our general reference for graphs.  From this point forward, by a \emph{graph} we mean a strict isomorphism class \cite{jy2} (Def. 4.1) of a wheeled graph in the sense of \cite{jy2} (Def. 1.29).  Graph substitution of strict isomorphism classes of graphs is well-defined, strictly associative, and unital by \cite{jy2} (Lemma 5.31 and Theorem 5.32).
\end{convention}

Fix a non-empty set $\fC$ of colors once and for all.  As in Definition~\ref{def_pasting_scheme} all pasting schemes in this paper are connected and unital.    

\begin{figure}
	\includegraphics[height=1in]{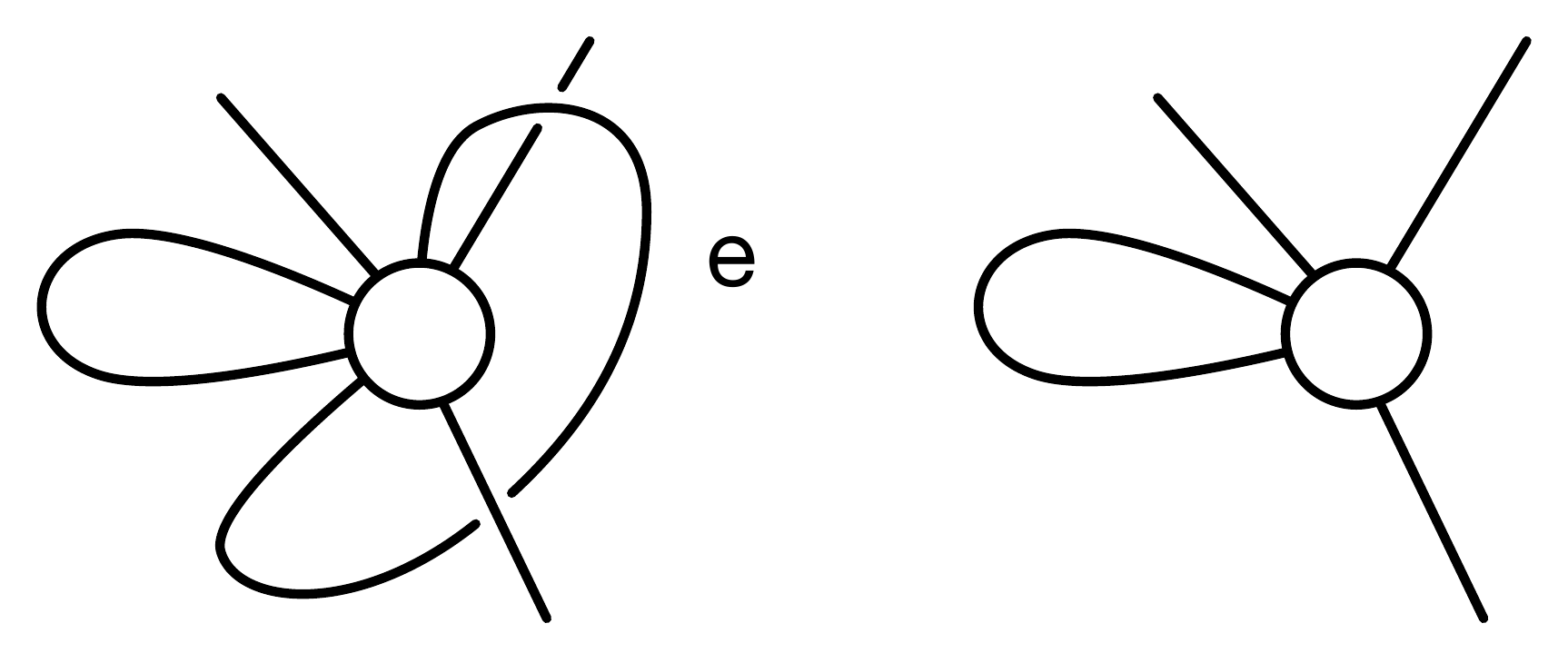}
	\caption{Shrinking away a loop $e$
	}\label{figure shrink loop}
\end{figure}

If $e$ is an internal edge in a graph $G$, then shrinking away $e$ results in a new graph $G'$. If $e$ is a loop, $G'$ is obtained by deleting $e$ from $G$ (Figure \ref{figure shrink loop}); otherwise we must first identify the two vertices of $e$ (Figure \ref{figure shrink edge}).
Note that shrinking an internal edge is not an operation which comes from graph substitution  (Example 6.9) \cite{jy2}.

\begin{figure}
	\includegraphics[height=1.8in]{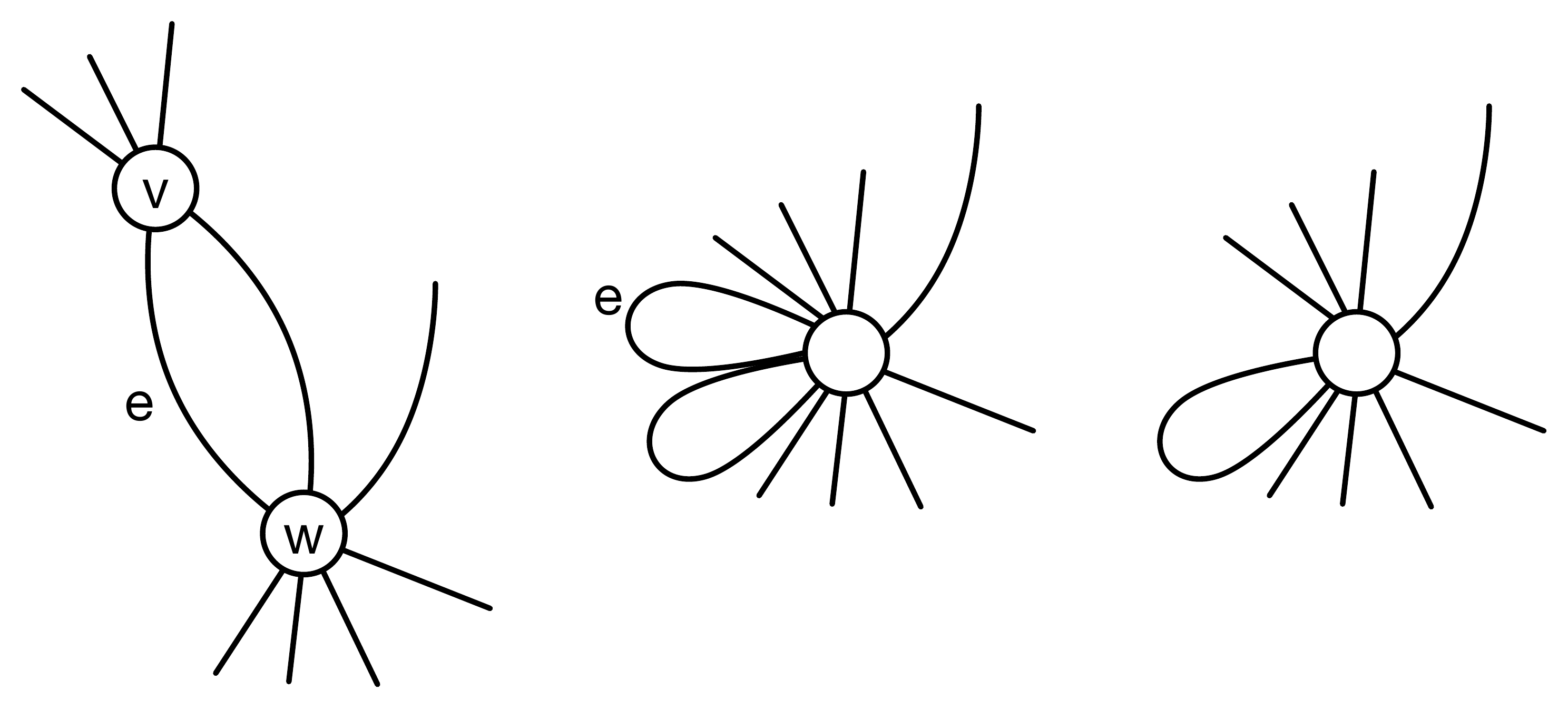}
	\caption{Shrinking away the internal edge $e$}\label{figure shrink edge}
\end{figure}

If $G \in \cG$ and $e$ is an ordinary internal edge of $G$, then shrinking $e$ may or may not give us a graph which is still in $\cG$; for example the chunk on the left in Figure \ref{figure shrink edge} may be part of a graph in $\gup$, while the chunk on the right cannot be.

\begin{definition}
\label{def:shrinkable}
Suppose given a $\fC$-colored pasting scheme $\graphgpd$, i.e., $\cG \leq \gwheelc (\fC)$ where $\gwheelc(\fC)$ is the collection all $\fC$-colored connected wheeled graphs.
Then $\cG$ is called \textbf{shrinkable} if it is closed under the operation of  shrinking an ordinary internal edge.
\end{definition}

The reader is referred to \cite{jy2} (Sec. 8.1) for notations regarding the following pasting schemes.

\begin{proposition}
\label{shrinkable-pasting-schemes}
The following $\fC$-colored pasting schemes are shrinkable.
\begin{enumerate}
\item
$\gwheelc (\fC) =$ connected wheeled pasting scheme (for wheeled properads)
\item
$\Treewheel (\fC) =$ wheeled tree pasting scheme (for wheeled operads)
\item
$\gupd (\fC) =$ simply-connected pasting scheme (for dioperads)
\item
$\UTree (\fC)=$ unital level trees pasting scheme (for operads)
\item
$\ULin (\fC) =$ unital linear pasting scheme (for small categories)
\end{enumerate}
\end{proposition}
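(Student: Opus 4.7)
The plan is to verify shrinkability case-by-case for each of the five pasting schemes listed. For each, I need to show that if $G \in \tg$ and $e$ is an ordinary internal edge of $G$, then the graph $G' = G/e$ obtained by shrinking $e$ still lies in $\tg$.

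First I would isolate two uniform observations that reduce the work in each case to a single graph-theoretic condition. The shrinking operation does not touch the input/output flags of $G$, so the input-output profile of $G'$ equals that of $G$; by hypothesis this profile lies in $S$, so the profile condition on $G'$ is automatic. Second, shrinking always preserves connectedness: if $e$ has distinct endpoints $u, v$, then contracting $e$ cannot disconnect a connected graph; if $e$ is a self-loop at $v$, then deleting $e$ leaves every walk intact. This already handles case (1), since $\gwheelc(\fC)$ consists of all connected wheeled graphs.

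Next I would split into two mechanical sub-cases: shrinking a non-loop edge (which identifies two distinct vertices and decreases both the vertex count and the internal edge count by one) and shrinking a self-loop (which deletes one edge and leaves the vertex set intact). For $\ULin(\fC)$ and $\UTree(\fC)$, no loops can occur, only the non-loop case arises, and contracting a non-loop edge preserves the ``path'' (resp.\ ``tree'') structure by a standard Euler-characteristic argument together with preservation of connectedness. For $\gupd(\fC)$, loops are again absent, and a standard lift-and-glue argument shows that shrinking a non-loop edge cannot create a cycle: any cycle in $G'$ lifts either to a cycle in $G$ or to a $u$--$v$ path in $G \setminus e$, which together with $e$ yields a cycle in $G$. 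For $\Treewheel(\fC)$, the essential observation is that every cycle in a wheeled tree is a self-loop; shrinking a non-loop edge cannot create a non-loop cycle by the same lift-and-glue argument, while shrinking a self-loop simply removes one wheel.

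The main obstacle I anticipate is bookkeeping rather than technique. First, I need to pin down from \cite{jy2} (Sec.~8.1) the precise graph-theoretic definition of each $\tg$ (in particular what constitutes a wheeled tree and a simply-connected wheeled graph), so that my case analysis genuinely covers $\tg$ and not something slightly larger. Second, after forming $G'$, I must specify listings at the merged vertex (in the non-loop case) so that $G'$ is a well-defined strict isomorphism class of graphs; since $\tg$ is replete and full, any valid choice of listing yields a member of $\tg$, so this step is routine. The wheeled tree case is the most delicate, as I must simultaneously track tree structure and wheel structure, but the lift-and-glue argument above handles both uniformly.
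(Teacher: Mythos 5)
Your overall strategy -- a case-by-case graph-theoretic verification of closure under shrinking -- is the same as the paper's, which disposes of (1)--(2) by noting that loops and directed cycles are permitted there, and of (3)--(5) by observing that all three are contained in $\gupd(\fC)$ and citing \cite{jy2} (Lemma 6.8) for that case. Your treatments of (1), (3), (4), (5) are correct: connectedness is preserved by contraction and loop deletion, the profile of $G/e$ equals that of $G$, the lift-and-glue argument shows that contracting a non-loop edge preserves simple connectivity (indeed it preserves the first Betti number of the underlying undirected graph, since $|E|$ and $|\Vt|$ each drop by one), and the remark that repleteness of $\tg$ under weak isomorphism absorbs the ambiguity in the listings of the merged vertex is exactly the right way to handle that issue.

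There is, however, a genuine error in case (2). The claim that ``every cycle in a wheeled tree is a self-loop'' is false: a wheeled tree is, roughly, a rooted tree to which at most one contraction has been applied, and contracting the root's output with an input of some leaf produces a directed cycle passing through \emph{every} vertex on the path from that leaf to the root. The simplest counterexample is the two-vertex wheel in which the output of $u$ feeds $v$ and the output of $v$ feeds $u$; this lies in $\Treewheel(\fC)$ and its unique cycle is not a loop. (If all wheels were self-loops, wheeled operads would carry no interesting trace structure.) Consequently the argument you build on this premise does not establish closure of $\Treewheel(\fC)$ under shrinking. The correct invariant to track is that a wheeled tree has first Betti number at most one (together with the single-output condition on vertices), which is preserved by contraction of a non-loop edge and strictly decreased by loop deletion; separately, one must check that deleting a loop at $u$ -- which removes $u$'s unique output flag and leaves a vertex with empty output profile -- still yields a graph in $\Treewheel(\fC)$, which requires the precise definition from \cite{jy2} (Sec.\ 8.1) that you deferred. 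So case (2), which you correctly identified as the delicate one, needs to be redone.
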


\begin{proof}
For the first two pasting schemes, since loops and other directed cycles are allowed, they are closed under deleting loops and shrinking an internal edge with distinct end vertices (which usually results in new loops).
The last three pasting schemes are all contained in the simply-connected pasting scheme $\gupd (\fC)$.  That $\gupd(\fC)$ is shrinkable is \cite{jy2} (Lemma 6.8).  That the smaller pasting schemes are also shrinkable follows from the same argument with minor change of terminology.
\end{proof}

\begin{example}
\label{gupc-not-shrinkable}
The $\fC$-colored connected wheel-free pasting scheme $\gupc$ (for properads) is not shrinkable.  For example, in the walnut graph \cite{jy2} (Example 1.41)
\begin{center}
\begin{tikzpicture}
\matrix[row sep=.5cm,column sep=2cm] {
\node [plain] (v) {$v$};\\
\node [plain] (u) {$u$};\\
};
\draw [arrow, bend left=60] (u) to (v);
\draw [arrow, bend right=60] (u) to (v);
\end{tikzpicture}
\end{center}
if either internal edge is shrunk, then the result has a loop
\begin{center}
\begin{tikzpicture}
\matrix[row sep=1cm,column sep=.8cm] {
\node [plain] (v) {$uv$};\\
};
\draw [arrow, out=45, in=-45, looseness=7] (v) to (v);
\end{tikzpicture}
\end{center}
which does not belong to $\gupc$ anymore.
\end{example}

\begin{definition}
\label{def:strict-pasting-scheme}
Suppose given a $\fC$-colored pasting scheme $\graphgpd$.
\begin{enumerate}
\item
A \textbf{marked graph} in $\cG$ is a pair $(G,\sds)$ with 
\begin{itemize}
\item
$G \in \cG$ and 
\item
$\varnothing \not= \sds \subseteq \vertex(G)$.  
\end{itemize}
We will also say $(G,\sds) \in \cG$ is a marked graph.
\item
Suppose $(G,\sds)$ is a marked graph in $\cG$.
\begin{itemize}
\item
An element in $\sds$ is called a \textbf{distinguished vertex}.  
\item
Vertices in the complement
\[
\sn(G,\sds) = \sn(G) \defn \vertex(G) \setminus \sds
\]
are called \textbf{normal vertices}.
\end{itemize}
\item
A \textbf{weak isomorphism} $f :(G, \sds) \to (G',\sds')$ between marked graphs is defined as a weak isomorphism $f : G \to G' \in \cG$ that induces a bijection $f : \sds \cong \sds'$ between the sets of distinguished vertices.
A \textbf{well marked graph} is a marked graph $(G,\sds)$ in which every flag in a distinguished vertex is part of an internal edge whose other end vertex is normal.
\item
A \textbf{reduced marked graph} is a well marked graph in which there are no internal edges with both end vertices normal. 
\end{enumerate}
\end{definition}

\begin{remark}
\label{reduced-means}
For a non-exceptional loop in a graph, the end vertices are equal.  Thus, a reduced marked graph is precisely a marked graph that satisfies the following conditions:
\begin{enumerate}
\item
Every internal edge has one normal end vertex and one distinguished end vertex.  Notice that this condition implies that there are no loops at any vertex.
\item
Every input or output leg of the graph is adjacent to a normal vertex.
\end{enumerate}
For simply connected pasting schemes, wellness means that every distinguished vertex is bounded on both sides by normal vertices.
\end{remark}

The following observation ensures that well marked graphs are closed under graph substitution.

\begin{proposition}
\label{well-marked-graphsub}
Suppose:
\begin{itemize}
\item
$\cG$ is a pasting scheme, and $K \in \cG$ with $\Vt(K) \not= \varnothing$.
\item
For each $v \in \Vt(K)$, $(G_v, \sds_v) \in \cG$ is a marked graph such that $G_v$ has the same input/output profiles as $v$.  
\end{itemize}
Then:
\begin{enumerate}
\item
The graph substitution $H = K(\{G_v\}) \in \cG$ is canonically a marked graph with set of distinguished vertices
\[
\sds_H \defn \coprod_{v \in K} \sds_v.
\]
\item 
If each $(G_v,\sds_v)$ is a well marked graph, then $(H,\sds_H)$ is also a well marked graph.  
\end{enumerate}
\end{proposition}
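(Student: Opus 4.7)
The plan is to reduce both claims to basic structural properties of graph substitution, since the closure of $\cG$ under graph substitution is built into the definition of a pasting scheme, and the remaining content is purely combinatorial.

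For part (1), I would first invoke the pasting scheme axiom that $\cG$ is closed under graph substitution to conclude $H = K(\{G_v\}) \in \cG$. Then I would use the basic fact about graph substitution (from \cite{jy2}) that there is a canonical decomposition
\[
\Vt(H) \;=\; \coprod_{v \in \Vt(K)} \Vt(G_v),
\]
so that $\sds_H \defn \coprod_v \sds_v$ is a well-defined subset of $\Vt(H)$. It is non-empty because each $\sds_v$ is non-empty by the definition of a marked graph.

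For part (2), I would fix an arbitrary distinguished vertex $d$ of $H$; by construction $d$ lies in $\sds_v$ for a unique $v \in \Vt(K)$, and I must check that every flag at $d$ in $H$ is part of an internal edge of $H$ whose other end vertex is normal in $H$. Here I would rely on two structural observations about graph substitution:
\begin{enumerate}
\item[(a)] every internal edge of $G_v$ remains an internal edge of $H$ with the same two end vertices, because graph substitution modifies $G_v$ only by gluing its \emph{legs} to the flags of $v \in K$; and
\item[(b)] the flags at $d$ inside $H$ coincide with the flags at $d$ inside $G_v$, since $d$ is an interior vertex of the substituted piece $G_v$ and cannot acquire or lose flags during the gluing process.
\end{enumerate}
Granted (a) and (b), well-markedness of $(G_v, \sds_v)$ gives, for each flag $f$ at $d$, an internal edge $e$ of $G_v$ containing $f$ whose other end vertex $w$ is normal in $G_v$. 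By (a), $e$ is internal in $H$, and because $w \in \sn(G_v) \subseteq \sn(H)$ (normal vertices inside $G_v$ stay normal in the disjoint union defining $\sds_H$), the vertex $w$ remains normal in $H$. This verifies the well-markedness condition at $d$, and since $d$ was arbitrary, $(H, \sds_H)$ is well marked.

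The only genuine content is observation (b), which is the one place one has to be careful: one must verify that a distinguished vertex of $G_v$ does not get its set of incident flags altered by the substitution at $v \in K$. This is where the assumption that $(G_v, \sds_v)$ is well marked is essential, since in a well marked graph every flag at a distinguished vertex is internal, and internal flags of $G_v$ are never involved in the identification with the flags of $v$. Once this observation is in place, the rest of the proof is bookkeeping at the level of vertex sets. I expect no difficulty beyond citing the relevant combinatorial lemmas about graph substitution from \cite{jy2}.
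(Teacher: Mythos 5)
Your proposal is correct and follows essentially the same route as the paper's proof: part (1) from closure under graph substitution plus the vertex decomposition $\Vt(H)=\coprod_v \Vt(G_v)$, and part (2) by tracing each flag at a distinguished vertex to an internal edge already present in its $G_v$, whose normal other end stays normal in $H$ via $\sn(H,\sds_H)=\coprod_v \sn(G_v,\sds_v)$. Your observation (b) just makes explicit what the paper leaves implicit in the phrase ``hence an internal edge in $H$.''
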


\begin{proof}
The pair $(H,\sds_H)$ is a marked graph because $\Vt(K) \not= \varnothing$ and each $\sds_v \not= \varnothing$.  For the second assertion, to see that it is a well marked graph, first note that a distinguished vertex $w \in \sds_H$ must be a distinguished vertex in some unique $G_u$.  Since $G_u$ is a well marked graph, every flag in $w$ is part of an internal edge in $G_u$, hence an internal edge in $H$, whose other end vertex is normal in $G_u$, hence also normal in $H$.  In this last sentence, we used the equalities
\[
\sn(H,\sds_H) = \Vt(H) \setminus \sds_H = \coprod_{v \in K} \Vt(G_v) \setminus \coprod_{v \in K} \sds_v = \coprod_{v \in K} \sn(G_v,\sds_v)
\]
to identify the normal vertices in $H$ with those in the various $G_v$'s.
\end{proof}

\begin{remark}
\label{rk:may-not-reduce}
One must be careful that, in the context of the previous proposition, even if each $(G_v,\sds_v)$ is reduced, it does \emph{not} follow in general that $(H,\sds_H)$ is reduced.  In forming the graph substitution $H = K(\{G_v\})$, there are usually some internal edges that are not internal edges in any $G_v$.  These new internal edges come from  the legs of the $G_v$'s that are connected in $H$ by some internal edge in $K$.  So such a new internal edge in $H$ may connect a normal vertex $w_u$ in some $G_u$ with a normal vertex $w_v$ in some $G_v$, where $G_u = G_v$ and even $w_u = w_v$ are allowed if the corresponding internal edge in $K$ is a loop at $v$.  In particular, $H$ may not be reduced, although by the previous proposition it must be well marked.  It does have a unique reduction up to weak isomorphism, as we will see in Corollary \ref{graph-sub-reduced}.
\end{remark}

A pasting scheme $\cG$ is shrinkable if and only if, for each internal edge $e$ in an ordinary graph $G \in \cG$, the wheeled graph $G/e$ obtained from $G$ by shrinking away the internal edge $e$ is still in $\cG$.  In forming $G/e$, the two flags that make up $e$ are removed and the vertices to which they belong are redefined as a single vertex. When $e$ is a loop at a vertex, $G/e$ means $G$ with $e$ deleted.  The rest of the graph structures in $G/e$ is inherited from $G$.  In particular, when $e$ is an internal edge that is not a loop, the new combined vertex inherits the dioperadic listings from the two original end vertices of $e$ as in \cite{jy2} (2.4.2).

The operations of shrinking two internal edges in a given graph--depending on the order in which they are shrunk--is well-defined only up to weak isomorphism.  One can see this from, for example, \cite{jy2} (Lemma 6.106).  In trying to shrink two internal edges from two distinct vertices going into a third vertex, the outgoing listing of the combined vertex may need to be corrected with a block permutation, depending on the order in which the internal edges are shrunk.  This is why marked graphs are considered with \emph{weak isomorphisms} preserving the distinguished vertices.  

In general, if $E \subseteq \Edge(G)$ is a non-empty subset of ordinary internal edges, then $G/E$--that is, $G$ with all $e \in E$ shrunk--is uniquely defined up to vertex listings, but its graph listing must be that of $G$.  In other words, $G/E$ is the result of removing all the flags corresponding to all $e \in E$, combining all the affected vertices connected to each other into a single vertex, and taking as much graph structure from $G$ as possible.  The only structure that is not uniquely defined in such a $G/E$ is the set of vertex listings for the newly formed vertices.   Given any such choice of $G/E$, there is a unique graph substitution decomposition
\begin{equation}
\label{g-over-e}
G = (G/E)\bigl(\{H\}\bigr)
\end{equation}
in which the internal edges in the $H$'s form precisely the set $E$.  Choosing a different representative of $G/E$ (by changing some listings of the newly formed vertices) can only change the graph listings, but \emph{not} vertex listings, of the $H$'s.  So given $E \subseteq \Edge(G)$, the $H$'s are uniquely defined up to graph listings.  This is essentially explained in \cite{jy2} (Lemma 6.8), although that was stated for simply-connected graphs.

In the setting of the decomposition \eqref{g-over-e}, for $A \in \Mtos$, each object
\[
A(H) \defn \bigotimes_{v \in H} A\profv
\]
is well defined because each $H$ is unique up to graph listings and $A(H)$ only uses the vertex listings in $H$.

The following observation ensures that one can go from a well marked graph to a reduced marked graph uniquely.

\begin{proposition}
\label{unique-reduction}
Suppose:
\begin{itemize}
\item
$\cG$ is a shrinkable pasting scheme (Def. \ref{def:shrinkable}).
\item
$(G,\sds) \in \cG$ is a well marked graph.
\item
$E \subseteq \Edge(G)$ is the subset of all the internal edges with both end vertices normal.  
\end{itemize}
Then there is a unique weak isomorphism class of reduced marked graphs $[(G/E,\sds)]$ in which $G/E$ is obtained from $G$ by shrinking all $e \in E$.
\end{proposition}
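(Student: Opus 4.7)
The plan is to realize $G/E$ by shrinking the edges of $E$ one at a time, verify that the result is a reduced marked graph in $\cG$, and then appeal to the description of $G/E$ given just before the statement to establish independence of the choice of ordering up to weak isomorphism.

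First I would pick any enumeration $E=\{e_1,\dots,e_k\}$ and inductively set $G_0=G$ and $G_i=G_{i-1}/e_i$. Since each $e_i$ has two (possibly coinciding) end vertices in $G_{i-1}$, it is an ordinary internal edge there, so $G_i\in\cG$ by the shrinkability hypothesis. Throughout this inductive process the set $\sds$ of distinguished vertices is untouched: by the definition of $E$, each shrink either merges two normal vertices into a single normal vertex (if $e_i$ is not a loop) or simply deletes a loop at a normal vertex, so $\sds$ embeds canonically into $\vertex(G_i)$ and the flags adjacent to $\sds$ remain unchanged. Consequently $(G_i,\sds)$ is well marked for every $i$, because the star of every distinguished vertex is identical to its star in $G$.

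Next I would check that $(G_k,\sds)$ is reduced by a simple bookkeeping argument. Every internal edge of $G_k$ is the image of an internal edge of $G$ not belonging to $E$, hence had at least one distinguished end vertex in $G$; that vertex is still distinguished in $G_k$, so the edge has a distinguished end in $G_k$. Combined with well-markedness, Remark~\ref{reduced-means} then yields that $(G_k,\sds)\in\cG$ is a reduced marked graph, which I call $G/E$.

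For uniqueness, the discussion surrounding \eqref{g-over-e} establishes that the flag set, graph listing, and partition of flags into vertices of $G/E$ are uniquely determined by the data $(G,E)$; the only remaining ambiguity is the choice of vertex listings on the newly formed merged vertices, all of which are normal. Any two such choices of listings are therefore related by a bijection preserving the coloring, the graph structure, and the distinguished subset $\sds$, i.e., by a weak isomorphism of marked graphs in the sense of Definition~\ref{def:strict-pasting-scheme}. Hence the weak isomorphism class $[(G/E,\sds)]$ is independent of the enumeration.

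The main subtlety to watch for is the loop-creation phenomenon highlighted in Remark~\ref{rk:may-not-reduce}: an intermediate shrink may turn a previously non-loop edge $e_j$ into a loop at a merged normal vertex, and one must ensure that this loop is still ordinary and still in (the image of) $E$ so that a subsequent step shrinks it. Both points follow tautologically from the defining property of $E$, since the two ends of $e_j$ being normal is a property unaffected by shrinking other edges whose ends are normal.
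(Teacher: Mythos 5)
Your proposal is correct and follows essentially the same route as the paper's proof: both rest on the discussion surrounding \eqref{g-over-e} for existence and uniqueness up to vertex listings, and both verify well-markedness and reducedness by observing that distinguished vertices and their stars are untouched while normal vertices only merge with normal vertices. Your additional edge-by-edge induction makes explicit the use of shrinkability to keep each intermediate graph in $\cG$ (including the loop-creation point from Remark \ref{rk:may-not-reduce}), which the paper leaves implicit.
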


\begin{proof}
The existence and uniqueness of $[(G/E,\sds)]$ was given in \eqref{g-over-e}.  It has the same set of distinguished vertices because, in shrinking the internal edges in $E$, no distinguished vertices are affected.  

Next observe that $(G/E,\sds)$ is a well marked graph.  Indeed, a flag $f$ in a distinguished vertex $w$ in $G/E$ must still be part of an internal edge because it was so in $G$ and it is not shrunk in forming $G/E$.  Moreover, the other end vertex of $f$ in $G$ is a normal vertex, which is either unaffected in passing to $G/E$ or is combined with some other normal vertices in $G$ to form a normal vertex in $G/E$.  In any case, the other end vertex of $f$ in $G/E$ is a normal vertex.

Finally, to see that $(G/E,\sds)$ is reduced, note that normal vertices in $G/E$ come from those in $G$ as discussed in the previous paragraph.  So $G/E$ cannot have any internal edge with both end vertices normal because $E$ is by definition the set of \emph{all} the internal edges in $G$ with both end vertices normal.
\end{proof}

\begin{corollary}
\label{graph-sub-reduced}
Suppose:
\begin{itemize}
\item
$\cG$ is a shrinkable pasting scheme, and $K \in \cG$ with $\Vt(K) \not= \varnothing$.
\item
For each $v \in \Vt(K)$, $(G_v, \sds_v) \in \cG$ is a well marked graph (e.g., reduced marked graph) such that $G_v$ has the same input/output profiles as $v$.  
\item
$\bigl(H = K(\{G_v\}),\sds_H\bigr)$ is the marked graph in Proposition \ref{well-marked-graphsub}.
\end{itemize}
Then there is a unique weak isomorphism class of reduced marked graphs $[(H',\sds_H)]$ in which $H'$ is obtained from $H$ by shrinking all the internal edges with both end vertices normal.
\end{corollary}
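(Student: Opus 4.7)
The plan is to obtain this as an immediate consequence of Proposition \ref{well-marked-graphsub} and Proposition \ref{unique-reduction} applied in sequence, so there is essentially no new content to verify beyond confirming that the hypotheses of the latter are satisfied by the output of the former.

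First I would invoke Proposition \ref{well-marked-graphsub}, whose hypotheses match ours: $K \in \cG$ with $\Vt(K) \not= \varnothing$, and each $(G_v, \sds_v)$ is a well marked graph in $\cG$ (since reduced marked graphs are in particular well marked) with $G_v$ having the same input/output profiles as $v$. The proposition then yields that $H = K(\{G_v\})$ lies in $\cG$ and that $(H, \sds_H)$ with $\sds_H = \coprod_{v \in K} \sds_v$ is a well marked graph.

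Next I would apply Proposition \ref{unique-reduction} to the well marked graph $(H,\sds_H) \in \cG$, using the hypothesis that $\cG$ is shrinkable. Let $E \subseteq \Edge(H)$ be the subset of all internal edges in $H$ having both end vertices normal. Proposition \ref{unique-reduction} then produces a unique weak isomorphism class $[(H/E, \sds_H)]$ of reduced marked graphs, where $H/E$ is obtained by shrinking all $e \in E$. Setting $H' = H/E$ gives the conclusion.

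The only potentially subtle point (and the main thing to check) is that $E$ is genuinely the right set of edges to shrink, i.e., that ``internal edges with both end vertices normal'' is unambiguous in $H$. This follows from Remark \ref{rk:may-not-reduce}, which describes the internal edges of $H$ as falling into two types (those internal to some $G_v$, and new ones created by edges of $K$ joining legs of $G_v$'s) and identifies the normal/distinguished status of their end vertices via the identification $\sn(H,\sds_H) = \coprod_{v \in K} \sn(G_v,\sds_v)$ from the proof of Proposition \ref{well-marked-graphsub}. With $E$ thus well-defined as a subset of $\Edge(H)$, no further argument is needed and the corollary follows.
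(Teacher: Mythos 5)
Your proof is correct and follows exactly the paper's argument: apply Proposition \ref{well-marked-graphsub} to see that $(H,\sds_H)$ is well marked, then apply Proposition \ref{unique-reduction} to obtain its unique reduction up to weak isomorphism. The extra remark about the well-definedness of $E$ is a harmless elaboration of what the paper leaves implicit.
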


\begin{proof}
By Proposition \ref{well-marked-graphsub} $(H,\sds_H)$ is a well marked graph.  It has a unique reduction up to weak isomorphism by Proposition \ref{unique-reduction}.
\end{proof}

\section{Relative left properness}

In this section, we show that, if $\cG$ is a shrinkable pasting scheme admissible in $\M$ (Definition~\ref{gprop-model-operad}) and $\M$ is nice enough (Definition~\ref{def_especially_nice}), then the model category structure on $\gprop$ in Corollary~\ref{gprop-model-operad} satisfies a property close to that of left properness, which we will refer to as \textbf{relative} left properness.

Fix a  $\fC$-colored pasting scheme $\graphgpd$ for some non-empty set $\fC$ of colors and a bicomplete symmetric monoidal closed category $(\M, \otimes, \tensorunit)$.

\subsection{Vertex decoration}

\begin{definition}\label{def_ang}
Suppose $(G,\sds) \in \cG$ is a marked graph, and $A \in \M^{S}$ \eqref{m-tothe-s-again}. 
\begin{enumerate}
\item
For $u \in \vertex(G)$, write $A(u)$ for the entry of $A$ corresponding to the profiles of $u$.  In other words, if $u$ has profile $\dch \in S$, then
\[
A(u) = A\dc \in \M.
\]
\item
Define the object
\[
A\bigl(\sn(G)\bigr) 
= A\bigl(\sn(G,\sds)\bigr) 
= \bigotimes_{u \in \sn(G,\sds)} A(u) \in \M,
\]
where $\sn(G,\sds) = \vertex(G) \setminus \sds$ is the set of normal vertices.
\item
Write $[(G,\sds)]$ for the isomorphism class of $(G,\sds)$ in $\cG$.
\end{enumerate}
\end{definition}

\subsection{The pushout filtration}
If $H\to G$ is a homomorphism of groups, then the restriction functor
$\M^G \to \M^H$
has a left adjoint, called \textbf{induction}, 
\[
	G \underset{H}{\centerdot} (-) : \M^H \to \M^G. 
\]
Restriction and induction are actually a Quillen pair; see \cite{bm06} (2.5.1).
The following definition appears in \cite{em06} (Sec. 12) and \cite{harper-jpaa} (7.10).

\begin{definition}[$Q$-Construction]
\label{one-colored-q}
Suppose there is a map $i : X \to Y \in \calm$, and $0 \leq q \leq t$.  The object $Q^{t}_q = Q^t_q(i) \in \calm^{\Sigma_t}$ is defined as follows.
\begin{itemize}
\item
$Q^{t}_0 = X^{\otimes t}$.
\item
$Q^{t}_t = Y^{\otimes t}$.
\item
For $0 < q < t$ there is a pushout in $\calm^{\Sigma_t}$:
\begin{equation}
\label{inductive-q-one-colored}
\nicexy{
\Sigma_t
\underset{\Sigma_{t-q} \times \Sigma_q}{\centerdot} 
\left[ X^{\otimes (t-q)} 
\otimes Q_{q-1}^{q}\right] 
\ar[d]_-{(\id,i^{\boxprod q})} \ar[r] 
&
Q^{t}_{q-1} \ar[d]
\\
\Sigma_t
\underset{\Sigma_{t-q} \times \Sigma_q}{\centerdot} 
\left[ X^{\otimes (t-q)} 
\otimes 
Y^{\otimes q}\right]
\ar[r] 
&
Q^{t}_q.
}
\end{equation}
\end{itemize}
Write $i^{\boxprod t}$ for the natural map $Q^t_{t-1} \to Y^{\otimes t}$.  It is an iterated pushout product of $i$.
\end{definition}

Recall the definition of a shrinkable pasting scheme from Definition \ref{def:shrinkable}.  The following filtration is completely categorical and requires no model category structure.

\begin{lemma}
\label{opc-jt}
Suppose:
\begin{itemize}
\item
$\cG$ is a shrinkable pasting scheme, $A \in \gprop$, and
\item
$i : X \to Y \in \calm$, regarded as a map in $\M^S$ concentrated in the $s$-entry for some $s \in S$.
\item
The diagram
\begin{equation}
\label{gprop-pushout-free-map}
\nicexy{
\opg \comp X \ar[d]_{i_*} \ar[r]^-{f} 
& A \ar[d]^-{h}
\\
\opg \comp Y \ar[r]
& A_{\infty}
}
\end{equation}
is a pushout in $\gprop$.
\item
$[r] = \smallbinom{[\ud]}{[\uc]} \in S$ is an orbit.
\end{itemize}
Then the $[r]$-entry of the map $h$ is a countable composition
\begin{equation}
\label{pushout-filtration-gprop}
\nicexy{
A_0([r]) \ar[r]^-{h_1}
& A_1([r]) \ar[r]^-{h_2}
& A_2([r]) \ar[r]^-{h_3}
& \cdots \ar[r]
& \colim_k A_k([r]) \ar[d]^-{\cong}
\\
A([r]) \ar@{=}[u] &&&& A_\infty([r])
}
\end{equation}
in $\Mtor = \M^{\sigmabrcopsigmabrd}$, where for $k \geq 1$ the maps $h_k$ are inductively defined as the pushout 
\begin{equation}
\label{opg-hk-pushout}
\nicexy@R+10pt{
\coprod\limits_{[(G,\sds)]}
\sigmabrr \dotover{\Aut(G,\sds)} 
\Bigl\{\ang \otimes Q^k_{k-1}\Bigr\} 
\ar[d]_-{\amalg (\Id \otimes i^{\boxprod k})_*} 
\ar[r]^-{f^{k-1}_*} 
& 
A_{k-1}([r]) \ar[d]^-{h_k}
\\
\coprod\limits_{[(G,\sds)]}
\sigmabrr \dotover{\Aut(G,\sds)} 
\Bigl\{\ang \otimes Y^{\otimes k}\Bigr\} 
\ar[r]^-{\xi_{k}} 
& 
A_k([r])
}
\end{equation}
in $\Mtor = \M^{\sigmabrcopsigmabrd}$.  In this pushout:
\begin{enumerate}
\item
The coproducts are indexed by the set of weak isomorphism classes $[(G,\sds)]$ of \underline{reduced} marked graphs such that:
\begin{itemize}
\item
the input/output profile of $G$ is in the orbit $[r]$; 
\item
$\sds$ consists of $k$ vertices, all with profiles in the orbit $[s]$.
\end{itemize}
\item
The top horizontal map $f^{k-1}_*$ is induced by $f$ and the $\cG$-prop structure maps of $A$ \cite{jy2} (Lemma 10.40).
\item
$\sigmabrr = \sigmabrcopsigmabrd$.
\end{enumerate}
\end{lemma}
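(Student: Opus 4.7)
The plan is to follow the standard pushout filtration approach for algebras over operads (as in Harper, Elmendorf--Mandell~\cite{em06}, and Berger--Moerdijk~\cite{bm07}), adapted to $\cG$-props via the identification $\gprop \cong \alg(\opg)$ from Lemma \ref{gprops-are-algebras}. First, I would realize the pushout $A_\infty$ as a reflexive coequalizer in $\M^S$ of the form $\opg \comp (\opg \comp X \amalg A) \rightrightarrows \opg \comp X \amalg A$, where one map uses the prop structure of $A$ together with $f$ and the other is the monad multiplication. The key combinatorial observation is that, for each orbit $[r] \in S$, the underlying $\M^S$-object of $A_\infty$ is a colimit indexed by (isomorphism classes of) graphs $G \in \cG$ with input/output profile in $[r]$, whose vertices are decorated either by $Y$ (when the vertex has profile in $[s]$) or by the relevant entry of $A$, subject to the coequalizer relations induced by the prop structure of $A$ and the map $f$.

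Second, I would use shrinkability to normalize these decorations. Any ordinary internal edge whose two endpoints are both $A$-decorated can be shrunk using the $\cG$-prop structure maps of $A$, so modulo the coequalizer relation we may assume no such edge exists. Similarly, by using the input/output extensions in Definition \ref{def_pasting_scheme}, any leg of the graph may be assumed to be attached to an $A$-decorated vertex. These two conditions are precisely the definition of a reduced marked graph in Definition \ref{def:strict-pasting-scheme}, with $\sds$ the set of $Y$-decorated vertices. Uniqueness of reduction, which is Proposition \ref{unique-reduction} and Corollary \ref{graph-sub-reduced}, guarantees that the indexing set for the colimit is exactly the set of weak isomorphism classes $[(G,\sds)]$ described in the lemma, and this is where the shrinkability hypothesis on $\cG$ is essential; without it, the normalization step would leave the pasting scheme (cf.\ Example \ref{gupc-not-shrinkable}).

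Third, I would define $A_k([r])$ as the subobject of $A_\infty([r])$ generated by reduced marked graph summands with $|\sds| \leq k$. Thus $A_0([r]) = A([r])$ (graphs with no $Y$-decorated vertex collapse under shrinking to a single $A$-decorated corolla) and $\colim_k A_k([r]) \cong A_\infty([r])$ by construction. To identify the successor map $h_k$ with the pushout \eqref{opg-hk-pushout}, I would analyze what is newly contributed at stage $k$: reduced marked graphs $(G,\sds)$ with $|\sds|=k$, decorated by $A$ on $\sn(G)$ and by $Y^{\otimes k}$ across $\sds$, modulo the $\Aut(G,\sds)$-action and $[\ur]$-equivariantly. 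The boundary of such a $Y^{\otimes k}$-decoration---where at least one of the $k$ copies of $Y$ is pulled back to $X$ along $i$---already lies in $A_{k-1}$ because the map $f$ together with the prop structure of $A$ absorbs an $X$-decorated vertex into the adjacent $A$-decoration, strictly reducing $|\sds|$ after re-reducing. This boundary is encoded precisely by $Q^k_{k-1}$ from Definition \ref{one-colored-q}, and inducing up from $\Aut(G,\sds)$ to $\sigmabrr$ with the summands $\ang \otimes (-)$ yields the pushout square as displayed.

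The main obstacle is the combinatorial bookkeeping in the third step: carefully matching the boundary cells $Q^k_{k-1}$ with the faces of the colimit that factor through $A_{k-1}$, and tracking the $\Aut(G,\sds)$-action and induction to $\sigmabrr$ correctly. In particular one must verify that distinct reduced marked graph classes $[(G,\sds)]$ with $|\sds|=k$ contribute disjoint summands at stage $k$, and that \eqref{opg-hk-pushout} correctly identifies the attaching map of each such summand onto $A_{k-1}$. The uniqueness of reductions in Corollary \ref{graph-sub-reduced} is what makes the coproduct on the left of \eqref{opg-hk-pushout} well-defined and disjoint.
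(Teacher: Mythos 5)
Your plan is correct and would yield the lemma, but it runs in the opposite direction from the paper's proof. You present $A_\infty$ explicitly via the monadic (reflexive coequalizer) description of the pushout of $\opg$-algebras, identify its underlying $\M^S$-object as a colimit over decorated graphs, use shrinkability plus the input/output extensions to normalize every summand to a reduced marked graph, and then filter by the number $|\sds|$ of $Y$-decorated vertices, reading off the pushout \eqref{opg-hk-pushout} as the attaching data at stage $k$. The paper instead takes the squares \eqref{opg-hk-pushout} as the \emph{definition} of the $A_k([r])$, sets $B([r]) = \colim_k A_k([r])$, uses Corollary \ref{graph-sub-reduced} (graph substitution followed by unique reduction) to endow $B$ with a canonical $\cG$-prop structure, constructs the maps $A \to B$ and $Y \to B$ (the latter via $\xi_1$ on the input/output extension of the $s$-corolla together with the colored units $\mathbb{1} \to A(c;c)$), and then verifies the universal property of the pushout directly. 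Your route makes the indexing by reduced marked graphs and the role of $Q^k_{k-1}$ as the ``boundary'' more transparent, at the cost of the heavier bookkeeping you acknowledge in your third step; the paper's route trades that for the (also nontrivial, and largely left implicit) verification that $B$ is a $\cG$-prop and is initial among candidates. Both hinge on the same combinatorial core: unique reduction of well marked graphs and the input/output extensions. One small correction: your coequalizer presentation is garbled --- the pushout $A_\infty$ is the coequalizer in algebras of $\opg \comp X \rightrightarrows A \amalg (\opg \comp Y)$, and it is the algebra coproduct $A \amalg (\opg \comp Y)$ that admits the presentation $\opg \comp (UA \amalg Y)^{\cdot 2} \rightrightarrows \opg \comp (UA \amalg Y)$; your formula has $X$ where $Y$ should appear. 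This is only a notational slip and does not affect the viability of the argument.
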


\begin{proof}
Define
\[
B([r]) = \colim_k A_k([r]) \in \Mtor.
\]
Corollary \ref{graph-sub-reduced} and the $\cG$-prop structure maps of $A$ imply that $B$ has a canonical $\cG$-prop structure together with a $\cG$-prop map $A \to B$ induced by $A_0 \to B$.  The map $Y \to B$ is induced by:
\begin{itemize}
\item
$\xi_1$ using the input and output extension \cite{jy2} (6.10 and 6.11) of the $s$-corolla whose only vertex is distinguished;
\item
the maps $\mathbb{1} \rightarrow A(c;c)$, where $c$ ranges over all colors of $A$;
\item
the natural map $A_1 \to B$.  
\end{itemize}
That $B$ is the pushout $A_\infty$ follows from its inductive definition.
\end{proof}

\begin{remark}
Suppose $C  = C_{(\uc;\ud)}$ is a corolla with the indicated profiles \cite{jy2} (1.31) and with unique vertex $w$.  Then its input and output extension mentioned in the previous proof is the spider graph:
\begin{center}
\begin{tikzpicture}
\matrix[row sep=-.1cm,column sep=.8cm] {
\node [plain] (x1) {$x_1$}; 
&& \node [plain] (xn) {$x_n$}; \\
& \node [plain, label=above:.. \small{$\ud$} .., label=below:.. \small{$\uc$} ..] (w) {$w$}; &\\
\node [plain] (v1) {$v_1$};
&& \node [plain] (vm) {$v_m$};\\
};
\draw [arrow] (v1) to (w);
\draw [arrow] (vm) to (w);
\draw [inputleg] (v1) to node[below=.2cm]{$c_1$} +(0,-.8cm);
\draw [inputleg] (vm) to node[below=.2cm]{$c_m$} +(0,-.8cm);
\draw [outputleg] (x1) to node[above=.2cm]{$d_1$} +(0,.8cm);
\draw [outputleg] (xn) to node[above=.2cm]{$d_n$} +(0,.8cm);
\draw [arrow] (w) to (x1);
\draw [arrow] (w) to (xn);
\end{tikzpicture}
\end{center}
It is obtained from $C$ by attaching to every leg a $1$-input, $1$-output corolla with equal flag colors.  In the previous proof, $w$ is distinguished with profiles $s = (\uc;\ud)$, and the newly attached vertices $v_i$'s and $ x_j$'s are all normal vertices to be used with the colored units of $A$.
\end{remark}

\begin{definition}
\label{def:sigma-cof}
Suppose $\M$ is a cofibrantly generated model category, and $\graphgpd$ is a pasting scheme.
\begin{enumerate}
\item
An object $A \in \M^S$ is called \textbf{$\sigmag$-cofibrant} in $\M$ if $A$ is cofibrant in $\Mtos$.
\item
A map in $\Mtos$ is called a \textbf{$\sigmag$-cofibration} in $\M$ if it is a cofibration in $\Mtos$.  
\item
If $C$ is a small category, a \textbf{$C$-cofibration} (resp., a \textbf{$C$-cofibrant object}) is a cofibration (resp., a cofibrant object) in the diagram category $\M^C$ with the projective model structure \cite{hirschhorn} (11.6.1).
\end{enumerate}
\end{definition}

\begin{remark}
Recall that there is a decomposition \eqref{m-tothe-s-again}
\[
\Mtos \cong 
\prod_{([\uc];[\ud]) \in S} \M^{\sigmabrcopsigmabrd}.
\]
So $\sigmag$-cofibrant / cofibration means the $[r]$-entry in $\Mtor$ is $\sigmabrr$-cofibrant / cofibration as $[r]$ runs through all the orbits in $S$. 
\end{remark}

\begin{definition}\label{def_especially_nice}
Let $\M$ be a cofibrantly generated monoidal model category and $\cG$ be a shrinkable pasting scheme. We say that $(\M,\cG)$ is a \textbf{compatible pair} if $\cG$ is admissible in $\M$ (Definition~\ref{gprop-model-operad}), every object of $\M$ is cofibrant and, for every well-marked, reduced $(G,\sds)$, the object $\ang$ is $\Aut(G,\sds)$-cofibrant whenever $A$ is $\Sigma_{\cG}$-cofibrant. 

Equivalently, we say that the model category $\M$ is compatible with $\cG$ or the pasting scheme $\cG$ is compatible with $\M$. 
\end{definition}

\begin{example}\label{example_especially_nice}
A sufficient condition for a model category $\M$ to be compatible with every shrinkable pasting scheme $\cG$ is that if $G$ is a finite group, then every object of $\M^{G}$ is cofibrant in the projective model structure. Examples of model categories satisfying this property include: 
\begin{itemize} 
\item The category of unbounded chain complexes $\textnormal{Ch}({k})$ over $k$ when $k$ is a field of characteristic zero with the projective model structure~\cite[2.3.11]{hovey}.
Every object of $\operatorname{Ch}(k[G])$ is cofibrant by Lemma \ref{lemma semisimple} since $k[G]$ is semisimple \cite[XVIII.1.2]{lang}.
In the same way, one sees that the category of non-negatively graded chain complexes satisfies this property.
\item The category of simplicial $k$-modules, again for $k$ of characteristic zero.
\item Quillen's categories of reduced rational simplicial (or $dg$) Lie algebras~\cite[II.5]{quillen}. 
\end{itemize}

Model category structures which are not compatible with all $\cG$ include the category of simplicial sets and simplicial abelian groups (See Section~\ref{appendix}). 
\end{example} 

\begin{remark} 
If $\M$ is a cofibrantly generated monoidal model category in which every object is cofibrant, then $\M$ is compatible   with the pasting scheme $\UTree$, for operads. In particular, $\M=\sset$ is compatible with $\UTree$ as shown in \cite[Definition 2.6.6; Theorem 3.1.10]{hryoperads}. 
\end{remark} 

\begin{remark}\label{remark_every_sigma_cof}

If $\M$ is a model category which satisfies the condition in Example~\ref{example_especially_nice}, then every object in $\gprop$ is $\sigmag$-cofibrant, but not necessarily cofibrant. In particular, when working in non-negatively graded chain complexes over a field of characteristic zero, every operad is $\sigmag$-cofibrant but there exist many examples which are not cofibrant, such as the associative operad $\mathbb{A}$ and the commutative operad $\mathbb{C}$. 

\end{remark}

\begin{proposition}
\label{sigmacof-closure}
Suppose that $(\M,\cG)$ is a compatible pair and that 
\begin{itemize}

\item
$i : X \to Y$ is a cofibration in $\calm$, regarded as a map in $\M^S$ concentrated at the $s$-entry for some $s \in S$.  
\item $A \in \gprop$ is $\sigmag$-cofibrant. 
\item
The diagram
\begin{equation}
\label{gprop-freepushout-of-a}
\nicexy{
\opg \comp X \ar[d]_{i_*} \ar[r]^-{f} 
& A \ar[d]^-{h}
\\
\opg \comp Y \ar[r]
& A_{\infty}
}
\end{equation}
is a pushout in $\gprop$.
\end{itemize}
Then:
\begin{enumerate}
\item
Each map
\[
\nicexy@C+.5cm{
\ang \otimes Q^k_{k-1} \ar[r]^-{\Id \otimes i^{\boxprod k}} 
& \ang \otimes Y^{\otimes k}}
\]
on the left side of the pushout \eqref{opg-hk-pushout} is an $\Aut(G,\sds)$-cofibration between $\Aut(G,\sds)$-cofibrant objects.
\item
Each map
\[
\nicexy@C+.5cm{
\sigmabrr \dotover{\Aut(G,\sds)} 
\Bigl\{\ang \otimes Q^k_{k-1}\Bigr\} 
\ar[r]^-{(\Id \otimes i^{\boxprod k})_*} 
& 
\sigmabrr \dotover{\Aut(G,\sds)} 
 \Bigl\{\ang \otimes Y^{\otimes k}\Bigr\} 
 }
\]
on the left side of the pushout \eqref{opg-hk-pushout} is a $\sigmabrr$-cofibration between $\sigmabrr$-cofibrant objects.
\item
The map
\[
\resizebox{\textwidth}{!}{
	\nicexy@C+.5cm{
	\coprod\limits_{[(G,\sds)]} 
	\sigmabrr \dotover{\Aut(G,\sds)} 
	\Bigl\{\ang \otimes Q^k_{k-1}\Bigr\} 
	\ar[r]^-{(\Id \otimes i^{\boxprod k})_*} 
	& 
	\coprod\limits_{[(G,\sds)]} 
	\sigmabrr \dotover{\Aut(G,\sds)} 
	\Bigl\{\ang \otimes Y^{\otimes k}\Bigr\} 
	}
}
\]
on the left side of the pushout \eqref{opg-hk-pushout} is a $\sigmabrr$-cofibration between $\sigmabrr$-cofibrant objects.
\item
The map $h_k : A_{k-1}([r]) \to A_k([r])$ on the right side of the pushout \eqref{opg-hk-pushout} is a $\sigmabrr$-cofibration between $\sigmabrr$-cofibrant objects.
\item
The map $h : A \to A_{\infty}$ is a $\sigmag$-cofibration between $\sigmag$-cofibrant $\cG$-props.
\end{enumerate}
\end{proposition}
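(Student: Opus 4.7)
The plan is to prove the five parts in the order given, with each building on its predecessors via a combination of standard properties of cofibrantly generated monoidal model categories, properties of induction along subgroup inclusions, and properties of transfinite composition.

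For part (1), I would first invoke the standard iterated pushout product argument: since $i: X \to Y$ is a cofibration in $\M$ and every object of $\M$ is cofibrant (part of the compatibility hypothesis), an induction on $k$ using the defining pushout \eqref{inductive-q-one-colored} shows that $i^{\boxprod k}: Q^k_{k-1} \to Y^{\otimes k}$ is a $\Sigma_k$-cofibration between $\Sigma_k$-cofibrant objects. Next, observe that $\Aut(G,\sds)$ permutes the $k$ distinguished vertices (which all have profile in the orbit $[s]$), inducing a homomorphism $\Aut(G,\sds) \to \Sigma_k$, while acting on $\ang$ by permuting normal vertices together with their listings. The compatibility hypothesis says that $\ang$ is $\Aut(G,\sds)$-cofibrant whenever $A$ is $\sigmag$-cofibrant. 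Tensoring an $\Aut(G,\sds)$-cofibration between cofibrant objects with an $\Aut(G,\sds)$-cofibrant object (using the pushout product axiom and the fact that restriction of $\Sigma_k$-actions to $\Aut(G,\sds)$ preserves cofibrancy, again because every object of $\M$ is cofibrant) yields the desired $\Aut(G,\sds)$-cofibration.

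For parts (2) and (3), I would use that the induction functor $\sigmabrr \dotover{\Aut(G,\sds)}(-): \M^{\Aut(G,\sds)} \to \M^{\sigmabrr}$ is left Quillen (it is the left adjoint to restriction along a groupoid homomorphism; see for instance \cite{bm06}), hence preserves cofibrations between cofibrant objects, yielding (2). Then (3) follows because coproducts of cofibrations (resp.\ cofibrant objects) in $\M^{\sigmabrr}$ are cofibrations (resp.\ cofibrant).

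For (4), I would apply (3) to the left vertical map of the pushout square \eqref{opg-hk-pushout} and invoke that cobase change along a cofibration is a cofibration. Cofibrancy of $A_k([r])$ then follows inductively, using $A_0([r]) = A([r])$ which is $\sigmabrr$-cofibrant because $A$ is $\sigmag$-cofibrant by hypothesis. Finally, for (5), I would note that the transfinite composition of cofibrations between cofibrant objects is a cofibration between cofibrant objects; applying this entrywise (over each orbit $[r] \in S$) gives that $h: A \to A_\infty$ is a $\sigmag$-cofibration and $A_\infty$ is $\sigmag$-cofibrant.

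The main obstacle will be part (1), specifically the verification that the tensor decomposition $\ang \otimes Q^k_{k-1}$ carries a natural $\Aut(G,\sds)$-action compatible with both the $\Aut(G,\sds)$-action on $\ang$ (permuting normal vertices) and the $\Sigma_k$-action on $Q^k_{k-1}$ (permuting tensor factors), and that the pushout product behavior then gives an $\Aut(G,\sds)$-cofibration. This reduces to the combinatorial claim that $\Aut(G,\sds)$ factors compatibly through the relevant product of automorphism groups and symmetric groups, which should be a consequence of reducedness of $(G,\sds)$ and the fact that the distinguished vertices all have profile in a single orbit $[s]$.
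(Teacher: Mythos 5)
Your parts (2)--(5) follow the paper's argument essentially verbatim (induction along $\Aut(G,\sds) \to \sigmabrr$ is left Quillen, coproducts and cobase change preserve cofibrations between cofibrant objects, transfinite composition handles (5)), but part (1) contains a genuine gap. You assert that $i^{\boxprod k} : Q^k_{k-1} \to Y^{\otimes k}$ is a $\Sigma_k$-cofibration between $\Sigma_k$-cofibrant objects, i.e.\ a projective cofibration in $\M^{\Sigma_k}$. This is false under the stated hypotheses: compatibility only requires that every object of $\M$ be cofibrant and that the specific objects $\ang$ be $\Aut(G,\sds)$-cofibrant; it does \emph{not} make equivariant tensor powers projectively cofibrant. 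Indeed, Section~\ref{appendix} of the paper (Proposition~\ref{prop_counter_example}) shows that in $\sset$ --- which is compatible with $\UTree$ --- the object $Y^{\times 2}$ with its permutation action is already not cofibrant in $\sset^{\Sigma_2}$, so for $i : \varnothing \to Y$ the map $i^{\boxprod 2}$ is not a $\Sigma_2$-cofibration. The subsidiary claim that ``restriction of $\Sigma_k$-actions to $\Aut(G,\sds)$ preserves cofibrancy because every object of $\M$ is cofibrant'' is likewise unjustified: restriction along a group homomorphism is a right Quillen functor for the projective structures, and cofibrancy in $\M$ says nothing about projective cofibrancy of equivariant objects.

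The correct route, which the paper takes, uses only that $i^{\boxprod k}$ is a cofibration between cofibrant objects in the \emph{underlying} category $\M$ (by the pushout product axiom, since every object of $\M$ is cofibrant), equipped with an $\Aut(G,\sds)$-action coming from the fact that weak isomorphisms preserve distinguished vertices. One then invokes the one-sided equivariant statement of \cite{bm06} (Lemma 2.5.2): tensoring a genuine $G$-cofibrant object (here $\ang$, furnished by the compatibility hypothesis) with a map of $G$-objects that is merely an underlying cofibration between underlying-cofibrant objects yields a $G$-cofibration. All of the genuinely equivariant cofibrancy is carried by $\ang$; none is required of the $Q$-construction. This is precisely why the compatibility condition in Definition~\ref{def_especially_nice} is phrased in terms of $\ang$ rather than in terms of $\M^{\Sigma_k}$. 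With part (1) repaired in this way, the remainder of your outline goes through unchanged.
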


\begin{proof}
For (1), $\ang$ is $\Aut(G,\sds)$-cofibrant by the assumption that $\M$ is compatible with $\cG$.  The pushout product axiom implies that the iterated pushout product $i^{\boxprod k} : Q^k_{k-1} \to Y^{\otimes k}$ is a cofibration between cofibrant objects in $\M$, since we are assuming that every object in $\M$ is cofibrant.  Moreover, $i^{\boxprod k}$ has an $\Aut(G,\sds)$-action because weak isomorphisms preserve distinguished vertices.  So Lemma 2.5.2 in \cite{bm06} implies that the map $\Id \otimes i^{\boxprod k}$ is an $\Aut(G,\sds)$-cofibration.

Furthermore, $\ang$ is $\Aut(G,\sds)$-cofibrant, and $Y^{\otimes k}$ is cofibrant in $\M$ and has an $\Aut(G,\sds)$-action.  So Lemma 2.5.2 in \cite{bm06} implies that   $\ang \otimes Y^{\otimes k}$ is $\Aut(G,\sds)$-cofibrant, and similarly $\ang \otimes Q^k_{k-1}$ is also $\Aut(G,\sds)$-cofibrant.

For (2), note that there is a left Quillen functor
\begin{equation}
\label{aut-to-sigma}
\nicexy@C+1.5cm{
\M^{\Aut(G,\sds)} \ar[r]^-{\sigmabrr \dotover{\Aut(G,\sds)} (?)}
& \Mtor},
\end{equation}
which is the left adjoint of the functor induced by restriction along $\Aut(G,\sds) \to \sigmabrr$.  Applying this left Quillen functor to the map $\Id \otimes i^{\boxprod k}$--which is an $\Aut(G,\sds)$-cofibration between $\Aut(G,\sds)$-cofibrant objects by (1)--yields a $\sigmabrr$-cofibration between $\sigmabrr$-cofibrant objects.

For (3), note that taking a coproduct of the maps in (2) still gives a $\sigmabrr$-cofibration between $\sigmabrr$-cofibrant objects by \cite{hirschhorn} (10.2.7 and 10.3.4).

For (4), the map $h_k$ is the pushout of the map in (3), so it is a $\sigmabrr$-cofibration.  An induction then shows that both its domain and codomain are $\sigmabrr$-cofibrant objects.

Assertion (5) follows from (4), that the orbit $[\ur]$ is arbitrary, and the fact that cofibrations are closed under transfinite compositions.
\end{proof}

The following observation says that the pushout of a weak equivalence between $\sigmag$-cofibrant $\cG$-props along a map that is the pushout of a free cofibration, is again a weak equivalence between $\sigmag$-cofibrant $\cG$-props.

\begin{proposition}
\label{left-proper-key}
Suppose that $(\M,\cG)$ is a compatible pair and that 
\begin{itemize}

\item
$i : X \to Y$ is a cofibration in $\calm$, regarded as a map in $\M^S$ concentrated at the $s$-entry for some $s \in S$.  
\item
 $f : A \to B \in \gprop$ is a weak equivalence between $\sigmag$-cofibrant $\cG$-props. 
\item
Both squares in the diagram
\begin{equation}
\label{pushout-b}
\nicexy{
\opg \comp X \ar[d]_{i_*} \ar[r] 
& A \ar[d]_-{h^A} \ar[r]^-{f}_-{\sim} 
& B \ar[d]^-{h^B}
\\
\opg \comp Y \ar[r]
& A_{\infty} \ar[r]^-{f_{\infty}}
& B_{\infty}
}
\end{equation}
in $\gprop$ are pushouts.  
\end{itemize}
Then $f_{\infty}$ is also a weak equivalence between $\sigmag$-cofibrant $\cG$-props.
\end{proposition}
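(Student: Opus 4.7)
The plan is to use the countable filtration provided by Lemma~\ref{opc-jt} to reduce $f_{\infty}$ to a sequential colimit of pushouts, and then prove by induction on the filtration index $k$ that each comparison map $f_k([r]) : A_k([r]) \to B_k([r])$ in $\Mtor$ is a weak equivalence between $\sigmabrr$-cofibrant objects. Applying Lemma~\ref{opc-jt} to both pushouts in \eqref{pushout-b} yields filtrations of $h^A([r])$ and $h^B([r])$, and naturality of that construction with respect to the $\cG$-prop map $f$ produces comparison maps $f_k$ fitting into compatible morphisms of pushout squares of the form \eqref{opg-hk-pushout}. The base case $f_0([r]) = f([r])$ is a weak equivalence by hypothesis, and Proposition~\ref{sigmacof-closure} furnishes the cofibrancy inputs needed below.

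For the inductive step, the key point is that the vertical comparison on the left side of \eqref{opg-hk-pushout} is a weak equivalence from the $A$-side to the $B$-side between $\sigmabrr$-cofibrant objects. For each reduced marked graph $[(G,\sds)]$ indexing the coproduct, I first claim that $\ang \to \bng$ is a weak equivalence in $\M^{\Aut(G,\sds)}$ (with projective model structure). Indeed, for each normal vertex $u \in \sn(G,\sds)$ the entry $A(u) \to B(u)$ is a weak equivalence between cofibrant objects of $\M$, since $A$ and $B$ are $\sigmag$-cofibrant and every object of $\M$ is cofibrant by hypothesis. Iterated application of the monoidal pushout product axiom together with Ken Brown's lemma shows that the tensor product over the finite set $\sn(G,\sds)$ is again a weak equivalence between cofibrant objects; the $\Aut(G,\sds)$-cofibrancy of both $\ang$ and $\bng$ is exactly the compatibility hypothesis on $(\M,\cG)$. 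Tensoring further with the cofibrant $\Aut(G,\sds)$-object $Q^k_{k-1}$ (respectively $Y^{\otimes k}$) preserves this weak equivalence between cofibrant objects, and then applying the left Quillen functor \eqref{aut-to-sigma} and taking the coproduct over the isomorphism classes $[(G,\sds)]$ yields a weak equivalence between $\sigmabrr$-cofibrant objects on the left side of \eqref{opg-hk-pushout}, now for both the domain (involving $Q^k_{k-1}$) and codomain (involving $Y^{\otimes k}$).

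Combining the inductive hypothesis on $f_{k-1}([r])$, the fact from Proposition~\ref{sigmacof-closure} that the left vertical maps in \eqref{opg-hk-pushout} for $A$ and $B$ are $\sigmabrr$-cofibrations between $\sigmabrr$-cofibrant objects, and the weak equivalences on the left-hand corners just established, the gluing lemma applied to the morphism of pushout squares produces the weak equivalence $f_k([r])$ between $\sigmabrr$-cofibrant objects, completing the induction. Taking the transfinite colimit, both sequences $\{A_k([r])\}_k$ and $\{B_k([r])\}_k$ consist of $\sigmabrr$-cofibrations between $\sigmabrr$-cofibrant objects, so the induced map on colimits $f_{\infty}([r])$ is again a weak equivalence between $\sigmabrr$-cofibrant objects. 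Since the orbit $[r]$ was arbitrary, $f_{\infty}$ is a weak equivalence between $\sigmag$-cofibrant $\cG$-props, as desired. The main obstacle is the careful handling of the equivariant structure in the middle paragraph; the compatibility hypothesis on $(\M,\cG)$, the assumption that every object of $\M$ is cofibrant, and the $\sigmag$-cofibrancy of $A$ and $B$ are all essential in order to invoke Ken Brown's lemma and the pushout product axiom at each step.
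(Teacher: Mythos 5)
Your proposal is correct and follows essentially the same route as the paper's proof: filter both $h^A$ and $h^B$ via Lemma~\ref{opc-jt} (using that the outer rectangle of \eqref{pushout-b} is also a pushout), induct on the filtration stage using the Cube/gluing Lemma applied to the morphism of pushout squares \eqref{opg-hk-pushout}, reduce the left-hand comparison to the finite tensor product $\ang\to\bng$ of entries of $f$ tensored with $Q^k_{k-1}$ or $Y^{\otimes k}$ via Ken Brown's Lemma and the left Quillen functor \eqref{aut-to-sigma}, and conclude at the colimit using that all the $h_k$ are cofibrations between cofibrant objects. No gaps.
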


\begin{proof}
Weak equivalences in $\gprop \cong \alg(\opg)$ are created entrywise in $\calm$.  The outer rectangle in \eqref{pushout-b} is also a pushout.  So each of the maps $h^A$ and $h^B$ is filtered, in which each $[r]$-entry, with $[r] \in S$ an arbitrary orbit, of the $k$-th map is a pushout as in \eqref{opg-hk-pushout}.   There is a commutative ladder diagram
\begin{equation}
\label{ladder-diagram-ab}
\nicexy@C-.3cm{
A([r]) \ar@{=}[r] \ar[d]_-{f}  &
A_0([r]) \ar[d]_-{f_0} \ar[r]^-{h_1^A}
& A_1([r]) \ar[d]_-{f_1} \ar[r]^-{h_2^A}
& \cdots \ar[r]
& \colim A_k([r]) \cong A_\infty([r]) \ar[d]^-{f_\infty}
\\
B([r]) \ar@{=}[r]  &
B_0([r]) \ar[r]^-{h_1^B}
& B_1([r]) \ar[r]^-{h_2^B}
& \cdots \ar[r]
& \colim B_k([r]) \cong B_\infty([r])
}
\end{equation}
in $\Mtor$.   All the horizontal maps $h_k^A$ and $h_k^B$ are cofibrations in $\Mtor$ by Proposition \ref{sigmacof-closure}, and so all the objects in the ladder diagram are cofibrant in $\Mtor$.  Using \cite{hirschhorn} (15.10.12(1)), in order to show that the map $f_\infty$ is a weak equivalence between cofibrant objects in $\Mtor$, it suffices to show that all the vertical maps $f_k$, with $0 \leq k < \infty$, are weak equivalences by induction on $k$.

The map $f_0$ is the $[r]$-entry of $f$, which is a weak equivalence by assumption.  Suppose $k \geq 1$. Consider the following commutative diagram in $\Mtor$.
\begin{equation}
\label{ab-pushout-cube}
\resizebox{\textwidth}{!}{
	\nicexy@C-1.3cm{
	\coprod
	\sigmabrr \dotover{\Aut(G,\sds)} 
	\Bigl\{\ang \otimes Q^k_{k-1}\Bigr\} 
	\ar[dd]_-{\amalg (\Id \otimes i^{\boxprod k})_*} 
	 \ar@(d,l)[dr]_-{f_*} \ar[rr] 
	&&  A_{k-1}([r])
	\ar[dr]^-{f_{k-1}} \ar'[d][dd] &
	\\
	& \coprod 
	\sigmabrr \dotover{\Aut(G,\sds)} 
	\Bigl\{\bng \otimes Q^k_{k-1}\Bigr\} 
	\ar[dd] \ar[rr] 
	&& B_{k-1}([r]) \ar[dd]
	\\
	\coprod 
	\sigmabrr \dotover{\Aut(G,\sds)} 
	\Bigl\{\ang \otimes Y^{\otimes k}\Bigr\} 
	\ar@(d,l)[dr]_-{f_{*}} \ar'[r][rr] 
	&& A_k([r])  \ar[dr]^-{f_{k}} &
	\\
	& \coprod 
	\sigmabrr \dotover{\Aut(G,\sds)} 
	\Bigl\{\bng \otimes Y^{\otimes k}\Bigr\} 
	\ar[rr] 
	&& B_k([r])
	}
}
\end{equation}
Both the back face (with $A$'s) and the front face (with $B$'s) are pushout squares as in \eqref{opg-hk-pushout},  and the maps from the back square to the front square are all induced by $f$.  The map $f_{k-1}$ is a weak equivalence by the induction hypothesis.  By Proposition \ref{sigmacof-closure}, all the objects in the diagram are cofibrant in $\Mtor$, and the left vertical maps in the back and the front faces are cofibrations in $\Mtor$.  So to show that the induced map $f_k$ is a weak equivalence, it is enough to show, by the Cube Lemma \cite{hovey} (5.2.6) / \cite{hirschhorn} (15.10.10), that both maps labeled as $f_*$ are weak equivalences.

To see that the top $f_*$ in the above diagram is a weak equivalence, note that a coproduct of weak equivalences between cofibrant objects is again a weak equivalence by Ken Brown's Lemma \cite{hovey} (1.1.12).  Using the left Quillen functor \eqref{aut-to-sigma} and Ken Brown's Lemma again, it is enough to show that, within each coproduct summand, the map
\begin{equation}
\label{angq-bngq}
\nicexy{\ang \otimes Q^k_{k-1} \ar[r]^{f_*} 
& \bng \otimes Q^k_{k-1}}
\end{equation}
is a weak equivalence between $\Aut(G,\sds)$-cofibrant objects.  By Proposition \ref{sigmacof-closure} the source and target of $f_*$ are $\Aut(G,\sds)$-cofibrant objects.  Recall that weak equivalences in any diagram category in $\M$ are defined entrywise.  The map
\[
\nicexy{\ang \ar[r]^-{f_*} & \bng}
\]
is a finite tensor product of entries of $f$, each of which is a weak equivalence between cofibrant objects in $\M$.  So this $f_*$ is a weak equivalence between cofibrant objects, and tensoring this map with the cofibrant object $Q^k_{k-1}$ yields a weak equivalence, since in any monoidal model category the tensor product is a left Quillen bifunctor.

A similar argument with $Y^{\otimes k}$ in place of $Q^k_{k-1}$ shows that the bottom $f_*$ in the commutative diagram is also a weak equivalence.  Therefore, as discussed above, $f_k$ is a weak equivalence, finishing the induction.
\end{proof}

The following is the main theorem of this section, and one of two main theorems of the paper.

\begin{theorem}
\label{gprop-relative-left-proper}
Suppose that $(\M,\cG)$ is a compatible pair. 
Then the cofibrantly generated model structure on $\gprop$ in Definition~\ref{gprop-model-operad} is left proper relative to $\sigmag$-cofibrant $\cG$-props, in the sense that pushouts along cofibrations preserve weak equivalences between $\sigmag$-cofibrant $\cG$-props.   
\end{theorem}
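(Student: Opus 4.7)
The plan is to bootstrap from Proposition~\ref{left-proper-key}, which handles the base case where the cofibration being pushed out is a single free map $\opg \comp i$ with $i \in \sI$ concentrated at one entry, up to an arbitrary cofibration via a cell-complex induction. Since the class of weak equivalences in any model category is closed under retracts and pushouts in $\gprop$ commute with retracts of maps, I would first replace the given cofibration $g : A \to X$ by a relative $(\opg \comp \sI)$-cell complex produced by the small object argument; this reduces the theorem to the case where $g$ is itself such a cell complex. In other words, I may assume a $\lambda$-indexed transfinite composition
\[
A = X_0 \to X_1 \to \cdots \to X_\alpha \to X_{\alpha+1} \to \cdots \to X_\lambda = X
\]
in which each $X_\alpha \to X_{\alpha+1}$ is the pushout of a single map $\opg \comp i_\alpha : \opg \comp U_\alpha \to \opg \comp V_\alpha$, with $i_\alpha \in \sI$ a generating cofibration of $\M^{|S|}$ concentrated at one entry of $S$.

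Set $Y_\alpha = X_\alpha \sqcup_A B$ in $\gprop$, so that $Y_0 = B$, $Y_\lambda = Y$, and there are induced maps $f_\alpha : X_\alpha \to Y_\alpha$ with $f_0 = f$ and $f_\lambda$ the map of interest. The inductive claim is that for every $\alpha \leq \lambda$ both $X_\alpha$ and $Y_\alpha$ are $\sigmag$-cofibrant and $f_\alpha$ is a weak equivalence. The base case is the hypothesis on $f$. For a successor ordinal $\alpha+1$, pasting of pushouts shows that $Y_\alpha \to Y_{\alpha+1}$ is the pushout of $\opg \comp i_\alpha$ along the composite $\opg \comp U_\alpha \to X_\alpha \to Y_\alpha$; consequently the pair of pushout squares together with $f_\alpha$ matches the setup of Proposition~\ref{left-proper-key} exactly (with $A \rightsquigarrow X_\alpha$, $B \rightsquigarrow Y_\alpha$, $i \rightsquigarrow i_\alpha$), and the proposition immediately delivers the $\sigmag$-cofibrancy of $X_{\alpha+1}$ and $Y_{\alpha+1}$ along with the weak equivalence $f_{\alpha+1}$.

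The step that will require the most care is the limit ordinal case, where the weak equivalences must be passed through colimits of transfinite compositions. By Proposition~\ref{sigmacof-closure}(5) each $X_\beta \to X_{\beta+1}$ and $Y_\beta \to Y_{\beta+1}$ is a $\sigmag$-cofibration between $\sigmag$-cofibrant $\cG$-props, so restricting to each orbit $[r] \in S$ gives cofibrations between cofibrant objects in $\M^{\sigmabrr}$. Since weak equivalences in $\gprop$ are detected entrywise in $\M^S$, and sequential (more generally filtered) colimits in $\gprop$ are created in $\M^{|S|}$, I would verify the limit statement orbitwise, applying the standard fact that a natural weak equivalence between two $\lambda$-sequences of cofibrations between cofibrant objects induces a weak equivalence on colimits (\cite{hirschhorn} 15.10.12). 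Setting $\alpha = \lambda$ closes the transfinite induction, and undoing the initial cell-complex/retract reduction gives the theorem.
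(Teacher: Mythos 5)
Your proposal is correct and follows essentially the same route as the paper, whose proof of Theorem~\ref{gprop-relative-left-proper} simply states that ``a retract and induction argument reduces the proof to the situation in Proposition~\ref{left-proper-key}.'' You have merely written out in full the retract reduction, the transfinite induction over cells, and the limit-ordinal step via \cite{hirschhorn}~(15.10.12) that the paper leaves implicit.
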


\begin{proof}
The set of generating cofibrations in $\gprop$ is $\opg \comp \sI$, where $\sI$ is the set of generating cofibrations in $\M^{|S|}$, each of which is concentrated in one entry and is a generating cofibration of $\calm$ there.  A general cofibration in $\gprop$ is a retract of a transfinite composition of pushouts of maps in $\opg \circ \sI$. So a retract and induction argument reduces the proof to the situation in Proposition \ref{left-proper-key}.
\end{proof}

\begin{corollary}
Under the assumptions of Example~\ref{example_especially_nice},  the cofibrantly generated model structure on $\gprop$ in Definition~\ref{gprop-model-operad} is left proper. 
\end{corollary}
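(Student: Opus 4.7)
The plan is to show that this corollary is essentially a direct application of Theorem~\ref{gprop-relative-left-proper} once one observes that the hypothesis of Example~\ref{example_especially_nice} forces every $\cG$-prop to be $\Sigma_{\cG}$-cofibrant, so the word ``relative'' becomes vacuous.

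First I would check that the assumption---that $\M^G$ has every object cofibrant in the projective structure whenever $G$ is a finite group---makes $(\M,\cG)$ a compatible pair in the sense of Definition~\ref{def_especially_nice}. Taking $G$ the trivial group shows that every object of $\M$ is cofibrant. For any reduced well-marked $(G,\sds)$ and any $A$, the object $\ang$ carries an $\Aut(G,\sds)$-action (since $\Aut(G,\sds)$ is finite), hence is $\Aut(G,\sds)$-cofibrant by hypothesis---no $\Sigma_{\cG}$-cofibrancy of $A$ is even required. Thus $(\M,\cG)$ is a compatible pair.

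Next I would verify, following Remark~\ref{remark_every_sigma_cof}, that every object $A \in \gprop$ is $\Sigma_{\cG}$-cofibrant. Using the orbit decomposition
\[
\M^{S} \;\cong\; \prod_{([\uc];[\ud]) \in S} \M^{\Sigma_{[\uc]}^{\smallop} \times \Sigma_{[\ud]}},
\]
a $\Sigma_{\cG}$-cofibration (resp.\ cofibrant object) is nothing but a levelwise cofibration (resp.\ cofibrant object) in the projective model structures on the $\M^{\Sigma_{[\uc]}^{\smallop} \times \Sigma_{[\ud]}}$. Since each $\Sigma_{[\uc]}^{\smallop} \times \Sigma_{[\ud]}$ is a finite group, the hypothesis of Example~\ref{example_especially_nice} guarantees that every object of each factor is projectively cofibrant. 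Hence every $A \in \M^S$, and in particular every $\cG$-prop, is $\Sigma_{\cG}$-cofibrant.

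Finally I would conclude: given any pushout square in $\gprop$
\[
\xymatrix{
A \ar@{>->}[r] \ar[d]_-{\simeq} & X \ar[d] \\
B \ar[r] & Y
}
\]
with $A \to B$ a weak equivalence and $A \to X$ a cofibration, both $A$ and $B$ are automatically $\Sigma_{\cG}$-cofibrant by the previous step. Theorem~\ref{gprop-relative-left-proper} therefore applies verbatim to show that $X \to Y$ is a weak equivalence, which is precisely left properness of $\gprop$. There is no real obstacle here; the entire content of the corollary is the observation that the extra hypothesis in Example~\ref{example_especially_nice} trivializes the $\Sigma_{\cG}$-cofibrancy restriction in Theorem~\ref{gprop-relative-left-proper}.
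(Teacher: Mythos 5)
Your proposal is correct and follows exactly the paper's route: the paper's proof is the single observation that, under the hypothesis of Example~\ref{example_especially_nice}, every object of $\gprop$ is $\sigmag$-cofibrant (Remark~\ref{remark_every_sigma_cof}), so Theorem~\ref{gprop-relative-left-proper} applies to every pushout along a cofibration. You simply spell out the justification of that remark via the orbit decomposition of $\M^S$ and the finiteness of the relevant automorphism groups, which is a faithful expansion of the same argument.
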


\begin{proof} 
Every object of $\gprop$ is $\sigmag$-cofibrant by Remark~\ref{remark_every_sigma_cof}.
\end{proof}

The following observation says that cofibrant $\cG$-props are also $\sigmag$-cofibrant.  It will be used in Proposition \ref{chi-weq} below.

\begin{proposition}
\label{cof-is-sigmacof}
Suppose that $(\M,\cG)$ is a compatible pair and that $P$ is a cofibrant $\cG$-prop.
Then $P$ is also a $\sigmag$-cofibrant $\cG$-prop.
\end{proposition}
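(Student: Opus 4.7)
The plan is to apply Proposition~\ref{sigmacof-closure}(5) along a cell decomposition of $P$. Since $\gprop$ is cofibrantly generated by $\opg \comp \sI$, the cofibrant $\cG$-prop $P$ is a retract of an $\opg \comp \sI$-cell complex, that is, a transfinite composition
\[
	\varnothing_{\gprop} = P_0 \to P_1 \to \cdots \to P_\alpha \to P_{\alpha+1} \to \cdots \to P_\lambda
\]
starting from the initial $\cG$-prop, where each successor $P_\alpha \to P_{\alpha+1}$ is a pushout of a coproduct of maps in $\opg \comp \sI$ (which we may further decompose into a transfinite composition of pushouts of single generating cofibrations), and $P_\beta = \colim_{\alpha < \beta} P_\alpha$ at limit ordinals. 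Since $\sigmag$-cofibrant objects are closed under retracts, it suffices to prove that every $P_\lambda$ is $\sigmag$-cofibrant.

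First I would verify the base case: the initial $\cG$-prop $\varnothing_{\gprop} \cong \opg \comp \varnothing_{\M^{|S|}}$ is $\sigmag$-cofibrant. Using the graphical description of $\opg$, with empty input the $[r]$-entry of $\opg \comp \varnothing_{\M^{|S|}}$ is a coproduct indexed by vertex-free graphs in $\cG$ with profile in $[r]$. Such graphs are exactly the exceptional edges $\uparrow_c$, each contributing $\mathbb{1}$ at $[(c;c)]$, and, when present in $\cG$, the exceptional loops $\circlearrowright_c$, each contributing $\mathbb{1}$ at $[(\varnothing;\varnothing)]$; all other entries are the initial object $\varnothing_{\M}$. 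Under the compatibility assumption every object of $\M$ is cofibrant, so each entry is cofibrant in the corresponding $\M^{\sigmabrr}$, yielding $\sigmag$-cofibrancy of $\varnothing_{\gprop}$.

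Next I would run a transfinite induction on $\alpha$. The successor step is precisely Proposition~\ref{sigmacof-closure}(5): if $P_\alpha$ is $\sigmag$-cofibrant, then the pushout $P_\alpha \to P_{\alpha+1}$ along $\opg \comp i$ with $i \in \sI$ is a $\sigmag$-cofibration between $\sigmag$-cofibrant $\cG$-props. At limit ordinals the colimit $P_\beta$ in $\gprop$ agrees with the colimit in $\M^{|S|}$, and hence in $\M^S$, because the forgetful functor preserves and creates filtered colimits as noted after Definition~\ref{gprop-model-operad}; cofibrations in $\M^S$ are closed under transfinite composition, so $P_\beta$ is again $\sigmag$-cofibrant. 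The main point requiring care is this identification of colimits along the cell tower; once it is in hand, retract closure delivers the conclusion for $P$.
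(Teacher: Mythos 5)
Your proposal is correct and follows essentially the same route as the paper's proof: express $P$ as a retract of a cell complex built from $\opg \comp \sI$, note that the initial $\cG$-prop is $\sigmag$-cofibrant (the paper cites cofibrancy of the unit and the explicit descriptions \eqref{initial-wheelfree} and \eqref{initial-wheel}, which is what your base-case computation reproduces), and then apply Proposition~\ref{sigmacof-closure}(5) together with closure of cofibrations under transfinite composition and retracts. The extra care you take at limit ordinals is a welcome explicitation of a step the paper leaves implicit, but it is not a different argument.
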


\begin{proof}
Recall that $\gprop$ is cofibrantly generated (Definition \ref{gprop-model-operad}).  Since $P \in \gprop$ is cofibrant, it is a retract of the colimit $P_{\infty}$ in a transfinite composition 
\begin{equation}
\label{cell-complex}
\nicexy{
\varnothing = P_0 \ar[r]^-{j_1} & P_1 \ar[r]^-{j_2} & \cdots \ar[r] & \colim_k P_k \defn P_{\infty}}
\end{equation}
in $\gprop$ starting at the initial $\cG$-prop $\varnothing$, in which each map $j_k : P_{k-1} \to P_k$ is a pushout as in Proposition \ref{sigmacof-closure} with $i : X \to Y$ a generating cofibration in $\M$.  To show that $P$ is $\sigmag$-cofibrant, it is enough to show that $P_{\infty}$ is $\sigmag$-cofibrant.  Moreover, the initial $\cG$-prop is $\sigmag$-cofibrant because, given our conditions on $\M$, the unit of $\M$ is cofibrant.\footnote{see \eqref{initial-wheelfree} and \eqref{initial-wheel} for an explicit description of the initial $\cG$-prop}.  Since cofibrations are closed under transfinite composition, it is enough to show that each map $j_k$ for $k \geq 1$ is a $\sigmag$-cofibration.  This is true by Proposition \ref{sigmacof-closure}(5) and an induction.
\end{proof}

\section{Derived change-of-base}\label{section basechange}

The main result in this section is Theorem \ref{derived-basechange}.  It says that, under certain assumptions, a Quillen equivalence between underlying categories induces a Quillen equivalence between the categories of $\cG$-props. 


Recall the definition of a pasting scheme being admissible in a monoidal model category $\M$ (Def. \ref{gprop-model-operad}) and the definition of a shrinkable pasting scheme (Def. \ref{def:shrinkable}).
We will also need the following definition, which appears in \cite{ss03} (Section 3.2).

\begin{definition}[Weak Monoidal Quillen Pairs] Suppose that $\M$ and $\N$ are monoidal model categories.
\begin{itemize}
	\item A \textbf{lax monoidal} structure on a functor $R : \N \rightarrow \M$ consists of a morphism $\nu: \tensorunit_{\M} \to R(\tensorunit_{\N})$, and natural morphisms \[ RX \otimes RY \to R(X \otimes Y)\] which are coherently associative and unital.
	\item A \textbf{weak monoidal Quillen pair} between monoidal model categories $\M$ and $\N$ consists of a Quillen adjoint pair
	\[ L : \M \adjoint \N : R\]
	with a lax monoidal structure on the right adjoint $R$ so that the following hold:
	\begin{itemize}
		\item For all cofibrant objects $A$ and $B$ in $\M$, the map $L(A\otimes B) \to LA \otimes LB$ (adjoint to $A \otimes B \to RLA \otimes RLB \to R(LA \otimes LB)$) is a weak equivalence in $\N$.
		\item For some cofibrant replacement $q: (\tensorunit_{\M})^c \overset\simeq\to \tensorunit_{\M}$, the composite map
		\[
			L(\tensorunit_{\M})^c \overset{Lq}\to L\tensorunit_{\M} \overset{\check \nu}\to \tensorunit_{\N}
		\]
		is a weak equivalence in $\N$.
	\end{itemize}
\end{itemize}

\end{definition}

Examples of weak monoidal Quillen pairs include the adjunction between reduced rational dg Lie algebras and reduced rational simplicial Lie algebras \cite{quillen}, and the Dold-Kan equivalence of chain complexes and simplicial abelian groups.

\begin{proposition}
\label{induced-adjoints}
Suppose:
\begin{itemize}
\item
$L : \M \adjoint \N : R$ is a weak symmetric monoidal Quillen pair with left adjoint $L$.
\item
$\cG = (S,\tg)$ is a pasting scheme which is admissible in $\M$.
\end{itemize}
Then there is an induced diagram with four Quillen pairs
\begin{equation}
\label{adjoint-diagram}
\nicexy{
\Mtos \ar@<2pt>[r]^-{L}  \ar@<2pt>[d]^-{\mathrm{free}}
& \Ntos \ar@<2pt>[d]^-{\mathrm{free}}  \ar@<2pt>[l]^-{R}
\\
\gpropm  \ar@<2pt>[r]^-{\Lg}  \ar@<2pt>[u]
& \gpropn  \ar@<2pt>[u]  \ar@<2pt>[l]^-{R}
}
\end{equation}
in which the following statements hold:
\begin{enumerate}
\item
The Quillen pair $(L,R)$ in the top row of \eqref{adjoint-diagram} is the entrywise prolongation of the original Quillen pair between $\M$ and $\N$.  This $(L,R)$ is a Quillen equivalence if the original adjoint pair between $\M$ and $\N$ is.
\item
Both $\gpropm$ and $\gpropn$ have the model structures in Definition \ref{gprop-model-operad}.
\item
Both vertical Quillen pairs are the free-forgetful adjunctions in \cite{jy2} (12.9), in which the undecorated right adjoints forget all of the $\cG$-prop structure except for the equivariant structure.
\item
At the bottom row the right adjoint $R$ is the entrywise prolongation of the original right adjoint as in \cite{jy2} (12.11(1)).
\item
The square of right adjoints commutes, and the square of left adjoints also commutes up to natural isomorphisms.
\item
If the original left adjoint $L$ is symmetric monoidal, then $\Lg$ is naturally isomorphic to the entrywise prolongation of $L$.
\end{enumerate}
\end{proposition}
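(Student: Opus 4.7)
My plan is to establish each of the six claims in turn, treating them as a mix of routine model-categorical bookkeeping and one genuine construction (that of $\Lg$).

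First, for the top pair in the square, note that $\M^S$ and $\N^S$ carry the projective model structures (Definition \ref{def:sigma-cof}), in which weak equivalences and fibrations are detected entrywise. The entrywise prolongations of $L$ and $R$ therefore form a Quillen pair because the original $(L,R)$ is, and the total derived adjunction is also checked entrywise, so $(L,R) : \M^S \adjoint \N^S$ is a Quillen equivalence whenever the original one is. The model structures on $\gpropm$ and $\gpropn$ come by hypothesis from Definition \ref{gprop-model-operad} (we will implicitly assume admissibility in $\N$ as well, which is the standing setting needed for the square to make sense), and the vertical free/forgetful Quillen pairs are exactly the ones from \cite{jy2} (12.9): the forgetful functors create fibrations and weak equivalences, hence their left adjoints are left Quillen.

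For part (4), the bottom right adjoint is defined by entrywise prolongation, which makes sense because $R$ is lax symmetric monoidal: given $X\in\gpropn$ and a structure map of the form $\bigotimes_{v\in H} X\profv \to X(u)$ coming from a graph substitution, one obtains the required structure map on $\{R X(c;d)\}_{(c;d)\in S}$ by precomposing with the iterated lax constraint $\bigotimes_v R(X\profv) \to R\bigotimes_v X\profv$ and then applying $R$. Coherence with graph substitution is equivalent to coherence of the lax monoidal structure. Since the vertical forgetful functors on both sides simply forget everything but the underlying $\Sigma_\cG$-object, commutativity of the square of right adjoints is immediate, giving (5) for right adjoints. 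The commutation of the left adjoint square up to natural isomorphism is then a formal consequence of the uniqueness of left adjoints.

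The main (and only nontrivial) step is establishing the existence of $\Lg$. The cleanest route is to apply the adjoint functor theorem: $\gpropm \cong \alg(\opg_\M)$ is locally presentable (as algebras in a locally presentable category over a colored operad whose monad preserves filtered colimits), $\gpropn$ is cocomplete, and the bottom $R$ preserves all limits and filtered colimits — both statements reduce, via the vertical forgetful functors and the fact that these functors create limits and filtered colimits in $\gprop$ by \cite{white-yau} (4.2.1), to the entrywise statement that $R$ on $\N$ preserves limits and filtered colimits (as a right adjoint). This produces a left adjoint $\Lg$, which is automatically left Quillen because $R$ preserves fibrations and acyclic fibrations (these being detected entrywise and $R$ being a right Quillen functor on $\N$). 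Alternatively, one can give $\Lg(A)$ explicitly as the reflexive coequalizer of two maps $\opg_\N \comp L(\opg_\M \comp UA) \rightrightarrows \opg_\N \comp L(UA)$ induced by the $\opg_\M$-algebra structure on $A$ and the oplax comonoidal structure on $L$; this presentation is occasionally useful for tracking cofibrancy but is not needed to establish the adjunction.

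Finally, for (6), if $L$ is strong symmetric monoidal, then the oplax constraints $L(A\otimes B)\to LA\otimes LB$ are isomorphisms, so the entrywise prolongation of $L$ carries $\cG$-prop structures to $\cG$-prop structures (by dualizing the argument in (4)), defining a functor $\Lg' : \gpropm \to \gpropn$. The entrywise adjunction identity identifies $\Lg'$ as a left adjoint to $R$; by uniqueness of left adjoints, $\Lg' \cong \Lg$. The one place one must be careful is that in the merely weak monoidal case we do \emph{not} obtain a strict prolongation because the oplax constraint need not be an isomorphism; this is exactly why the abstract construction in the previous paragraph is necessary, and is the single real obstacle in the proposition.
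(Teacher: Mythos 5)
Your treatment of items (1)--(5) and (6) matches the paper's: entrywise prolongations for the horizontal right adjoints, free/forgetful for the vertical pairs, Quillen-ness from the fact that every right adjoint preserves (acyclic) fibrations since these are detected entrywise, commutativity of the right-adjoint square by inspection, and uniqueness of left adjoints for the rest. The one place you genuinely diverge is the existence of $\Lg$. The paper gets it in one line from the Adjoint Lifting Theorem (Borceux 4.5.6), applied to the square of right adjoints over the monadic forgetful functors; the hypothesis it needs is that $\gpropn$ has reflexive coequalizers, which the paper has already secured via \cite{white-yau} (4.2.1). Your primary route instead invokes local presentability of $\gpropm$ and an adjoint functor theorem. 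That is a real overreach relative to the paper's standing assumptions: $\M$ is only required to be a bicomplete cofibrantly generated monoidal model category, and the paper explicitly allows bases such as compactly generated Hausdorff spaces, which are not locally presentable. Moreover, your parenthetical that $R$ preserves filtered colimits ``as a right adjoint'' is not correct as stated --- right adjoints preserve limits, and accessibility of $R$ is an extra input, not automatic. Fortunately your fallback construction --- $\Lg A$ as the reflexive coequalizer of $\opg_{\N}\comp L(\opg_{\M}\comp UA)\rightrightarrows \opg_{\N}\comp L(UA)$ --- is precisely the construction the Adjoint Lifting Theorem produces, and it works in the paper's generality; I would promote that from an aside to the actual argument and drop the local presentability claim. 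With that adjustment your proof is correct and essentially equivalent to the paper's.
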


\begin{proof}
The top and the vertical adjoint pairs exist as explained in the statements above.  The bottom horizontal left adjoint $\Lg$ exists by the Adjoint Lifting Theorem \cite{borceux} (4.5.6).  Every one of the four adjoint pairs is a Quillen pair because every right adjoint above preserves fibrations and acyclic fibrations, since they are defined entrywise in the underlying categories. 

If the original adjoint pair $(L,R)$ is a Quillen equivalence, then so is the top adjoint pair in \eqref{adjoint-diagram} by \cite{hirschhorn} (11.6.5(2)).

Furthermore, since both horizontal right adjoints are $R$ entrywise and both vertical right adjoints are forgetful functors, the right adjoints square commutes.  By uniqueness of left adjoints, the left adjoints square also commutes up to natural isomorphisms.

If the original left adjoint $L : \M \to \N$ is symmetric monoidal, then its entrywise prolongation is a functor $\gpropm \to \gpropn$ and is left adjoint to the entrywise prolongation of $R$ by \cite{jy2} (12.13).  So there is a natural isomorphism $\Lg \cong L$ by uniqueness of left adjoints.
\end{proof}

The following definition is a way of measuring how different $\Lg$ is from $L$ when the latter is not symmetric monoidal, but only weakly symmetric monoidal.

\begin{definition}
Suppose:
\begin{itemize}
\item
$L : \M \adjoint \N : R$ is a weak symmetric monoidal Quillen pair with left adjoint $L$.
\item
$\cG = (S,\tg)$ is a (not necessarily shrinkable) pasting scheme which is admissible in both $\M$ and $\N$.
\item 
$P \in \gpropm$.
\end{itemize}
Denote by
\begin{equation}
\label{transition-map}
\nicexy{
LP \ar[r]^-{\chi_P} & \Lg P \in \Ntos}
\end{equation}
the adjoint of the unit map $P \to R\Lg P$ regarded in $\Mtos$.
\end{definition}

\begin{remark}
For simplicity we omitted all the forgetful functors in the map $\chi_P$.  Denoting by $U$ the forgetful functors, the map $\chi_P$ is $LUP \to U\Lg P$.
\end{remark}

The following observation says that for the initial $\cG$-prop, $\Lg$ and $L$ are not all that different.  It will serve as the initial case in the induction in the proof of Proposition \ref{cof-chi-weq} below.

\begin{proposition}
\label{initialprop-chi}
Suppose:
\begin{itemize}
\item
$L : \M \adjoint \N : R$ is a weak symmetric monoidal Quillen pair with left adjoint $L$.

\item
Both $\M$ and $\N$ are compatible with $\cG$.
\item $P_0$ is the initial $\cG$-prop in $\M$.
\end{itemize}
Then the map $\chi_{P_0} : LP_0 \to \Lg P_0$ is a weak equivalence.
\end{proposition}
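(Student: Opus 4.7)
The plan is to compute both $LP_0$ and $\Lg P_0$ explicitly and then compare them orbit-by-orbit in $\Ntos$. Since $\Lg$ is a left adjoint, it preserves the initial object, so $\Lg P_0$ is the initial $\cG$-prop of $\N$. Therefore, after unfolding the explicit description of the initial $\cG$-prop (as referenced in the footnote pointing to \eqref{initial-wheelfree} and \eqref{initial-wheel}), the map $\chi_{P_0} : LP_0 \to \Lg P_0$ will be computed on each component of the decomposition \eqref{m-tothe-s-again}.

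First I would describe the components of $P_0$. On an orbit $[r] = \smallbinom{[\ud]}{[\uc]}$ that is not of the form $\smallbinom{[c]}{[c]}$ (and, in the wheeled case, not the empty profile $\smallbinom{\varnothing}{\varnothing}$), we have $P_0([r]) = \varnothing_{\M}$, and $L$ preserves this initial object, so $\chi_{P_0}$ is an isomorphism (hence a weak equivalence) on such entries. On the orbit $\smallbinom{[c]}{[c]}$, we have $P_0(c;c) = \tensorunit_{\M}$ coming from the exceptional edge $\uparrow_c$, and $(\Lg P_0)(c;c) = \tensorunit_{\N}$; under the adjoint description, the component of $\chi_{P_0}$ here is precisely the comparison map $\check\nu : L\tensorunit_{\M} \to \tensorunit_{\N}$. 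In the wheeled case, the component at $\smallbinom{\varnothing}{\varnothing}$ is a coproduct, indexed by colors, of copies of $\tensorunit$ coming from the exceptional loops $\circlearrowright_c$, and since $L$ preserves coproducts, $\chi_{P_0}$ on this orbit is the coproduct of the same comparison maps $\check\nu$.

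Next I would show that $\check\nu : L\tensorunit_{\M} \to \tensorunit_{\N}$ is a weak equivalence in $\N$. Since $(\M, \cG)$ is a compatible pair, every object of $\M$ is cofibrant, so in particular $\tensorunit_{\M}$ is already cofibrant. Taking the identity as the cofibrant replacement $q : (\tensorunit_{\M})^c = \tensorunit_{\M} \to \tensorunit_{\M}$ in the definition of a weak monoidal Quillen pair, the composite $L(\tensorunit_{\M})^c \xrightarrow{Lq} L\tensorunit_{\M} \xrightarrow{\check\nu} \tensorunit_{\N}$ reduces to $\check\nu$ itself, which is therefore a weak equivalence.

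Finally I would assemble the pieces. Since $\M$ and $\N$ are both compatible with $\cG$, every object of $\N$ is cofibrant (in particular $\tensorunit_{\N}$ and $L\tensorunit_{\M}$ are), so in the wheeled case the coproduct of copies of $\check\nu$ is a weak equivalence by Ken Brown's Lemma \cite{hovey}(1.1.12) applied to the coproduct functor. Combined with the isomorphism on all other orbits, this shows $\chi_{P_0}$ is a weak equivalence entrywise in $\Ntos$, which is how weak equivalences are detected. The main (and only) subtle step is the verification that $\check\nu$ itself is a weak equivalence; this is where the assumption that every object of $\M$ is cofibrant (built into compatibility) is used crucially to avoid the usual necessity of first cofibrantly replacing the unit.
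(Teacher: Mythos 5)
Your proposal is correct and follows essentially the same route as the paper's proof: identify $\Lg P_0$ as the initial $\cG$-prop in $\N$ via preservation of initial objects, compute $P_0$ entrywise (splitting off the $(c;c)$ entries and, in the wheeled case, the $(\varnothing;\varnothing)$ entry), and reduce everything to the counit comparison $L\tensorunit_{\M}\to\tensorunit_{\N}$ being a weak equivalence because $\tensorunit_{\M}$ is cofibrant, with the coproduct case handled by the fact that coproducts of weak equivalences between cofibrant objects are weak equivalences. The only cosmetic difference is that you spell out the reduction of the unit axiom to $\check\nu$ itself (via two-out-of-three with a cofibrant replacement), which the paper asserts without comment.
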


\begin{proof}
Since $P_0$ is the initial $\cG$-prop in $\M$, $\Lg P_0$ is the initial $\cG$-prop in $\N$ because $\Lg$ is a left adjoint.  There are now two cases.
\begin{enumerate}
\item
Suppose $\cG$ is wheel-free, i.e., $\cG$ is properly contained in $\gupc$.  Then
\begin{equation}
\label{initial-wheelfree}
P_0\dc = \begin{cases}
\tensorunit_{\M} & \text{if  $\dc = \ccsingle$},\\
\varnothing & \text{otherwise},
\end{cases}
\end{equation}
and similarly for $\Lg P_0$.
For a color $c \in \fC$ for which $\uparrow_c~ \in \cG$, the map $\chi_{P_0}$ at the $\ccsingle$-entry is the counit map $L\tensorunit_{\M} \to \tensorunit_{\N}$, which is a weak equivalence as part of the definition of a weak symmetric monoidal Quillen pair because $\tensorunit_{\M}$ is cofibrant.  In any other entry, the map $\chi_{P_0}$ is the unique self map of the initial object in $\N$.
\item
If $\cG$ has wheels, then
\begin{equation}
\label{initial-wheel}
P_0\dc = 
\begin{cases}
\tensorunit_{\M} & \text{ if $\dc = \ccsingle$},\\
\coprod\limits_{c \in \fC} 
\tensorunit_{\M} & \text{ if $\dc = \emptyprof$},\\
\varnothing & \text{otherwise},
\end{cases}
\end{equation}
and similarly for $\Lg P_0$.  As in the wheel-free case, the map $\chi_{P_0}$ is either a weak equivalence or the identity map of the initial object in entries $\dc \not= \emptyprof$.  At the $\emptyprof$-entry, $\chi_{P_0}$ is the map
\[
L\Bigl(\coprod \tensorunit_{\M}\Bigr) \cong
\coprod L\tensorunit_{\M} \to 
\coprod \tensorunit_{\N},
\]
which is a coproduct of the counit map $L\tensorunit_{\M} \to \tensorunit_{\N}$.  The counit map is a weak equivalence between cofibrant objects in $\N$, hence so is the coproduct.
\end{enumerate}
This shows that the map $\chi_{P_0}$ is a weak equivalence.
\end{proof}

Roughly the following observation says that, if the map $\chi_A$ is a weak equivalence, then it remains so after we attach a free cell to $A$, provided $A$ is cofibrant.   It will serve as the induction step in the proof of Proposition \ref{cof-chi-weq} below.

\begin{proposition}
\label{chi-weq}
Suppose:
\begin{itemize}
\item
$L : \M \adjoint \N : R$ is a weak symmetric monoidal Quillen pair with left adjoint $L$.

\item
Both $\M$ and $\N$ are compatible with $\cG$. 
\item
$i : X \to Y$ is a cofibration in $\calm$, regarded as a map in $\M^S$ concentrated at the $s$-entry for some $s \in S$.  
\item
The diagram
\begin{equation}
\label{gprop-pushout-freecof}
\nicexy{
\opg \comp X \ar[d]_{i_*} \ar[r]^-{f} 
& A \ar[d]^-{h}
\\
\opg \comp Y \ar[r]
& A_{\infty}
}
\end{equation}
is a pushout in $\gprop$ with $A$ cofibrant.
\item
The map $\chi_A : LA \to \Lg A$ is a weak equivalence.
\end{itemize}
Then the map $\chi_{A_\infty} : LA_{\infty} \to \Lg A_{\infty}$ is also a weak equivalence.
\end{proposition}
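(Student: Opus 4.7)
My plan is to mimic the strategy of Proposition \ref{left-proper-key}: filter both $LA_\infty$ and $\Lg A_\infty$ using Lemma \ref{opc-jt}, and show inductively that the comparison map is a weak equivalence at every stage. First, note that $\Lg$ is a left adjoint, so it preserves the pushout \eqref{gprop-pushout-freecof}; combined with Proposition~\ref{induced-adjoints}(5), which provides a natural isomorphism $\Lg(\opg \comp -) \cong \opg \comp L(-)$, this produces a pushout square in $\gpropn$ with top row $\opg \comp LX \to \Lg A$ and bottom row $\opg \comp LY \to \Lg A_\infty$. Lemma \ref{opc-jt}, applied to the original pushout in $\gpropm$ and to the image pushout in $\gpropn$, yields filtrations
\[
A = A_0 \to A_1 \to A_2 \to \cdots \to \colim_k A_k = A_\infty \qquad\text{in}\qquad \M^S
\]
and
\[
\Lg A = \bar A_0 \to \bar A_1 \to \bar A_2 \to \cdots \to \colim_k \bar A_k = \Lg A_\infty \qquad\text{in}\qquad \N^S,
\]
where at stage $k$ the map $\bar A_{k-1} \to \bar A_k$ is the pushout \eqref{opg-hk-pushout} built from $Li : LX \to LY$ and from tensor powers of entries of $\Lg A$. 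Applying $L$ entrywise (which is a left adjoint $\M^S \to \N^S$) to the first filtration gives $LA_\infty = \colim_k LA_k$, and the maps $\chi_{A_k} : LA_k \to \bar A_k$ assemble into a ladder with $\chi_{A_\infty}$ in the colimit.

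Since $A$ is cofibrant in $\gpropm$, Proposition~\ref{cof-is-sigmacof} shows it is $\sigmag$-cofibrant in $\M$. Likewise $\Lg A$ is cofibrant in $\gpropn$ (as $\Lg$ is left Quillen by Proposition~\ref{induced-adjoints}), hence $\sigmag$-cofibrant in $\N$. Applying Proposition~\ref{sigmacof-closure} on both sides, each $h_k$ and $\bar h_k$ is a $\sigmabrr$-cofibration between $\sigmabrr$-cofibrant objects, and $L$ (a left Quillen functor) preserves these cofibrations and cofibrant objects. By \cite{hirschhorn}(15.10.12(1)) it is enough to show that each $\chi_{A_k}$ is a weak equivalence. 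The base case $k=0$ is the hypothesis that $\chi_A$ is a weak equivalence. For the inductive step, I consider the cube analogous to \eqref{ab-pushout-cube}: the back face is $L$ applied to the pushout defining $A_k$ from $A_{k-1}$, and the front face is the pushout defining $\bar A_k$ from $\bar A_{k-1}$. Because $L$ preserves pushouts and all four corners on both faces are $\sigmabrr$-cofibrant, while the left vertical maps are $\sigmabrr$-cofibrations, the Cube Lemma \cite{hovey}(5.2.6) reduces the induction to showing that the remaining three back-to-front maps are weak equivalences.

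The induction hypothesis handles the map $LA_{k-1} \to \bar A_{k-1}$. The remaining two comparison maps are coproducts, indexed over the same isomorphism classes $[(G,\sds)]$, of maps obtained from the left Quillen functor $\sigmabrr \dotover{\Aut(G,\sds)} (-)$ applied to
\[
L\bigl(\ang \otimes Q^k_{k-1}(i)\bigr) \longrightarrow (\Lg A)\bigl(\sn(G)\bigr) \otimes Q^k_{k-1}(Li),
\]
and similarly with $Y^{\otimes k}$ in place of $Q^k_{k-1}$. Both Ken Brown's lemma and the left Quillen functor permit us to reduce to verifying that each such map is a weak equivalence between $\Aut(G,\sds)$-cofibrant objects. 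I decompose this map through the intermediate object $L(\ang) \otimes L(Q^k_{k-1}(i))$: the first leg is the weak monoidal comparison map, which is a weak equivalence because both tensor factors are cofibrant (every object of $\M$ is cofibrant, and $\ang$ is $\Aut(G,\sds)$-cofibrant and hence cofibrant in $\M$); the second leg tensors the entrywise weak equivalence $L(\ang) \to (\Lg A)(\sn(G))$, obtained by iterating the weak monoidal comparison and using componentwise the hypothesis that $\chi_A$ is a weak equivalence, with the weak equivalence $LQ^k_{k-1}(i) \to Q^k_{k-1}(Li)$, itself built inductively from the pushout-preservation of $L$ and the weak monoidal comparison on cofibrant tensor powers of $X$ and $Y$. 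Tensor product with a cofibrant object is a left Quillen bifunctor, so these equivalences are preserved.

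The hard part is the last paragraph: carefully unpacking the $Q$-construction and the decoration $\ang$ to show the required weak equivalence of tensor products and iterated pushouts in $\N$ from weak monoidality and the hypothesis on $\chi_A$. Once that comparison is established, the Cube Lemma closes the induction and \cite{hirschhorn}(15.10.12(1)) upgrades the stagewise equivalences to the conclusion that $\chi_{A_\infty}$ is a weak equivalence, as desired.
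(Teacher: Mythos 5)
Your proposal is correct and follows essentially the same route as the paper: the same filtration from Lemma \ref{opc-jt} applied to both pushouts, the same ladder argument via \cite{hirschhorn} (15.10.12(1)), the same cube reduction, and the same factorization of the comparison maps through the lax comonoidal structure and $\chi_A$. The one step you defer as ``the hard part'' --- that $L\bigl(\ang \otimes Q^k_{k-1}(i)\bigr) \to \lang \otimes Q^k_{k-1}(Li)$ is a weak equivalence --- is exactly where the paper invokes \cite{muro14} (4.3), so your sketched inductive argument for the $Q$-construction comparison is reconstructing that cited lemma rather than filling a genuine gap.
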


\begin{proof}
Pick an orbit $[r] \in S$, and consider the filtration \eqref{pushout-filtration-gprop}  in $\M^{\sigmabrr}$
\[
\nicexy{
A([r]) = A_0([r]) \ar[r]^-{h_1}
& A_1([r]) \ar[r]^-{h_2}
& \cdots \ar[r]
& \colim_k A_k([r]) 
\cong A_{\infty}([r])
}\]
of the $[r]$-entry of the map $h : A \to A_{\infty}$.  Applying the left adjoint $L$ in the $[r$]-entry, we obtain the filtration in $\N^{\sigmabrr}$
\[
\nicexy@C-.4cm{
L(A[r]) = L(A_0[r]) \ar[r]^-{Lh_1}
& L( A_1[r]) \ar[r]^-{Lh_2}
& \cdots \ar[r]
& \colim_k L(A_k[r])
\cong L(A_{\infty}[r])
}\]
of the map $(Lh)[r]$.

On the other hand, applying the left adjoint $\Lg$ to the pushout diagram \eqref{gprop-pushout-freecof} in $\gpropm$, we obtain the pushout diagram
\begin{equation}
\label{lga-freepushout}
\nicexy{
\opg \comp LX \ar[d]_{(Li)_*} \ar[r] 
& \Lg A \defn B \ar[d]^-{\Lg h}
\\
\opg \comp LY \ar[r]
& \Lg A_{\infty} \defn B_\infty
}
\end{equation}
in $\gpropn$.  To obtain the left side, we used the fact that the left adjoints square in \eqref{adjoint-diagram} commutes up to natural isomorphism.  Applying Lemma \ref{opc-jt} to this pushout diagram, we obtain a corresponding bottom horizontal filtration and a commutative ladder diagram
\begin{equation}
\label{ladder-la}
\nicexy@C-.4cm{
L(A[r]) = L(A_0[r]) \ar[r] \ar[d]_-{\xi_0}^-{= \chi_A}
& L( A_1[r]) \ar[r] \ar[d]_-{\xi_1}
& \cdots \ar[r]
& L(A_{\infty}[r]) \ar[d]_-{\xi_\infty}^-{= \chi_{A_\infty}}
\\
(\Lg A)[r] = B[r] = B_0[r] \ar[r] & B_1[r] \ar[r] & \cdots \ar[r] & B_\infty[r] = (\Lg A_\infty)[r]
}
\end{equation}
in $\N^{\sigmabrr}$.

Since $A \in \gpropm$ is cofibrant, it is also $\sigmag$-cofibrant in $\M$ by Proposition \ref{cof-is-sigmacof}, and hence $LA$ is $\sigmag$-cofibrant in $\N$ because $L$ is a left Quillen functor.   Likewise, $\Lg A \in \gpropn$ is cofibrant because $\Lg$ is a left Quillen functor by Proposition \ref{induced-adjoints}, so it is also $\sigmag$-cofibrant in $\N$ (Proposition \ref{cof-is-sigmacof}).  Moreover, all the horizontal maps are $\sigmabrr$-cofibrations by Proposition \ref{sigmacof-closure}.  Using once again \cite{hirschhorn} (15.10.12(1)), to show that the map $\chi_{A_\infty}$ is a weak equivalence, it is enough to show that all the vertical maps $\xi_k$ in \eqref{ladder-la} for $0 \leq k < \infty$ are weak equivalences by induction.  The map $\xi_0$ is the $[r]$-entry of the map $\chi_A$, so it is a weak equivalence.

For the induction step, suppose $\xi_{k-1}$ is a weak equivalence.  We must show that $\xi_k$ is a weak equivalence, for which we will use a cube argument similar to \eqref{ab-pushout-cube}.  Consider the commutative diagram in $\Ntor$:
\begin{equation}
\label{la-pushout-cube}
\resizebox{\textwidth}{!}{
	\nicexy@C-1.5cm{
	\coprod 
	\sigmabrr \dotover{\Aut(G,\sds)} 
	L\Bigl\{\ang \otimes Q^k_{k-1}(i)\Bigr\} 
	\ar[dd]_-{\amalg (\Id \otimes i^{\boxprod k})_*} 
	 \ar@(d,l)[dr]_-{\alpha} \ar[rr] 
	&&  L(A_{k-1}[r])
	\ar[dr]^-{\xi_{k-1}} \ar'[d][dd] &
	\\
	& \coprod 
	\sigmabrr \dotover{\Aut(G,\sds)} 
	\Bigl\{\bng \otimes Q^k_{k-1}(Li)\Bigr\} 
	\ar[dd]_(.7){\amalg (\Id \otimes (Li)^{\boxprod k})_*}  \ar[rr]
	&& B_{k-1}([r]) \ar[dd]
	\\
	\coprod 
	\sigmabrr \dotover{\Aut(G,\sds)} 
	L\Bigl\{\ang \otimes Y^{\otimes k}\Bigr\} 
	\ar@(d,l)[dr]_-{\beta} \ar'[r][rr] 
	&& L(A_k[r])  \ar[dr]^-{\xi_{k}} &
	\\
	& \coprod 
	\sigmabrr \dotover{\Aut(G,\sds)} 
	\Bigl\{\bng \otimes (LY)^{\otimes k}\Bigr\} 
	\ar[rr] 
	&& B_k([r])
	}
}
\end{equation}
The back face is $L$ applied to the pushout square \eqref{opg-hk-pushout} corresponding to the given pushout square \eqref{gprop-pushout-freecof}.  The front face is the  pushout square \eqref{opg-hk-pushout} corresponding to the pushout square \eqref{lga-freepushout}.  The objects $Q^k_{k-1}(i)$ and $Q^k_{k-1}(Li)$ refer to the $Q$-construction (Def. \ref{one-colored-q}) for the maps $i$ and $Li$, respectively.  By Proposition \ref{sigmacof-closure}, the left vertical maps in the back and the front faces are $\sigmabrr$-cofibrations in $\N$, and all the objects are $\sigmabrr$-cofibrant.  Thus, by the Cube Lemma \cite{hovey} (5.2.6), to show that $\xi_k$ is a weak equivalence, it is enough to show that the maps $\alpha$ and $\beta$ are weak equivalences.

The map $\alpha$ is a coproduct of the following compositions in $\Ntor$:
\[
\resizebox{\textwidth}{!}{
	\nicexy{
	\sigmabrr \dotover{\Aut(G,\sds)} 
	L\Bigl\{\ang \otimes Q^k_{k-1}(i)\Bigr\} 
	\ar[r]^-{\alpha_1}
	&
	\sigmabrr \dotover{\Aut(G,\sds)} 
	\Bigl\{\lang \otimes Q^k_{k-1}(Li)\Bigr\} 
	\ar[d]_-{\alpha_2}
	\\
	&\sigmabrr \dotover{\Aut(G,\sds)} 
	\Bigl\{\bng \otimes Q^k_{k-1}(Li)\Bigr\} 
	}
}
\]
The map $\alpha_1$ is induced by the lax comonoidal structure map of $L$ \cite{ss03} (3.4), while the map $\alpha_2$ is induced by the map $\chi_A$.  Since all three objects are $\sigmabrr$-cofibrant by Proposition \ref{sigmacof-closure}, to show that $\alpha$ is a weak equivalence, it is enough to show that $\alpha_1$ and $\alpha_2$ are weak equivalences.

Since $\chi_A$ is a weak equivalence, the same argument as in the paragraph containing \eqref{angq-bngq} shows that $\alpha_2$ is a weak equivalence.  For $\alpha_1$, first note that by Proposition \ref{sigmacof-closure} and the fact that $L$ is a left Quillen functor, the domain and the codomain of the map
\[
\nicexy{
L\Bigl\{\ang \otimes Q^k_{k-1}(i)\Bigr\} 
\ar[r]^{\alphabar_1} &
\lang \otimes Q^k_{k-1}(Li)
}\]
are $\Aut(G,\sds)$-cofibrant in $\N$.   Using the left Quillen functor \eqref{aut-to-sigma} and Ken Brown's Lemma \cite{hovey} (1.1.12), to show that $\alpha_1$ is a weak equivalence, it is enough to show that $\alphabar_1$ is a weak equivalence.  Since $(L,R)$ is a weak monoidal Quillen pair and all the objects in $\M$ and $\N$ are cofibrant, $\alphabar_1$ is a weak equivalence by \cite{muro14} (4.3).  This finishes the proof that $\alpha$ is a weak equivalence.  The proof that $\beta$ is a weak equivalence is the same as the one above for $\alpha$ but with $Y^{\otimes k}$ and $(LY)^{\otimes k}$ replacing $Q^k_{k-1}(i)$ and $Q^k_{k-1}(Li)$, respectively.  This finishes the induction step that $\xi_k$ is a weak equivalence.
\end{proof}


The following key observation is the generalized prop version of \cite{ss03} (5.1(1), for monoids) and \cite{muro14} (4.2, for $1$-colored non-symmetric operads).  Note that those two settings are both non-symmetric, while most shrinkable pasting schemes (Proposition \ref{shrinkable-pasting-schemes}) have graphs that encode symmetric group actions.  In particular, the category $\Ntos$ \eqref{m-tothe-s-again}, where the map $\chi_P$ lives, captures the underlying equivariant structure.

\begin{proposition}
\label{cof-chi-weq}
Suppose:
\begin{itemize}
\item
$L : \M \adjoint \N : R$ is a weak symmetric monoidal Quillen pair with left adjoint $L$.
\item
Both $\M$ and $\N$ are compatible with $\cG$.
\item 
$P$ is a cofibrant $\cG$-prop in $\M$.
\end{itemize}
Then the map $\chi_P : LP \to \Lg P$ is a weak equivalence in $\Ntos$.
\end{proposition}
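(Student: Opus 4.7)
The plan is to perform a transfinite induction on the cell attachment structure of $P$, using Propositions~\ref{initialprop-chi} and \ref{chi-weq} as the base case and the successor step respectively. Since $\gpropm$ is cofibrantly generated with generating set $\opg \comp \sI$ (Definition~\ref{gprop-model-operad}), every cofibrant $\cG$-prop $P$ is a retract of a $\lambda$-indexed transfinite composition
\[
\nicexy{
P_0 = \varnothing \ar[r]^-{j_1} & P_1 \ar[r]^-{j_2} & \cdots \ar[r] & P_\alpha \ar[r] & \cdots \ar[r] & \colim\limits_{\alpha < \lambda} P_\alpha = P_\infty
}
\]
in $\gpropm$, starting at the initial $\cG$-prop, in which each successor map $j_{\alpha+1} : P_\alpha \to P_{\alpha+1}$ is a pushout of a free map $\opg \comp i$ with $i \in \sI$ a generating cofibration of $\M$. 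Since weak equivalences are stable under retracts, it suffices to prove that $\chi_{P_\infty}$ is a weak equivalence, which I will establish by showing that $\chi_{P_\alpha}$ is a weak equivalence for all $\alpha \leq \lambda$ by transfinite induction on $\alpha$.

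The base case $\alpha = 0$ is exactly Proposition~\ref{initialprop-chi}. For the successor step, if $\chi_{P_\alpha}$ is a weak equivalence and $P_\alpha$ is cofibrant in $\gpropm$ (which holds by an auxiliary induction since cofibrations are closed under transfinite composition), then the pushout square defining $P_{\alpha+1}$ from $P_\alpha$ satisfies the hypotheses of Proposition~\ref{chi-weq}, so $\chi_{P_{\alpha+1}}$ is a weak equivalence.

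For a limit ordinal $\alpha \leq \lambda$, I first observe that both $L : \M^S \to \N^S$ and $\Lg : \gpropm \to \gpropn$ preserve all colimits as left adjoints, and the forgetful functors $\gpropm \to \M^S$ and $\gpropn \to \N^S$ preserve the transfinite composition at hand (they preserve filtered colimits by the remarks following Definition~\ref{gprop-model-operad}). Thus there are identifications
\[
L P_\alpha \cong \colim_{\beta < \alpha} L P_\beta \andspace \Lg P_\alpha \cong \colim_{\beta < \alpha} \Lg P_\beta
\]
in $\N^S$, under which $\chi_{P_\alpha}$ is the colimit of the maps $\chi_{P_\beta}$. By Proposition~\ref{sigmacof-closure}(5) applied iteratively in both $\M$ and $\N$, each transition map in both sequences is a $\sigmag$-cofibration between $\sigmag$-cofibrant objects (using Proposition~\ref{cof-is-sigmacof} and that $L,\Lg$ are left Quillen), so by \cite{hirschhorn} (15.10.12(1)) the colimit of the weak equivalences $\chi_{P_\beta}$ is again a weak equivalence, finishing the induction.

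The main obstacle I anticipate is the bookkeeping at the limit stage: one must verify carefully that the forgetful functor from $\cG$-props to $\M^S$ (not merely to $\M^{|S|}$) preserves the relevant transfinite compositions so that the colimit computations of $LP_\alpha$ and $\Lg P_\alpha$ in the underlying category agree with the colimits in the $\cG$-prop categories. This reduces to the fact, already noted in the text, that these forgetful functors create filtered colimits, combined with the observation that each $j_\beta$ is in particular a $\sigmag$-cofibration so that the cofibrancy hypotheses of Hirschhorn's lemma are met in $\N^S$.
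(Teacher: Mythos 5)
Your proposal is correct and follows essentially the same route as the paper's proof: write $P$ as a retract of a transfinite composition of pushouts of free generating cofibrations, use Proposition~\ref{initialprop-chi} for the base case, Proposition~\ref{chi-weq} for the successor step, and Proposition~\ref{sigmacof-closure} together with \cite{hirschhorn} (15.10.12(1)) to pass to the colimit. Your explicit treatment of intermediate limit ordinals is slightly more careful than the paper's (which applies the Hirschhorn lemma only once, to the full ladder), but the ingredients and structure are identical.
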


\begin{proof}
Exactly as in \eqref{cell-complex}, $P$ is a retract of the colimit $P_\infty$ of a transfinite composition
\[
\nicexy{
\varnothing = P_0 \ar[r] & P_1 \ar[r] & \cdots \ar[r] & \colim_k P_k = P_{\infty}
}\]
in $\gpropm$ starting at the initial $\cG$-prop $P_0 = \varnothing$, where each map $P_{k-1} \to P_k$ is a pushout as in \eqref{gprop-pushout-free-map} with $i : X \to Y$ a generating cofibration in $\M$.  By the naturality of $\chi_{(-)}$, it is enough to show that $\chi_{P_{\infty}}$ is a weak equivalence in $\Ntos$.

Applying $L$ and $\Lg$ to the transfinite composition, we obtain a commutative ladder diagram
\[
\nicexy{
LP_0 \ar[d]_-{\chi_{P_0}} \ar[r]
& LP_1 \ar[d]_-{\chi_{P_1}} \ar[r]
& \cdots \ar[r]
& \colim_k LP_k \cong LP_\infty \ar[d]_-{\chi_{P_\infty}}
\\
\Lg P_0 \ar[r]
& \Lg P_1 \ar[r]
& \cdots \ar[r]
& \colim_k \Lg P_k \cong \Lg P_\infty
}\]
in $\Ntos$.  The initial $\cG$-prop $P_0$ is $\sigmag$-cofibrant in $\M$, and $LP_0$ is $\sigmag$-cofibrant in $\N$.  Since $\Lg P_0$ is the initial $\cG$-prop in $\N$, it is likewise $\sigmag$-cofibrant.  By Propositions \ref{sigmacof-closure} and the fact that $L$ and $\Lg$ are left Quillen functors, all the horizontal maps in the ladder diagram are $\sigmag$-cofibrations.  So in particular all the objects are $\sigmag$-cofibrant.  Using \cite{hirschhorn} (15.10.12(1)) and the ladder diagram, to show that $\chi_{P_\infty}$ is a weak equivalence, it is enough to show by induction that each $\chi_{P_k}$ for $k \geq 0$ is a weak equivalence.   

Since $P_0$ is the initial $\cG$-prop, the map $\chi_{P_0}$ is a weak equivalence by Proposition \ref{initialprop-chi}.  For the induction step, suppose $\chi_{P_{k-1}} : LP_{k-1} \to \Lg P_{k-1}$ is a weak equivalence.  We must show that $\chi_{P_k} : LP_k \to \Lg P_k$ is a weak equivalence.  Note that all the $P_j$'s are cofibrant in $\gpropm$ because $P_0$ is cofibrant and each map $P_{j-1} \to P_j$ is a pushout of a generating cofibration, hence itself a cofibration.  Therefore, $\chi_{P_k}$ is a weak equivalence by Proposition \ref{chi-weq}.
\end{proof}

The following observation is our main result about derived change-of-base categories.  It is the $\cG$-prop version of \cite{ss03} (3.12(3), for monoids) and \cite{muro14} (1.1, for $1$-colored non-symmetric operads).

\begin{theorem}
\label{derived-basechange}
Suppose:
\begin{itemize}
\item
$L : \M \adjoint \N : R$ is a weak symmetric monoidal Quillen equivalence with left adjoint $L$.

\item
Both $\M$ and $\N$ are compatible with $\cG$.
\end{itemize}
Then there is a Quillen equivalence
\[
\nicexy{
\gpropm \ar@<2pt>[r]^-{\Lg}
& \gpropn \ar@<2pt>[l]^-{R}
}\]
with left adjoint $\Lg$.
\end{theorem}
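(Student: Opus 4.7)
The plan is to leverage the weak equivalence $\chi_P : LP \to \Lg P$ for cofibrant $P$ (Proposition~\ref{cof-chi-weq}) to reduce the statement to the Quillen equivalence already established on the underlying categories. By Proposition~\ref{induced-adjoints}, $(\Lg, R)$ is already a Quillen adjunction and the top horizontal adjunction $(L, R) : \M^S \rightleftarrows \N^S$ is a Quillen equivalence. So the only thing to check is that the derived unit and counit of $(\Lg, R)$ are weak equivalences, equivalently that for every cofibrant $P \in \gpropm$ and fibrant $Q \in \gpropn$, a map $f : P \to RQ$ in $\gpropm$ is a weak equivalence if and only if its adjoint $\tilde f : \Lg P \to Q$ in $\gpropn$ is.

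First I would unravel the adjoint relationships. Writing $U$ for the forgetful functors to $\M^S$ and $\N^S$, the standard formula $f = R \tilde f \circ \eta^{\cG}_P$ becomes, after applying $U$ and taking the adjoint under $(L,R)$ at the underlying level, the equation
\[
\widetilde{Uf} \;=\; U\tilde f \circ \chi_P \;:\; LUP \longrightarrow U\Lg P \longrightarrow UQ,
\]
since $\chi_P$ is by definition the $(L,R)$-adjoint of $U\eta^{\cG}_P$. Since weak equivalences and fibrations in both $\gpropm$ and $\gpropn$ are created entrywise (Definition~\ref{gprop-model-operad}), $f$ is a weak equivalence iff $Uf$ is one in $\M^S$, and similarly $\tilde f$ is a weak equivalence iff $U\tilde f$ is one in $\N^S$.

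Next, I would verify the hypotheses of the underlying Quillen equivalence. By Proposition~\ref{cof-is-sigmacof}, $P$ cofibrant in $\gpropm$ implies $UP$ cofibrant in $\M^S$, and fibrant $Q$ in $\gpropn$ implies $UQ$ fibrant in $\N^S$. Thus the Quillen equivalence $(L,R) : \M^S \rightleftarrows \N^S$ tells us that $Uf : UP \to RUQ = URQ$ is a weak equivalence if and only if its adjoint $\widetilde{Uf} : LUP \to UQ$ is. Combining with the displayed factorization above and Proposition~\ref{cof-chi-weq} (which guarantees $\chi_P$ is a weak equivalence when $P$ is cofibrant), the two-out-of-three axiom for weak equivalences in $\N^S$ gives that $\widetilde{Uf}$ is a weak equivalence if and only if $U\tilde f$ is.

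Chaining these equivalences yields
\[
f \text{ is a weak equivalence} \iff Uf \iff \widetilde{Uf} \iff U\tilde f \iff \tilde f \text{ is a weak equivalence},
\]
which is exactly the criterion for $(\Lg, R)$ to be a Quillen equivalence. The real content lies in Proposition~\ref{cof-chi-weq}; once that is in hand, the argument is a formal adjointness computation combined with the fact that weak equivalences are detected at the underlying level. The main subtlety to be careful about is the direction of the adjoint formula relating $\widetilde{Uf}$ to $U\tilde f$ through $\chi_P$, but this is precisely the functoriality built into the definition of $\chi$.
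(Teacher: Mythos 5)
Your proposal is correct and follows essentially the same route as the paper: reduce to the cofibrant/fibrant adjunction criterion, factor the underlying adjoint map through $\chi_P$, invoke Proposition~\ref{cof-chi-weq} together with two-out-of-three, and conclude via the entrywise Quillen equivalence $\Mtos \adjoint \Ntos$, using Proposition~\ref{cof-is-sigmacof} and the entrywise creation of fibrations exactly as the paper does. The only difference is cosmetic (you start from $f : P \to RQ$ rather than from $\varphi : \Lg P \to Q$), and your adjoint identity $\widetilde{Uf} = U\tilde f \circ \chi_P$ is the same factorization the paper writes as $\varphi\chi_A$.
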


\begin{proof}
This is a Quillen adjunction by Proposition \ref{induced-adjoints}.  To show that it is a Quillen equivalence, suppose  $\varphi : \Lg A \to B$ is a map with $A \in \gpropm$  cofibrant and $B \in \gpropn$ fibrant.  We must show that $\varphi$ is a weak equivalence if and only if its adjoint map $A \to RB$ is a weak equivalence.  Recall that weak equivalences are defined entrywise in the underlying categories.

Consider the composition
\[
\nicexy{
LA \ar[r]^-{\chi_A} & \Lg A \ar[r]^-{\varphi} & B}
\]
in $\Ntos$, where $\chi_A$ is the map defined in \eqref{transition-map}.  Since $A$ is cofibrant, the map $\chi_A$ is a weak equivalence by Proposition \ref{cof-chi-weq}.  Thus, $\varphi$ is a weak equivalence if and only if $\varphi\chi_A$ is a weak equivalence in $\Ntos$.

The $\cG$-prop $A$ is cofibrant in $\Mtos$ by Proposition \ref{cof-is-sigmacof}, and $B$ is fibrant in $\Ntos$ because fibrations in $\Ntos$ and $\gpropn$ are both defined entrywise in $\N$.  Since $(L,R)$ is a Quillen equivalence, the entrywise prolongations $\Mtos \adjoint \Ntos$ in \eqref{adjoint-diagram} form a Quillen equivalence.  So $\varphi\chi_A$ is a weak equivalence if and only if its adjoint $A \to RB \in \Ntos$ is a weak equivalence, which finishes the proof.
\end{proof}

\section{Obstructions to (relative) left properness}\label{appendix}

The restrictions we put on our model categories in Definition~\ref{def_especially_nice} seem rather mysterious at first glance. This final section is intended to illuminate as much as the authors understand about these conditions. When attempting to prove that a category of generalized props is left proper, one is forced to analyze of pushouts of graphs which are decorated with objects of the base category. When these graphs are then assembled to give a $\gprop$-structure, as in Lemma~\ref{opc-jt}, we are tensoring objects from the base category together in such a way that it respects graph structure. If we want to impose that the assembled prop satisfies some additional property this will be a complicated interplay between the objects of the model category $\M$ and the graph automorphisms in the chosen pasting scheme.

\begin{proposition}\label{prop_counter_example}
Let $\sset$ be the category of simplicial sets with the Kan model category structure and let $\sset^{\Sigma_{n}}$ be the category of simplicial sets with $\Sigma_n$ action, with the projective model structure. 
The map
\[
	\bigotimes \colon \sset^{\times n} \to \sset^{\Sigma_n}
\]
does not preserve cofibrant objects. 
\end{proposition}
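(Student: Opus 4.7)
The plan is to produce a tuple of cofibrant simplicial sets whose tensor product fails to be cofibrant in the projective model structure on $\sset^{\Sigma_n}$ (assuming $n \geq 2$, as the statement is vacuous otherwise). The obstruction is a freeness condition on the $\Sigma_n$-action that holds for every projective cofibrant object but is easily violated by a tensor product.

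The first key step is a description of the projective cofibrant objects of $\sset^{\Sigma_n}$. The projective model structure is cofibrantly generated by the maps
\[
\Sigma_n \times \partial \Delta^k \hookrightarrow \Sigma_n \times \Delta^k \qquad (k \geq 0),
\]
where $\Sigma_n$ acts on the first factor by left translation. Each of these generators has free $\Sigma_n$-action on its set of $\ell$-simplices for every $\ell \geq 0$. Free $\Sigma_n$-actions on each simplicial level are preserved by coproduct, pushout along an equivariant injection, transfinite composition, and retract, so by a standard small-object-argument bookkeeping every cofibrant object $Y \in \sset^{\Sigma_n}$ has the property that $\Sigma_n$ acts freely on $Y_\ell$ for every $\ell$.

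Given this characterization, I would take $X_1 = X_2 = \cdots = X_n = \Delta^0$. Each factor is cofibrant in $\sset$ (all simplicial sets are), so the tuple is cofibrant in the product model structure on $\sset^{\times n}$. Their tensor product in $\sset^{\Sigma_n}$ is the terminal simplicial set $\Delta^0$, on which $\Sigma_n$ must act by permuting identical copies and hence acts trivially. For $n \geq 2$, the unique $0$-simplex is a nontrivial fixed point, so the action is not free; by the previous paragraph, $\Delta^0$ with trivial $\Sigma_n$-action is not cofibrant in $\sset^{\Sigma_n}_{\mathrm{proj}}$. This exhibits a cofibrant tuple whose tensor product is not cofibrant, proving the proposition.

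The essential content is the freeness characterization of projective cofibrant objects, which is standard for diagrams over the one-object groupoid $B\Sigma_n$; the remainder is just arranging the counterexample. One could equally well use $X_i = \Delta^1$, noting that the diagonal simplices of $(\Delta^1)^{\otimes n}$ are fixed by $\Sigma_n$, but the choice $X_i = \Delta^0$ makes the failure of freeness immediate.
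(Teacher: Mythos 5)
Your proposal is correct, but it takes a different route from the paper. The paper fixes $n=2$, takes any nonempty $A$, and shows $A\times A$ is not cofibrant by a lifting argument: it exhibits the acyclic fibration $S_\bullet(S^\infty)\to *$ in $\sset^{\Sigma_2}$, where $\Sigma_2$ acts freely on $S_\bullet(S^\infty)$ (so the fixed-point set is empty), and observes that an equivariant lift $A\times A\to S_\bullet(S^\infty)$ would have to send the $\Sigma_2$-fixed diagonal $\Delta A\cong A\neq\varnothing$ into the empty fixed-point set. You instead invoke the full characterization of projective cofibrant objects of $\sset^{\Sigma_n}$ as those with levelwise free action (proved by cell induction from the generators $\Sigma_n\times\partial\Delta^k\to\Sigma_n\times\Delta^k$, using that monomorphisms make each stage a levelwise disjoint union and that sub-$\Sigma_n$-sets and retracts of free $\Sigma_n$-sets are free), and then note that $\Delta^{0}\otimes\cdots\otimes\Delta^{0}=\Delta^0$ carries the trivial, hence non-free, action. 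Both arguments turn on the same essential obstruction --- the tensor power of a single object has a fixed diagonal, while cofibrant objects must be ``free'' --- but yours packages it as a general structural fact about cofibrant objects and works uniformly in $n$, whereas the paper's is a self-contained lifting obstruction against one concrete fibrant replacement of the point that avoids proving the freeness characterization. Your cell-induction sketch is slightly compressed (one should say ``pushout of a generating cofibration'' rather than ``pushout along an equivariant injection,'' and note that retracts of levelwise free objects are levelwise free because subobjects of free $\Sigma_n$-sets are free), but these are standard points and the argument is sound.
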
 

To see this, let $A$ be any nonempty simplicial set.  We know that $A$ is cofibrant in $\sset$ and that the object $(A, A)$ is cofibrant in $\sset^{\times 2}$. We would like to thank Sean Tilson for showing us a proof of the following lemma.

\begin{lemma}
$A\times A$ is not cofibrant as an object of $\sset^{\Sigma_2}$. 
\end{lemma}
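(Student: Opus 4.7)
The plan is to exhibit an acyclic fibration in $\sset^{\Sigma_2}$ for which the required lift from $A \times A$ cannot possibly exist, using that the swap action has fixed points.

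First, I will fix a concrete free, contractible Kan model $E\Sigma_2$ (for instance $W\Sigma_2$, or the nerve of the translation groupoid of $\Sigma_2$). The key observation is that $E\Sigma_2 \to *$ is an acyclic Kan fibration between underlying simplicial sets, so by the definition of the projective model structure on $\sset^{\Sigma_2}$---where fibrations and weak equivalences are created by the forgetful functor to $\sset$---the unique equivariant map
\[
	p \colon E\Sigma_2 \longrightarrow *
\]
is an acyclic fibration in $\sset^{\Sigma_2}$.

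Next, I will argue by contradiction: suppose $A \times A$ were cofibrant in $\sset^{\Sigma_2}$. Applying the lifting axiom to the diagram with $\emptyset \to A \times A$ on the left and $p$ on the right would produce a $\Sigma_2$-equivariant map $\widetilde{f} \colon A \times A \to E\Sigma_2$. Any equivariant map sends fixed simplices to fixed simplices, since $\sigma \cdot x = x$ forces $\sigma \cdot \widetilde{f}(x) = \widetilde{f}(x)$.

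The contradiction will then be immediate: because $A$ is nonempty, it has at least one vertex $a \in A_0$, and the diagonal vertex $(a,a) \in (A \times A)_0$ is fixed by the swap. Its image $\widetilde{f}(a,a)$ would have to be a $\Sigma_2$-fixed vertex of $E\Sigma_2$, but we chose $E\Sigma_2$ with free $\Sigma_2$-action, so $(E\Sigma_2)_0$ contains no fixed vertices. The main (mild) obstacle is simply recording the standard fact that the projective model structure on $\sset^{\Sigma_2}$ detects fibrations and weak equivalences on underlying objects, so that $E\Sigma_2 \to *$ really is an acyclic fibration there; everything else is formal.
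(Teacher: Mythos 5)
Your proof is correct and follows essentially the same strategy as the paper's: lift the map $A\times A \to *$ against an acyclic fibration from a free contractible $\Sigma_2$-Kan complex, and derive a contradiction from the fixed point $(a,a)$ on the diagonal. The only difference is cosmetic — you use a combinatorial model $E\Sigma_2$ where the paper uses the singular complex of $S^\infty$ with the antipodal action; both are free, contractible, fixed-point-free Kan complexes, so the arguments coincide.
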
 

\begin{proof} 
Let $\bigcup_{n\geq0} S^{n} = S^{\infty}\subseteq \mathbb{R}^{\infty}$ be the infinite sphere and let 
\[
	X = S_\bullet (S^\infty) \subset S_{\bullet}(\mathbb R^\infty)
\] 
be the singular $\Sigma_2$ simplicial set where the $\Sigma_2$ action is given by multiplication by $-1$. Note that $X$ has no fixed points and so $X^{\Sigma_2} = \varnothing$. Now, there exists a unique diagram
\[ \begin{tikzcd}
\varnothing \dar \rar & X \dar \\
A \times A \rar & *.
\end{tikzcd} \]
where all maps are $\Sigma_2$-equivariant maps. The map on the right is an acyclic fibration in $\sset$.
But there cannot be a $\Sigma_2$-equivariant lift $q: A\times A \to X$ in this diagram, since if there were there would be a factorization
\[\includegraphics{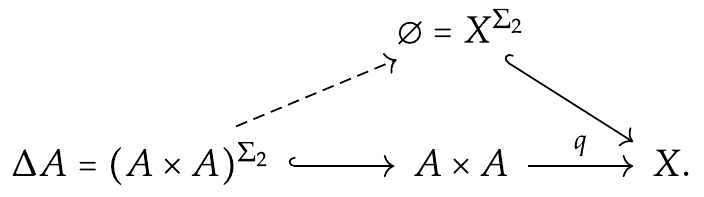}\]
Such a factorization does not exist since $\Delta A \cong A \neq \varnothing$. Hence $\varnothing \to A \times A$ is not a cofibration in $\sset^{\Sigma_2}$. 
\end{proof}

Notice that if replace $\sset$ with $\textnormal{Ch}({\mathbb Q})$ the map
\[
	\bigotimes \colon \textnormal{Ch}({\mathbb Q})^{\times n} \to \textnormal{Ch}({\mathbb Q})^{\Sigma_n}
\]
preserves cofibrant objects. This is because every module over $\mathbb{Q}[\Sigma_n]$ is projective, so every object in $\textnormal{Ch}({\mathbb Q})^{\Sigma_n}$ is cofibrant.

The following example shows obstructions to compatibility for wheeled properads in $\sset$.

\begin{example} 

Let $\cG=\gwheelc (\fC)$ be the pasting scheme (Definition~\ref{def:shrinkable}) of connected wheeled graphs (for wheeled properads), and consider the legless graph $G$ which has two normal vertices and one distinguished vertex.

\[
\begin{tikzpicture} [scale=.5]

\node[shape=circle, draw=white](q) at (-5,0){$G=$};
\node[shape=circle,draw=black] (d) at (0,0) {$d$};
\node[shape=circle, draw=black](n) at (-3,2) {$n_1$};
\node[shape=circle, draw=black](y) at (3,2) {$n_2$};

\draw[->](d) to  (n);
\draw[->] (d) to (y);

\end{tikzpicture}
\]

In this case, if we decorate $G$ by an object $A\in\M^{S}$ (Definition~\ref{def_ang}), we will end up with a reduced wheeled graph in which $\Aut(G,\sds)$ permutes the vertices $n_1$ and $n_2$, but, as we showed in Propostion~\ref{prop_counter_example}, the tensor product $A(n_1)\otimes A(n_2)$ will not necessarily be cofibrant. 

\end{example}



\begin{thebibliography}{AAAAAA}

\bibitem[BB14]{bb14} 
{\scshape Batanin, M.A.; Berger, C.} Homotopy theory for algebras over polynomial monads. Preprint, \href{https://arxiv.org/abs/1305.0086v6}{arXiv:1305.0086v6} [math.CT]. 
 



\bibitem[BM06]{bm06}
{\scshape Berger, C.; Moerdijk, I.} The Boardman-Vogt resolution of operads in monoidal model categories. {\em Topology} {\bf 45} (2006) 807--849. \mrev{2248514} (2008e:18016), \zbl{1105.18007}.


\bibitem[BM07]{bm07}
{\scshape Berger, C.; Moerdijk, I.} Resolution of coloured operads and rectification of homotopy algebras. {\em Contemp. Math.} {\bf 431} (2007) 31--58. \mrev{2342815} (2008k:18008), \zbl{1134.18005}.






\bibitem[Bor94]{borceux}
{\scshape Borceux, F.} Handbook of Categorical Algebra 2:  Categories and Structures. {\em Cambridge Univ. Press, Cambridge, UK,} 1994. \mrev{1313497} (96g:18001b), \zbl{0843.18001}.









\bibitem[DK80]{dk} 
{\scshape Dwyer, W.G.; Kan D.M.} Simplicial localizations of categories. {\em J. Pure Appl. Algebra} {\bf 17}(3) (1980) 267--284. \mrev{0579087} (81h:55018), \zbl{0485.18012}.




\bibitem[EM06]{em06}
{\scshape Elmendorf, A.D.; Mandell, M.A.} Rings, modules, and algebras in infinite loop space theory.  {\em Adv. Math.} {\bf 205}(1) (2006) 163-228. \mrev{2254311} (2007g:19001), \zbl{1117.19001}.




\bibitem[Fre09]{fresse_book} 
{\scshape Fresse, B.} Modules over operads and functors.  Lect. Notes in Math., 1967. {\em Springer-Verlag, Berlin,} 2009. \mrev{2494775} (2010e:18009), \zbl{1178.18007}.

\bibitem[Fre10]{fresse}
{\scshape Fresse, B.} Props in model categories and homotopy invariance of structures. {\em Georgian Math. J.} {\bf 17} (2010), 79--160. \mrev{2640648} (2011h:18011), \zbl{1227.18007}.

\bibitem[Gan04]{gan}
{\scshape Gan, W.L.} Koszul duality for dioperads. {\em Math. Res. Lett.} {\bf 10} (2003), 109-124. \mrev{1960128} (2004a:18005), \zbl{1103.18010}.





\bibitem[HR]{hr2}
{\scshape Hackney, P.; Robertson, M.} The homotopy theory of simplicial props. To appear in Israel Journal of Mathematics, \href{https://arxiv.org/abs/1209.1087}{arXiv:1209.1087} [math.AT]. 



\bibitem[HRY16]{hryoperads}
{\scshape Hackney, P.; Robertson, M.; Yau, D.} Relative left properness of colored operads. {\em Algebr. Geom. Topol.} {\bf 16} (2016), 2691--2714. \mrev{3572345}, \zbl{1350.18016}.

\bibitem[Har10]{harper-jpaa}
{\scshape Harper, J.E.} Homotopy theory of modules over operads and non-$\Sigma$ operads in monoidal model categories. {\em J. Pure Appl. Algebra} {\bf 214} (2010), 1407--1434. \mrev{2593672} (2011g:55024), \zbl{1231.55011}.





\bibitem[Hir03]{hirschhorn}
{\scshape Hirschhorn, P.S.} Model categories and their localizations. Math. Surveys and Monographs 99. {\em Amer. Math. Soc. Providence, RI,} 2003. \mrev{1944041} (2003j:18018), \zbl{1017.55001}.


\bibitem[Hov99]{hovey}
{\scshape Hovey, M.} Model categories. Math. Surveys and Monographs 63. {\em Amer. Math. Soc. Providence, RI,} 1999. \mrev{1650134} (99h:55031), \zbl{0909.55001}.


\bibitem[JY09]{jy1}
{\scshape Johnson, M.W.; Yau, D.} On homotopy invariance for algebras over colored PROPs. {\em J. Homotopy and Related Structures} {\bf 4} (2009), 275--315. \mrev{2559644} (2010j:18014), \zbl{1188.18007}.






\bibitem[Lan02]{lang} {\scshape Lang, S.} Algebra. Revised third edition. Grad. Texts in Math., 211. {\em Springer-Verlag, New York,} 2002. \mrev{1878556} (2003e:00003), \zbl{0984.00001}.


\bibitem[Lur09]{lurie}
{\scshape Lurie, J.} Higher Topos Theory. Annals of Math. Studies, 170. {\em Princeton Univ. Press, Princeton, NJ,} 2009. \mrev{2522659} (2010j:18001), \zbl{1175.18001}.


\bibitem[Mac98]{maclane}
{\scshape Mac Lane, S.} Categories for the working mathematician. Second edition. Grad. Texts in Math., 5. {\em Springer-Verlag, New York,} 1998. \mrev{1712872} (2001j:18001), \zbl{0906.18001}.


\bibitem[MMS09]{mms}
{\scshape Markl, M.; Merkulov, S.A.; Shadrin, S.} Wheeled PROPs, graph complexes and the master equation. {\em J. Pure Appl. Alg.} {\bf 213} (2009) 496-535. \mrev{2483835} (2010e:18010), \zbl{1175.18002}.


\bibitem[MSS02]{mss}
{\scshape Markl, M.; Shnider, S.; Stasheff, J.} Operads in Algebra, Topology and Physics. Math. Surveys and Monographs, 96. {\em Amer. Math. Soc., Providence, RI,} 2002. \mrev{1898414} (2003f:18011), \zbl{1017.18001}.





\bibitem[Mer10]{merkulov3}
{\scshape Merkulov, S.A.} Wheeled props in algebra, geometry and quantization. {\em European Congress of Math.,} 83--114. {\em Eur. Math. Soc., Z\"{u}rich,} 2010. \mrev{2648322} (2011i:18018), \zbl{1207.18010}.

\bibitem[Mur11]{muro11}
{\scshape Muro, F.} Homotopy theory of nonsymmetric operads. {\em Alg. Geom. Top.} {\bf 11} (2011) 1541--1599. \mrev{2821434} (2012k:18015), \zbl{1228.18006}.


\bibitem[Mur14]{muro14}
{\scshape Muro, F.} Homotopy theory of non-symmetric operads, II: Change of base category and left properness. {\em Alg. Geom. Top.} {\bf 14} (2014) 229--281; corrections, \href{https://arxiv.org/abs/1507.06644}{arXiv:1507.06644}. \mrev{3158759}, \zbl{1281.18001}.



\bibitem[Qui69]{quillen}
{\scshape Quillen, D.} Rational homotopy theory. {\em Ann. of Math. (2)} {\bf 90} (1969) 205--295. \mrev{0258031} (41 \#2678), \zbl{0191.53702}.

\bibitem[Rez02]{rezkSA} 
{\scshape Rezk, C.} Every homotopy theory of simplicial algebras admits a proper model. {\em Topology Appl.} {\bf 119} (2002) no. 1, 65--94. \mrev{1881711} (2003g:55033), \zbl{0994.18008}.

\bibitem[Rez96]{rezk}
{\scshape Rezk, C.} Spaces of algebra structures and cohomology of operads. Ph.D. thesis, MIT, 1996.




\bibitem[SS00]{ss}
{\scshape Schwede, S.; Shipley, B.} Algebras and modules in monoidal model categories. {\em Proc. London Math. Soc.} {\bf 80} (2000) 491--511. \mrev{1734325} (2001c:18006), \zbl{1026.18004}.


\bibitem[SS03]{ss03}
{\scshape Schwede, S.; Shipley, B.} Equivalences of monoidal model categories. {\em Alg. Geom. Top.} {\bf 3} (2003) 287--334. \mrev{1997322} (2004i:55026), \zbl{1028.55013}.


\bibitem[Spi01]{spitzweck-thesis}
{\scshape Spitzweck, M.}  Operads, algebras and modules in general model categories. Preprint, 2001, \href{http://arxiv.org/abs/math/0101102}{arXiv:math/0101102} [math.AT].


\bibitem[Wei94]{weibel}
{\scshape Weibel, C.A.} An Introduction to Homological Algebra. Cambridge Studies in Advanced Mathematics 38. {\em Cambridge University Press, Cambridge, MA,} 1994. \mrev{1269324} (95f:18001), \zbl{0797.18001}.

\bibitem[WY15]{white-yau}
{\scshape White, D.; Yau, D.} Bousfield localization and algebras over colored operads. Preprint, 2015, \href{https://arxiv.org/abs/1503.06720}{arXiv:1503.06720} [math.AT]. 


\bibitem[Yau16]{yau-operad}
{\scshape Yau, D.} Colored Operads. Graduate Studies in Math., 170. {\em Amer. Math. Soc., Providence, RI}, 2016. \mrev{3444662}, \zbl{1348.18014}.


\bibitem[YJ15]{jy2}
{\scshape Yau, D.; Johnson, M.W.} A Foundation for PROPs, Algebras, and Modules. Math. Surveys and Monographs 203. {\em Amer. Math. Soc., Providence, RI}, 2015. \mrev{3329226}, \zbl{1328.18014}.


\end{thebibliography}
\end{document}